\documentclass[a4paper,12pt,draft,reqno]{amsart}

\usepackage{amssymb, amsfonts, amscd,amsfonts, amsmath, amsthm}
\usepackage{enumerate}
\usepackage{mathrsfs}

\usepackage{graphics}
\usepackage{dsfont}
\usepackage{mathrsfs}

\usepackage[all]{xy}

\DeclareMathSymbol{\twoheadrightarrow}  {\mathrel}{AMSa}{"10}

\topmargin=10mm
\textheight=230mm
\textwidth=150mm
\oddsidemargin=8mm \evensidemargin=8mm


{\catcode`\@=11
\gdef\n@te#1#2{\leavevmode\vadjust{%
 {\setbox\z@\hbox to\z@{\strut#1}%
  \setbox\z@\hbox{\raise\dp\strutbox\box\z@}\ht\z@=\z@\dp\z@=\z@%
  #2\box\z@}}}
\gdef\leftnote#1{\n@te{\hss#1\quad}{}}
\gdef\rightnote#1{\n@te{\quad\kern-\leftskip#1\hss}{\moveright\hsize}}
\gdef\?{\FN@\qumark}
\gdef\qumark{\ifx\next"\DN@"##1"{\leftnote{\rm##1}}\else
 \DN@{\leftnote{\rm??}}\fi{\rm??}\next@}}


\DeclareOption{loadcyr}{\cyr@true}

\DeclareFontFamily{OT1}{wncyr}{\hyphenchar\font45 }
\DeclareFontShape{OT1}{wncyr}{m}{n}{%
   <5> <6> <7> <8> <9> gen * wncyr
   <10> <10.95> <12> <14.4> <17.28> <20.74>  <24.88>wncyr10}{}
\DeclareFontShape{OT1}{wncyr}{m}{it}{%
   <5> <6> <7> <8> <9> gen * wncyi
   <10> <10.95> <12> <14.4> <17.28> <20.74> <24.88> wncyi10}{}
\DeclareFontShape{OT1}{wncyr}{m}{sc}{%
   <5> <6> <7> <8> <9> <10> <10.95> <12> <14.4>
   <17.28> <20.74> <24.88>wncysc10}{}
\DeclareFontShape{OT1}{wncyr}{b}{n}{%
   <5> <6> <7> <8> <9> gen * wncyb
   <10> <10.95> <12> <14.4> <17.28> <20.74> <24.88>wncyb10}{}
\input cyracc.def

\DeclareMathSizes{9}{9}{7}{5} 


\DeclareMathSymbol{\twoheadrightarrow} {\mathrel}{AMSa}{"10}


\theoremstyle{plain}

\newtheorem{theorem}{Theorem}[section]

\newtheorem{lemma}[theorem]{Lemma}
\newtheorem{remark}[theorem]{Remark}

\newtheorem{corollary}[theorem]{Corollary}

\newtheorem{example}[theorem]{Example}

\theoremstyle{definition}

\newtheorem{definition}[theorem]{Definition}

\newtheorem{nothing*}[theorem]{}
\newtheorem{subnothing*}[sub]{}

\newtheorem*{examples}{Examples}

\theoremstyle{remark}


\def\Aut {{\rm Aut\,}}
\def\Autn{{\rm Aut}(\mathbf A\!^{n})}

\def\An{{\mathbf A}\!^n}
\def\GL{{\rm GL}}

\def\Cr{{\rm Cr}}
\def\Aff{{\rm Aff}}
\def\Am{{\mathbf A}\!^m}

\def\Cn{{\rm Cr}_n}

\def\Cl{{\rm Cr}_1}
\def\Cp{{\rm Cr}_2}
\def\C3{{\rm Cr}_3}

\def\bAn{{\mathbf A}\!^{n}}
\def\bAm{{\mathbf A}\!^{m}}
\def\bA2{{\mathbf A}\!^2}
\def\bAl{{\mathbf A}\!^1}

\newcommand{\dss}{\hskip -2mm\rotatebox{68}{\raisebox{-1.8\height}{\mbox{\normalsize -\hskip .1mm-\hskip .1mm-}}}\hskip -.6mm}


\numberwithin{equation}{section}

\setcounter{secnumdepth}{2}

\begin{document}



\title[Three plots about the Cremona groups]{Three plots about the Cremona groups}

\author[Vladimir  L. Popov]{Vladimir  L. Popov}
\address{Steklov Mathematical Institute,
Russian Academy of Sciences, Gubkina 8, Mos\-cow\\
119991, Russia
}


\email{popovvl@mi-ras.ru}




\dedicatory{To the memory of V.\;A.\;Iskovskikh}

\maketitle



\begin{abstract}
The first group of results of this paper concerns the compressibility of finite subgroups of the Cremona groups. The second concerns the embeddability of other groups in the Cremona groups and, conversely, the Cremona groups in other groups. The third concerns the connectedness of the Cremona groups.
\end{abstract}

\section{Introduction}

\subsection{}
The Cremona group $\Cn(k)$ of rank $n$
over the field $k$ is the group of $k$-automorphisms of the
field $k(x_1, \ldots, x_n)$ of rational functions over $k$
in the variables $x_1, \ldots, x_n $.
It admits a geometric
interpretation: if the field $k (x_1, \ldots, x_n)$
is identified by means of a $k$-isomorphism
 with the field $k(X)$ of an irreducible algebraic
 variety $X$ defined and rational over $k$, then each element
$\sigma$ of the group ${\rm Bir}_k(X)$
of all $k$-birational self-maps $X\dashrightarrow X $
determines the element $\sigma^* \in \Cn(k)$,
\begin{equation} \label{ast}
\sigma^*(f): = f \circ \sigma, \; \; f \in k(X),
\end {equation}
and the mapping ${\rm Bir}_k (X) \to \Cn(k)$,
$\sigma \mapsto (\sigma^{- 1})^*$, is an isomorphism of groups.
For this reason, the group ${\rm Bir}_k(X) $ is
called the Cremona group as well and denoted by $\Cn(k)$.\;Which interpretation of $\Cn(k)$ is meant\,---\,algebraic
or geometric\,---\, is usually
clear from context.\;The naturally defined
concept of a morphism of an al\-geb\-raic variety
into the Cremona group (or the concept of an algebraic
family of elements of the Cremona group) allows one to endow it
with the Zariski topology  \cite[1.6]{Se10}. Besides
this property, there is a number of others which permit to
speak about the far-reaching analogies between the
Cremona groups and affine algebraic
groups, see \cite{Po13_1}, \cite{Po13_2},
\cite {Po14_2},  \cite {Po17}.

The Cremona groups
are classical objects of research,
intensity of which in recent years
increased significantly and led to essential advances
in understanding the structure of these groups.\;Among the most impressive is {\it tour de force}
\cite{DI09} by
I.\;V. Dolgachev and V.\;A.\;Iskovskikh
on the classification of finite subgroups of $\Cp(\mathbb C)$.

\subsection{} In this paper,
three aspects of the structure of the Cremona groups are explored.

 The topic of the (longest) Section \ref{compr}
is the comparison of different
finite subgroups of the Cremona group $\Cn(k)$,
where $k$ is an algebraically closed
field of characteristic zero.\;So far, in the studies of these subgroups,
including that in \cite{DI09}, they were all considered
on an equal footing. However, in reality  it is necessary to consider
some of them as ``not basic'',
since they are obtained from others by a standard
``base change'' construction \cite[3.4]{Po14_1}.\;This leads to the problem, formulated in \cite [3.4]{Po14_1}, \cite[Quest.\;1]{Po16},
of finding those subgroups in the classification
lists that are obtained by such a nontrivial
change, or, in another terminology,
are nontrivialy ``compressible'' (see definitions in Subsection \ref{deff}).

Developing this topic, in Section \ref{compr} we prove a series of statements about such subgroups. Some of them are of a general nature, while some concern
cases $n=1$ and $2$. For example, we obtain the following result (Theorem \ref{Ps}), which immediately implies the nontrivial self-compressibility of any finite subgroup $G$
in ${\rm Cr}_1$:\;for the cor\-responding
binary group $\widetilde G$ of linear transformations of the affine plane,
 we find an infinite increasing sequence of integers $d>0$ such that
 $\widetilde G$ admits a homogeneous polynomial self-compression of degree $d$,
 which descends to a nontrivial self-comp\-res\-sion of the group $G$. The proof
 allows us in principle to specify these self-compressions by explicit formulas.\;For $n=2$, we prove, for example, that if $G$  is a non-Abelian finite subgroup
of ${\rm GL}_2 (k) \subset
\Cp (k)$ that is not isomorphic to a dihedral group, then every
finite subgroup in $\Cp(k) $,
isomorphic to $G$ as an abstract group, is obtained from $G$ by a nontrivial base change
(Theorem \ref{nal}).\;Other statements on this subject, proved in Section \ref{compr}, see below in Theorems \ref{coli}--\ref{nal} and their Corollaries.

\subsection{} The subject of  Section \ref {2} is the embeddability of other groups in the Cremona groups and, conversely,  the embeddability of the Cremona groups in other groups. This theme originates from the question of J.-P. Serre \cite[\S6, 6.0] {Se09} on the existence of finite groups that are nonembeddable in ${\rm Cr}_3({\mathbf C})$.\;By now (September 2018)
 signi\-fi\-cant information is accumulated on it (including the affirmative answer to this question). The most essential contribution to its obtaining is related to the Jordan property (see Definition \ref{J} below) of the Cremona groups ${\rm Cr}_n (k) $, whose
 proof for any $n$ has been completed recently\footnote{ In \cite[Thm.\;1.8]{PS16}, it was given the conditional (modulo the so-called BAB conjecture) proof of the Jordan property of the group ${\rm Bir}_k (X)$ for any rationally connected algebraic $k$-variety $X$ in the case of ${\rm char}\,k = 0$ (and therefore, the conditional proof of the Jordan property of any Cremona group $\Cn(k)$).\;The BAB conjecture was then proved in
\cite[Thm.\;3.7]{Bi17}. This completed the proof of the Jordan property of the groups ${\rm Bir}_k(X) $.}\label {foot}.\;Although the statements about the group embeddings proved in Section \ref {2}
are also related to the Jordan property, which is
in the focus of attention already for a long time, in the published literature they did not occur to the author.

The fact that, for ${\rm char}\,k = 0$, every finite $p$-subgroup of $\Cn(k)$ is Abelian for sufficiently big $p$, immediately follows from the Jordan property of the Cremona groups (this was noted already in \cite [\S6, 6.1]{Se09}).\;Therefore,  every non-Abelian finite $p$-group (such exist for any $p$) is nonembeddable in $\Cn(k)$  for sufficiently big $p$.\;We prove
(Corollary \ref{sl}), for any Cremona group ${\rm Cr}_n(k)$ with ${\rm char}\,k=0$, the existence of an integer  $b_{n, k}>0$ such that every product of groups $G_1\times\cdots\times G_s$, each of which contains a non-Abelian finite subgroup, is nonembeddable in the group ${\rm Cr}_n(k)$ if $s>b_{n, k}$.
In particular,
for any (and not only for sufficiently big) prime integer $p$, there exists a non-Abelian finite $p$-group that is nonembeddable in ${\rm Cr}_n(k)$.

Considering $p$-subgroups delivers invariants, which
allow us to prove in some cases that one group is nonembeddable in another.\;Some applications are obtained on this way.

For example (special case of Corollary \ref{c1}),
we prove
that if
$k$ is an algebraically closed field of characteristic zero, and with each integer $d>0$
any abstract group $H_d $ from
 the following list is associated:

 \begin{enumerate} [\hskip 4.2mm \rm (a)]
 \item ${\rm Cr}_d(k)$,
 \item ${\rm Aut} ({\bf A}_k^{\hskip -.6mm d})$,
 \item a connected affine algebraic group over $k$ with maximal tori
 of dimension $d$,
\item a connected real Lie group with maximal tori
 of dimension $d$,
 \end{enumerate}
  then the group $H_n$ is nonembeddable in $H_m$ if $n>m$. In particular,
  the groups $H_n$ and $H_m$ for $n\neq m$ are not isomorphic.
For instance,
${\rm Cr}_n(k)$ is embeddable in  ${\rm Cr}_m(k)$
if and only if  $n\leqslant m$; in particular, ${\rm Cr}_n$ and
${\rm Cr}_m$ are isomorphic if and only if  $n=m$ (this was previously proven in \cite[Thm.\;B]{Ca14}, \cite[Rem.\;1.11]{PS16}).

Another example (Theorem
\ref{tp111}): we prove that if $M$ is a compact connected $n$-dimensional topological manifold, and $B_M $ is the sum of its Betti numbers with respect to homology with coefficients in $\mathbf Z $, then for
$$d>\frac{\sqrt{n^2+4n(n+1)B_{M}}+n}{2}+{\rm log}_2B_{M},$$
the Cremona group $ {\rm Cr}_d (k) $ is nonembeddable in the homeomorphism group
 of the manifold $M$.

Concerning other statements on nonembeddable groups proved in Section \ref{2}, see  below Lemma \ref{Jsss}, Theorems \ref{Rd}, \ref{al}, \ref {t15} and their Corollaries.

\subsection {}
In Section \ref{1}, we return back to the question of J.-P. Serre
on the connectedness of the Cremona group $\Cn(k) $ in the Zariski topology
\cite[1.6]{Se10}.\;It was answered in the affirmative in \cite{BZ18}, where the linear connectedness (and therefore the connectedness) of the group $\Cn(k)$ is proved in the case of an infinite field $k$ (for an algebraically closed field $k$, this was proved earlier
in \cite{Bl10}).\;We give a short new proof for the case of an infinite field $k$.\;It is based on an argument, ideologically close to that of
Alexander, which he used in \cite{Al23} in proving the connectedness
of the homeomorphism group of the ball, and which was then adapted in \cite[Lem.\;4]{Sh82},
\cite [Thm.\,6] {Po14_2}, and \cite{Po17} to the proofs of connectedness of
the groups ${\rm Aut}(\bAn)$ and their affine-triangular subgroups, respectively.

\vskip 2mm

The author is grateful to J.-P.\;Serre, Ch.\;Urech,  and the referee for the comments.

\subsection{Notations and conventions}\

\vskip 1mm

$\overline k$ is a fixed algebraically closed field containing $k$.

$\Cn:=\Cn(\overline k)$, ${\rm Bir}(X):={\rm Bir}_{\overline k}(X)$, ${\rm Aut}(X):={\rm Aut}_{\overline k}(X)$.

$o=(0,\ldots, 0)\in\bAn$.

$\langle S\rangle$ is a linear span of a subset $S$ of a linear space over\;$k$.

${\rm Grass}(n, V)$ is the Grassmannian of all
$n$-dimensional linear subspaces of a finite-dimensional linear space $V$ over $k$.

${\mathbf P}(V):={\rm Grass}(1, V)$. We put ${\mathbf P}(\{0\})=\varnothing$ and $\dim (\varnothing)=-1$.

$L^{\oplus m}$ is the direct sum of
$m$ copies of a linear space $V$ over $k$
(for $m=0$, it is considered to be zero).

$G^{s}$ is the direct product of  $s$ copies of a group $G$.

``Variety'' means ``algebraic variety over $k$''.\;Its irreducibilitty
means geometric irreducibility, and points mean
$\overline k$-points.
The set of $k$-points of a variety $X$ is denoted by $X(k)$.

${\rm Dom}(\varphi)$ is the domain of definition of a rational map $\varphi$.

${\rm T}_{a, X}$ is the tangent space of a variety $X$ at a point $a$.

$d_a\varphi\colon {\rm T}_{a, X}\to {\rm T}_{\varphi(a), X}$ is the differential of a rational map $\varphi\colon X\dashrightarrow X$ at a point $a\in {\rm Dom}(\varphi)$.

$k[x_1,\ldots, x_n]_d$ is the space of all forms of degree $d$ in variables $x_1,\ldots, x_n$ and with coefficients in $k$.

$L_d\!:=\!L\cap k[x_1,\ldots, x_n]_d$ for any $k$-linear subspace $L$ in $k[x_1,\ldots, x_n]$.

$FH=\{fh\mid f\in F, h\in H\}$ for any nonempty sets $F, H\subseteq  k[x_1,\ldots, x_n]$.

The variables
$x_1,\ldots, x_n$ in the definition of the Cremona group are assumed to be the standard coordinate functions on $\bAn$:
\begin{equation*}
x_i(a):=a_i,\quad a:=(a_1,\ldots, a_n)\in \bAn.
\end{equation*}

For any rational map
$\sigma\colon \bAn\dashrightarrow\bAn$, we use the notation
\begin{equation}\label{string}
\sigma=(\sigma_1,\ldots, \sigma_n)\colon \bAn\dashrightarrow\bAn,\quad\mbox{where $\sigma_i:=\sigma^*(x_i)$.}
\end{equation}
We call $\sigma$ {\it polynomial homogeneous map of degree} $d$,
if $\sigma_1,\ldots, \sigma_n\in k[x_1,\ldots, x_n]_d$.

In these notation, if for a rational map
$\tau\colon \bAn\dashrightarrow \bAn$ the composition $\nu:=\sigma\circ\tau$ is defined,
then it is described by the formula
\begin{equation}\label{composi}
\nu_i=
\tau^*(\sigma_i)\;\;\mbox{for all $i$},
\end{equation}
i.e.,\;$\nu_i$ is obtained from the rational function $\sigma_i$ in $x_1,\ldots, x_n$
by means of plugging in $\tau
_j$ in place of $x_j$ for every  $j$.

The map \eqref{string}  is called {\it affine} (respectively, {\it linear}),
if all nonzero functions $\sigma_i$ are polynomial in $x_1,\ldots, x_n$ of degree $\leqslant 1$ (respectively, are forms in $x_1,\ldots, x_n$ of degree 1). The set of all
invertible affine (respectively, linear) maps $\bAn\to \bAn$ is the subgroup
$\Aff_n$ (respectively, $\GL_n$) of $\Cn$.

\section{Compressing finite subgroups of the Cremona groups}\label{compr}

In this section,  $k=\overline k$ and ${\rm char}\,k=0$.

\subsection{\bf Terminology}\label{deff}

First, we fix the terminology.

Here, unless a special reservation is made, a {\it rational action} of a finite group $ G $ on an irreducible variety $X$ is understood as a {\it faithful} (that is, with trivial kernel)
action by birational self-maps of this variety.\;Specifying such an action is equi\-valent to specifying a group embedding $\varrho \colon G \hookrightarrow {\rm Bir} (X) $; therefore, hereinafter the very homomorphism $\varrho $ is called a rational action.\;The integer $\dim (X) $ is called the {\it dimension of the action} $\varrho $.\;We say that $\varrho (G)$ is the subgroup of ${\rm Bir} (X)$ defined by the action $\varrho $.\;If $ \varrho (G) \subseteq {\rm Aut}(X) $, then the action $\varrho $ is called {\it regular}.

 Any regular action $\rho$ of the group $G$ on an irreducible smooth complete variety $Y$ such that there is a $G$-equivariant birational isomorphism  $X\dashrightarrow Y$
 is called a {\it regularization of the action} $\varrho$;
 combining the results of  \cite[Thm.\;1]{Ro56} and \cite{BM97} shows that a regularization always exists.
If there is a regularization $\rho$ such that  $Y^G\neq \varnothing$, then we say that
$\varrho$ {\it has a fixed point}.

Consider two rational actions
$\varrho_i\colon G
\hookrightarrow
{\rm Bir}(X_i)$, $i=1, 2$. Let
$\pi^{}_{i}\colon X_i\dashrightarrow  X_i\dss G$, $i=1, 2$, be the corresponding rational quotients, see \cite[2.4]{PV94}.\;Assume that there is a
$G$-equivariant dominant rational map
$\varphi\colon X_1\dashrightarrow X_2$.\;Let
$\varphi^{}_{G}\colon X_1\dss G\dashrightarrow X_2\dss G$
be the dominant rational map induced by $\varphi$.
Then the following properties hold
(see, e.g.,\;\cite[2.6]{Re00_1}):

First, the commutative diagram
\begin{equation}\label{diag}
\begin{matrix}
\xymatrix@=10mm{X_1\ar@{-->}[r]^{\varphi}\ar@{-->}[d]_{\pi^{}_{1}}&X_2\ar@{-->}[d]^{\pi^{}_{2}}\\
X_1\dss G\ar@{-->}[r]^{\varphi^{}_{G}}& X_2\dss G
}
\end{matrix}
\end{equation}
\noindent
is cartesian, i.e.,
$\pi^{}_{1}$
is obtained from $\pi^{}_{2}$
by the base change $\varphi^{}_{G}$.\;In particular,  $X_1$
is birationally $G$-equivariantly isomorphic to the variety
\begin{equation}\label{close}
X_2\times _{X_2\dss G}(X_1\dss G):=\overline{\{(x, y)\in
{\rm Dom}(\pi_2)\times
{\rm Dom}(\varphi_G) \mid
\pi^{}_{2}(x)=\varphi^{}_{G}(y)\}}
\end{equation}
\noindent(the bar in  \eqref{close} means the closure in $X_2\times X_1\dss G$), on which $G$ acts through \;$X_2$.

Second, for every irreducible variety $Z$ and every dominant rational map
$\beta\colon Z\dashrightarrow X_2\dss G$ such that the variety $X_2\times_{X_2\dss G}Z$
is irreducible, the latter
inherits through $X_2$ a rational action of the group $G$ such that commutative diagram \eqref{diag} with $ X_1 = X_2 \times_ {X_2 \dss G} Z $, $ \varphi ^ {} _ {G}
=\beta $, and $\varphi={\rm pr}_1$ holds.

    It is said \cite{Re04} that $\varphi$  is a  {\it compression} of the action ${\varrho}_1$ into the action  ${\varrho}_2$ (or that ${\varrho}_2$ {\it is obtained by the compression} $\varphi$ from ${\varrho}_1$), and also \cite[3.4]{Po14_1} that
   ${\varrho}_1$ {\it is obtained by the base change} $\varphi_G$
    from ${\varrho}_2$.\;A compression that is not (or, respectively, is) a birational isomorphism is called {\it nontrivial} (respectively, {\it trivial}\,);
    in this case, we say that ${\varrho}_1$ {\it is obtained by a nontrivial
    $(${\rm respectively,} trivial\,$)$ base change} from ${\varrho}_2$.\;If for $\varrho_1$ there is $\varrho_2$, which is obtained from  $\varrho_1$ by a nontrivial compression, then we say that  $\varrho_1$ is {\it nontrivially compressible}, and otherwise,  that
    it is  {\it incompressible}.
   Similar terminology applies to groups: if $G_i\subseteq {\rm Bir}(X_i)$, $i=1,2$, are finite subgroups isomorphic to $G$, then we say that $G_1$ is {\it compressible} into $G_2$, if there are rational actions
   $\varrho_i\colon G
\hookrightarrow
{\rm Bir}(X_i)$, $i=1, 2$, such that $\varrho_i(G)=G_i$, $i=1, 2$, and $\varrho_2$ is obtained by a compression $\varphi$ from $\varrho_1$.\;If $\varrho_1,\varrho_2, \varphi$  can be chosen so that $\varphi$ is nontrivial, then $G_1$ is called {\it nontrivialy compressible} into $G_2$.\;If $G_1$ does not admit any nontrivial compression into any subgroup in
${\rm Bir}(X_2)$, then $G_1$ is called {\it incompres\-sible}.

In the case when $X_1=X_2$ and $\varrho_1=\varrho_2$, we are talking about
 {\it self-compressions} of the action and the group.\;In particular, if in this case there exists a nontrivial compression, then we say that $\varrho_1(G)$ is a
{\it nontrivially self-compressible} subgroup of ${\rm Bir}(X)$.

\subsection{Self-compressibility of finite subgroups in  \boldmath$\Cl$:\;reformulation}\label{intr}
First we consider the problem of nontrivial self-compressibility of finite subgroups
  in the Cremo\-na group $\Cl $ of rank 1. It can be reformulated as follows.

  We assume that $\bAl=\{(a_0:a_1)\in \mathbf P^1\mid a_0\neq 0\}$ and denote  the standard coordinate function  $x_1\in k[\bAl]$ by $z$. The elements of every finite subgroup $G$ of the Cremona group $\Cl$ are fractional-linear functions from the field
  $k(z)$ (considered as rational maps $\bAl\dashrightarrow \bAl$).\;The restriction to $\bAl$  defines a bijection between the set of self-compressions $\mathbf P^1\to \mathbf P^1$ of the group $G$ and the set of rational functions $f=f(z)\in k(z)$, which are solutions of the following system of functional equations:
 \begin{equation}\label{fueq}
f\Big(\frac{az+b}{cz+d}\Big)=\frac{af+b}{cf+d}\quad\mbox{for all}\;\; \frac{az+b}{cz+d}\in G.
\end{equation}

In this setting, the nontriviality of the self-compression defined by the rational func\-tion
$f$ is equivalent to the condition $\deg(f)\geqslant 2$.\;Note that in \eqref{fueq} instead of
all functions from the group $G$ it suffices to consider only the generators of this group.

Thus, the question of nontrivial compressibility of the group $G$  is equivalent to the question of the existence of a rational function $f$ of degree $\geqslant 2$ among the solutions of system  \eqref{fueq}.

\subsection{Compressibility of binary polyhedral groups:\;formulation of the re\-sult} The comprehensive answer to the above question can be obtained for any finite subgroup of the Cremona group $\Cl$:\;all of them are nontrivially compressible.
This asnwer is an immediate corollary of a
 more subtle result, which we obtain here.\;Namely,
 we prove that there exists
 infinitely many homogeneous polynomial self-compressions $\bf A\!^2\to \bf A\!^2$
 of any binary polyhedral group,
 descending to the nontrivial self-compressions
  $\bf P^1\to \bf P^1$ of the corresponding
  polyhedral group.\;The proof is effective and gives a way to explicitly specify these self-compressions by formulas
  (see Remark (c) in Subsection \ref{comment}).

  We now give the precise formulation of this result.

       Let $G$ be a nontrivial finite subgroup of the group ${\rm PSL}_2=\Aut ({\bf P}^1)=\Cl$.\;We consider the canonical homomorphism
  \begin{equation*}
  \nu\colon{\rm SL}_2\to {\rm PSL}_2
  \end{equation*}
  whose kernel is  the center $Z:=(-{\rm id})$.\;The group
  \begin{equation}\label{bing}
  \widetilde G:=\nu^{-1}(G)\subset {\rm SL}_2
  \end{equation}
  is either a binary rotation group of one of the regular polyhedra (dihedron, tetra\-hed\-ron, octahedron, or icosahedron), or a cyclic group of even order $\geqslant 4$.

   The subset $\bA2_{\,\ast}:=\bA2\setminus o$ is open in $\bA2$ and stable with respect to the actions on $\bA2$ of the groups $\widetilde G$ and $T:=\{(tx_1, tx_2)\mid t\in k^\times\}$.\;Let
 \begin{equation*}
 \pi\colon\bA2_{\,\ast}:=\bA2\setminus o
 \to \bf P^1
 \end{equation*}
 be the natural projection.\;The pair $(\pi, {\bf P}^1)$ is
 a geometric quotient for the action of the torus
 $T$ on $\bA2_{\,\ast}$.\;The morphism $\pi$ is $\widetilde G$-equivariant if we assume that the action of  $\widetilde G$ on  $\bf P^1$ is the restriction on
 $\widetilde G$ of the homomorphism $\nu$ (this action is not faithful, its kernel is $Z$).

  If a self-compression
 \begin{equation}\label{hp}
 \widetilde\varphi=({\widetilde\varphi}_1, {\widetilde\varphi}_2)\colon \bA2\dashrightarrow \bA2
 \end{equation}
  of  the group $\widetilde G$ is polynomial homogeneous of degree $d$,
  then the morphism
$\pi\circ \widetilde\varphi$ is constant on the  $T$-orbits in $\bA2_{\,\ast}$
and, therefore,
factors through $\pi$, i.e.,\;there is a morphism
 \begin{equation}\label{gd}
 \varphi\colon \bf P^1\to \bf P^1,
  \end{equation}
  such that $\varphi\circ\pi=\widetilde\varphi\circ\pi$.\;It is dominant
  (and therefore surjective) in view of the dominance of the morphism $\widetilde \varphi$.
 From the $\widetilde G$-equivariance of the morphisms $\pi$ and $\widetilde \varphi$ it follows the
 $\widetilde G$-equivariance\,---\,and therefore the $G$-equivariance\,---\,of the morphism $\varphi$.
 Consequently, $\varphi$ is the self-compression of the natural action of the group
 $G$ on $\mathbf P^1$.\;We say that the {\it self-compression $\varphi$ is a descent of the self-compression $\widetilde\varphi$}.

 \begin{theorem}\label{Ps} Let $G$ be a nontrivial finite subgroup of the Cremona group
 $\Cl={\rm PSL}_2=\Aut ({\bf P}^1)$.\;Associate with it the formal power series
 \begin{equation}\label{SG}
 S_G(t)=
 \sum_{n\geqslant 0}s_nt^n\in \bf Z[[t]]
\end{equation}
of the following form:
 \begin{enumerate}[\hskip 4.2mm \rm(a)]
 \item If $G$ is a rotation group of  tetrahedron, octahedron, or icosahedron, then
  \begin{equation}\label{SGPo}
 S_G(t)=
 t^{2a-1}(1+t^{4a-6})
 \sum_{n\geqslant 0}t^{2na}
 \sum_{n\geqslant 0}t^{(4a-4)n}+
t^{4a-5}
\sum_{n\geqslant 0} t^{(4a-4)n},
 \end{equation}
 where, respectively, $a=3, 4,$ or $6$.
  \item If $G$ is either a dihedral group of order $2\ell\geqslant 4$ or a cyclic group of order $\ell\geqslant 2$, then
 \begin{equation}\label{SDPo}
 S_G(t)=
 \sum_{n\geqslant 0}t^{2\ell(n+1)-1}.
 \end{equation}
  \end{enumerate}
Suppose that the coefficient  $s_d$ of the series {\rm \eqref{SG}}
is different from zero.\;Then
there exists a polynomial homogeneous self-compression {\rm \eqref{hp}}
of the binary group
 $\widetilde G$
 {\rm(}see {\rm \eqref{bing})}, whose degree is $d$, and
descent  {\rm \eqref{gd}}
 is a nontrivial self-compression of the group $G$.
 \end{theorem}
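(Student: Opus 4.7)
The plan is to translate the existence question into a problem in classical invariant theory. A polynomial homogeneous map $\widetilde\varphi = (\widetilde\varphi_1, \widetilde\varphi_2)\colon \bA2 \dashrightarrow \bA2$ of degree $d$ is a self-compression of $\widetilde G$ exactly when it is a dominant $\widetilde G$-equivariant map $V \to V$, i.e., an element of $M_d := (V \otimes k[x_1,x_2]_d)^{\widetilde G}$ — the classical space of $\widetilde G$-covariants of order $1$ and degree $d$ — whose components are $k$-algebraically independent. Since the descent $\varphi\colon \mathbf{P}^1 \to \mathbf{P}^1$ has degree $d - \deg\gcd(\widetilde\varphi_1, \widetilde\varphi_2)$, and since a positive descent degree automatically forces algebraic independence of the components, the theorem reduces to exhibiting, for each $d$ with $s_d \neq 0$, an element $\widetilde\varphi \in M_d$ with $\deg\gcd(\widetilde\varphi_1, \widetilde\varphi_2) \leq d-2$.

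For case (b) I would proceed by explicit monomial formulas. When $\widetilde G = \langle\mathrm{diag}(\zeta, \zeta^{-1})\rangle$ is cyclic of order $2\ell$ with $\zeta$ a primitive $2\ell$-th root of unity, a direct character check shows that $\widetilde\varphi = (x_2^d, x_1^d)$ is $\widetilde G$-equivariant precisely when $d \equiv -1 \pmod{2\ell}$, i.e., $d = 2\ell(n+1)-1$, and the coprime monomial components give a descent of full degree $d \geq 2\ell-1 \geq 3$. In the binary dihedral case the extra involution $\widetilde s \in \widetilde G$ can be arranged to act as $(x_1, x_2) \mapsto (-x_2, x_1)$, so the sign-twisted map $\widetilde\varphi = (x_2^d, -x_1^d)$ will satisfy the additional equivariance condition (using that $d$ is odd) without spoiling coprimality, yielding the same set of admissible degrees.

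For case (a) my key tool will be the symplectic-gradient construction: for any $\widetilde G$-invariant form $F$ of degree $e$, the pair $X_F := (-\partial_2 F, \partial_1 F)$ is a $\widetilde G$-covariant of degree $e-1$ — since $\widetilde G \subset \SL_2$ preserves the symplectic form on $V$ — and the Euler identity $eF = x_1\partial_1 F + x_2\partial_2 F$ forces $\mathrm{rad}(\gcd(\partial_1 F, \partial_2 F)) = \mathrm{rad}(F)$. Applied to Klein's three basic invariants $F_1, F_2, F_3$ of degrees $2a$, $4a-4$, $6a-6$ — each squarefree on $\mathbf{P}^1$, with pairwise disjoint zero loci — this produces basic covariants $X_{F_1}, X_{F_2}, X_{F_3}$ in degrees $2a-1$, $4a-5$, $6a-7$, each with coprime components and hence nontrivial descent. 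Multiplying $X_{F_1}$ and $X_{F_3}$ by arbitrary products $F_1^i F_2^j$ will account for the first summand of formula \eqref{SGPo} (the factor $1 + t^{4a-6}$ reflecting the degree gap $(6a-7) - (2a-1) = 4a-6$ between $X_{F_1}$ and $X_{F_3}$), while the products $F_2^j X_{F_2}$ will account for the second summand; multiplication by any invariant $h$ preserves the descent, since $\gcd(h\widetilde\varphi_1, h\widetilde\varphi_2) = h \gcd(\widetilde\varphi_1, \widetilde\varphi_2)$ so that the total degree and the gcd degree each grow by $\deg h$.

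The hard part will be the uniform verification in the polyhedral case that the Klein invariants really do have the structural properties used above — namely squarefreeness of each $F_k$ and pairwise disjointness of their zero loci on $\mathbf{P}^1$ — in each of the three cases $a = 3, 4, 6$. These facts are classical (Klein describes each $F_k$ as a product of distinct linear forms defining a $\widetilde G$-stable subset of $\mathbf{P}^1$, the three subsets being mutually disjoint unions of vertex, edge, and face orbits of the associated regular polyhedron together with one generic orbit), but they must be invoked cleanly to guarantee that no accidental common factor beyond $F_1^i F_2^j$ appears in $F_1^i F_2^j X_{F_k}$. A bookkeeping degree count then shows that these products realize exactly the exponents in the support of $S_G(t)$.
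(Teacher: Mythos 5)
Your strategy is genuinely different from the paper's: instead of reducing the theorem to the numerical criterion of Lemma \ref{l4} (the multiplicity of $A_1$ in $A_d$ exceeding $\max_{\gamma\in[\chi]}\dim A(\gamma)_{d-1}$) and verifying that criterion through Springer's Poincar\'e series and the McKay correspondence, you propose to exhibit covariants explicitly. Your case (b) is correct and, if anything, more elementary than the paper's treatment: the monomial maps $(x_2^d,x_1^d)$, resp.\ $(x_2^d,-x_1^d)$, are equivariant exactly for $d\equiv-1\pmod{2\ell}$, have coprime components, and realize precisely the exponents in \eqref{SDPo} (the paper instead reduces the cyclic case to the dihedral one). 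The symplectic-gradient idea for case (a) is also sound in principle, and the degree bookkeeping of the products $F_1^iF_2^jX_{F_k}$ does match the support of \eqref{SGPo}.

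The gap is in the step you yourself flag as ``the hard part'': the structural claim about Klein's basic invariants is false for two of the three groups. For the binary octahedral group ($a=4$) the space of invariants of degree $4a-4=12$ is one-dimensional, spanned by $f_6^2$, the \emph{square} of the degree-6 vertex form (which is only a semi-invariant); so no squarefree invariant of that degree exists, and its zero locus meets that of the degree-18 invariant $f_6f_{12}$, so ``pairwise disjoint'' fails as well. For the binary tetrahedral group ($a=3$) the natural degree-12 generators are the cubes $\Phi_v^3,\Phi_f^3$ of degree-4 semi-invariants; only a generic member of the two-dimensional space of degree-12 invariants is squarefree. Only in the icosahedral case are the three basic invariants the vertex, face and edge ground forms with the properties you assert. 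Consequently the coprimality of the components of $X_{F_2}$ (octahedral) and of $X_{F_3}$ (tetrahedral, if taken as $\Phi_v^3$) does not follow from your Euler argument; indeed $X_{f_6^2}=2f_6X_{f_6}$ has $f_6$ as a common factor. (Also, Euler's identity does not give ${\rm rad}(\gcd(\partial_1F,\partial_2F))={\rm rad}(F)$; it gives only that the gcd divides $F$ and that each of its prime factors is a repeated factor of $F$, whence gcd $=1$ when $F$ is squarefree.) The defect is repairable: multiplying by a (semi-)invariant factor does not change the descent, so $X_{f_6^2}$ descends to the degree-5 map given by the partials of the squarefree form $f_6$, which is still nontrivial, and analogously in the tetrahedral case; equivalently, one should work with the vertex/edge/face ground forms and their characters rather than with the basic invariants, after which the nontriviality of each descent (cf.\ Lemma \ref{nontriv}) and the degree count go through. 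But as written, the uniform verification you defer to is exactly the step that fails.
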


The proof of Theorem  \ref{Ps} will be given in  Subsection \ref{proof},
after proving several necessary auxiliary statements
in Subsection \ref{ac}.

\subsection{Application:\;self-compressibility of finite subgroups of \boldmath$\Cl$}
Theo\-rem \ref{Ps} im\-mediately implies statement (i)  of the following theorem.

\begin{theorem}\label{Cr1} \
 \begin{enumerate}[\hskip 2.2mm\rm(i)]
 \item
 Every finite subgroup of  $\Cl$ is nontrivially self-compressible.
 \item Every compression of a finite subgroup of  $\Cl$ is a compression ${\mathbf P}^1\to {\mathbf P}^1$ into a conjugate subgroup.
 \end{enumerate}
 \end{theorem}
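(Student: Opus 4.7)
The plan is to derive (i) as an immediate corollary of Theorem~\ref{Ps}, and to settle (ii) by a short dimension count followed by an appeal to Klein's classification of finite subgroups of $\mathrm{PSL}_2$.

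For part (i), I would inspect the power series $S_G(t)$ defined in Theorem~\ref{Ps}. In the polyhedral case (a), the term $t^{2a-1}$ occurs with coefficient $1$, and the geometric-series factors $\sum_{n\geqslant 0}t^{(4a-4)n}$ force infinitely many further coefficients to be nonzero; in the dihedral/cyclic case (b), every coefficient $s_{2\ell(n+1)-1}$ for $n\geqslant 0$ equals $1$. In either case one can pick $d>0$ with $s_d\neq 0$, and Theorem~\ref{Ps} then produces a polynomial homogeneous self-compression of $\widetilde G$ whose descent is a nontrivial self-compression of $G$. (The trivial subgroup is a degenerate case in which any endomorphism of $\mathbf{P}^1$ of degree $\geqslant 2$ serves as a nontrivial self-compression.)

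For part (ii), I would consider an arbitrary compression $\varphi\colon X_1\dashrightarrow X_2$ witnessing that a finite subgroup $\varrho_1(G)\subset\Cl$ compresses to $\varrho_2(G)\subset\mathrm{Bir}(X_2)$. Since $\varrho_1(G)\subset\Cl=\mathrm{Bir}(\mathbf{P}^1)$, one may take $X_1=\mathbf{P}^1$. Dominance of $\varphi$ gives $\dim X_2\leqslant 1$, and faithfulness of $\varrho_2$ on the nontrivial group $G$ excludes $\dim X_2=0$, so $\dim X_2=1$. Passing to the smooth projective model, $\varphi$ extends to a surjective morphism from $\mathbf{P}^1$ onto a smooth projective curve $Y_2$, and L\"uroth's theorem forces $Y_2\cong\mathbf{P}^1$. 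Thus $\varrho_2(G)\subset\mathrm{Aut}(\mathbf{P}^1)=\Cl$, and the compression may be realized as a morphism $\mathbf{P}^1\to\mathbf{P}^1$.

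It then remains to see that $\varrho_2(G)$ is conjugate to $\varrho_1(G)$ inside $\Cl$. For this I would invoke Klein's theorem: over an algebraically closed field of characteristic zero, the finite subgroups of $\mathrm{PSL}_2(k)$ are, up to conjugacy, the cyclic, dihedral, tetrahedral, octahedral, and icosahedral groups, and each abstract isomorphism type forms a single conjugacy class. Hence the abstract isomorphism $\varrho_1(G)\cong\varrho_2(G)$ upgrades to conjugacy in $\Cl$, which is the desired conclusion. I do not anticipate any serious obstacle beyond these routine steps; the only point to keep track of is that all the reductions preserve faithfulness of $\varrho_2$, which is automatic from the definition of a rational action.
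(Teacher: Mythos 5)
Your proposal is correct and follows essentially the same route as the paper: part (i) is read off from Theorem~\ref{Ps} by observing that $S_G(t)$ has nonzero coefficients, and part (ii) uses rationality (L\"uroth) of any curve dominated by $\mathbf P^1$ together with the classical fact that isomorphic finite subgroups of $\Cl$ are conjugate. Your explicit treatment of the trivial subgroup and the exclusion of a zero-dimensional target are minor tidying of points the paper leaves implicit.
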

  \begin{proof}[Proof of {\rm(ii)}]
Since every variety, to which $\mathbf P^1$ maps dominantly, is rational, (ii)
fol\-lows from the definition of compression and
the well-known fact that
two finite subgroups of $\Cl$ are isomorphic if and only if they are conjugate.
   \end{proof}

   \begin{remark} {\rm Another proof of statement (i) of Theorem \ref{Cr1} is given in \cite[Cor.\,1.3]{GA16}.
   This proof consists of presenting explicit formulas,
    in relation to which the reader
    is supposed to verify by direct computations that they define
   $G$-equivariant maps.\;In \cite{GA16}
    there are no comments about the origin of these formulas.\;For example (see \cite[Lemma 9.7]{GA16}), if  $\omega_5\in k $ is a primitive fifth root of 1 and $G$ is the lying in $\Cl$ rotation group of the icosahedron generated by the fractional-linear transformations
   \begin{equation*}
   \omega_5z \quad \mbox{and}\quad \frac{(\omega_5+\omega_5^{-1})z+1}{z-(\omega_5+\omega_5^{-1})},
   \end{equation*}
   then such a formula has the the appearance
   \begin{equation*}
 {\bf P}^1\to {\bf P}^1,\quad (x:y)\mapsto (x^{11}+66x^6y^5-11xy^{10}:-11x^{10}y-66x^5y^6+y^{11}).
\end{equation*}

Below  (see Remark (c) in Subsection \ref{comment}) we explain
how in principle the explicit formulas can be found that define any
self-compression specified in
Theorem\;\ref{Ps}.
}
   \end{remark}

   \subsection{Notations} To prove Theorem \ref{Ps} we need several notations.

We denote by ${\widetilde G}^\vee$
{\it  the set of characters of all simple  $k{\widetilde G}$-modules}.

The action of the group ${\widetilde G}$ on the affine plane $\bA2$
defines on the algebra
$$A:=k[\bA2]=k[x_1, x_2]$$
the structure of  a $k\widetilde G$-module.\;The latter is graded: each space
$A_n$ is its $k\widetilde G$-sub\-mo\-du\-le.

We denote by $\chi$ {\it the character of the submodule $A_1$}.\;If the group $\widetilde G$ is not cyclic, this submodule is simple.

For any simple  $k\widetilde G$-module $M$ with character $\gamma\in {\widetilde G}^\vee$, we denote by $A(\gamma)$  the {\it isotypic component of type $M$ in the $k\widetilde G$-module $A$}; it is its graded submodule.
In particular, $A(1)$ is the subalgebra of ${\widetilde G}$-invariants in $A$.

We will also need the following set of characters:
\begin{equation}\label{mchii}
[\chi]:=\{\gamma\in {\widetilde G}^\vee\mid \dim(\gamma)=1, \gamma\chi=\chi\}.
\end{equation}
It is not empty, because $1\in [\chi]$.

For any finite-dimensional  $k\widetilde G$-module $L$, we put
\begin{equation}\label{mu}
{\rm mult}_\chi (L):=\max\{d
\mid\mbox{there exists an embedding of $k\widetilde G$-modules
$A_1^{\oplus d}\hookrightarrow L$}\}.
\end{equation}

\subsection{Auxiliary statements}\label{ac} We now prove several auxiliary
statements that are used in the proof of Theorem \ref{Ps}.

\begin{lemma}\label{exist}
Let $H$ be a subgroup of ${\rm GL}_n$ and let $L$ be a finite-dimensional $k$-linear subspace in $k[x_1,\ldots, x_n]$.
\begin{enumerate}[\hskip 4.2mm\rm(a)]
\item The following conditions are equivalent:
\begin{enumerate}[\hskip 0mm$\rm(a_1)$]
\item[$(\rm a_1)$] $L$ is a submodule of the $kH$-module $k[x_1,
\ldots, x_n]$ that is isomorphic to the $kH$-module $k[x_1,
\ldots, x_n]_1.$
\item[$(\rm a_2)$] There exists a basis $\sigma_1,\ldots, \sigma_n$ of the linear space $L$ such that the mor\-phism
    $\sigma:=(\sigma_1,\ldots, \sigma_n)\colon
    \bAn\to \bAn$ is $H$-equiavriant.
\end{enumerate}
\item Suppose that the equivalent conditions $(\rm a_1)$ and $(\rm a_2)$ hold.
\begin{enumerate}[\hskip 0mm$\rm(b_1)$]
\item[$(\rm b_1)$] The morphism $\sigma $ from $(\rm a_2)$ is dominant if
and only if  $\sigma_1,\ldots, \sigma_n$ are algebraically independent over $k$.
\item[$(\rm b_2)$] If $n=2$ and $\sigma_1, \sigma_2\in k[x_1, x_2]_d$
for some $d$, then $\sigma_1,\sigma_2$ are algebraically independent over $k$.
\end{enumerate}
\end{enumerate}
\end{lemma}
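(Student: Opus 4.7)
My plan is to reduce every part to algebra via the pullback $\sigma^*\colon k[\bAn]\to k[\bAn]$ of $\sigma=(\sigma_1,\ldots,\sigma_n)$, which satisfies $\sigma^*(x_i)=\sigma_i$ and, viewed as a $k$-algebra endomorphism of $k[\bAn]$, is $kH$-equivariant precisely when $\sigma$ is $H$-equivariant as a morphism.

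For part (a), under $(\mathrm a_1)$, I would fix any $kH$-isomorphism $\varphi\colon k[x_1,\ldots,x_n]_1\xrightarrow{\sim}L$, put $\sigma_i:=\varphi(x_i)$ (so $\sigma_1,\ldots,\sigma_n$ is a basis of $L$), and extend $\varphi$ uniquely to a $k$-algebra endomorphism $\Phi$ of $k[\bAn]$ via $\Phi(x_i):=\sigma_i$. Denoting by $h_*$ the algebra automorphism of $k[\bAn]$ induced by $h\in H$, the two $k$-algebra endomorphisms $\Phi\circ h_*$ and $h_*\circ\Phi$ of $k[\bAn]$ agree on every generator $x_i$ by the $kH$-linearity of $\varphi$ (since $h_*(x_i)$ is the $k$-linear combination of the $x_j$'s prescribed by $H\subseteq\GL_n$), and hence coincide; thus $\Phi=\sigma^*$ is $kH$-equivariant and $\sigma$ is $H$-equivariant, which gives $(\mathrm a_2)$. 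Conversely, under $(\mathrm a_2)$, the $kH$-equivariance of $\sigma^*$ restricts to a $kH$-embedding $k[x_1,\ldots,x_n]_1\hookrightarrow k[\bAn]$, $x_i\mapsto\sigma_i$, whose image is $L$ by the basis hypothesis, yielding the isomorphism required in $(\mathrm a_1)$.

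Part $(\mathrm b_1)$ is essentially a translation: a morphism of irreducible affine varieties is dominant iff its pullback on coordinate rings is injective, and the $k$-algebra endomorphism $x_i\mapsto\sigma_i$ of the polynomial algebra $k[\bAn]$ is injective iff $\sigma_1,\ldots,\sigma_n$ are algebraically independent over $k$.

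For $(\mathrm b_2)$, I would argue by contradiction. Linear independence of $\sigma_1,\sigma_2$ (part of a basis of the two-dimensional space $L$) already excludes $d=0$, so $d\geq 1$. Suppose $\sigma_1,\sigma_2$ are algebraically dependent; by $(\mathrm b_1)$, $\sigma$ is not dominant, so $Y:=\overline{\sigma(\bA2)}$ is a proper irreducible closed subvariety of $\bA2$. The homogeneity identity $\sigma(tx_1,tx_2)=t^d\sigma(x_1,x_2)$ together with the surjectivity of $t\mapsto t^d$ on $k^\times$ (valid because $k$ is algebraically closed) shows that $Y$ is stable under the scaling action of $k^\times$ on the target $\bA2$; hence its prime ideal $I(Y)\subset k[x_1,x_2]$ is homogeneous, and as $Y\neq\bA2$ while $Y\neq\{o\}$ (the latter because $\sigma_1,\sigma_2$ are nonzero), $I(Y)$ is a nonzero proper homogeneous prime ideal, necessarily principal and generated by a homogeneous irreducible polynomial in two variables, which over algebraically closed $k$ must be a single linear form. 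Thus $Y$ is a line through the origin, which forces a nontrivial linear relation between $\sigma_1$ and $\sigma_2$, contradicting their linear independence. I expect this $k^\times$-equivariance step in $(\mathrm b_2)$ to be the only nontrivial one; parts (a) and $(\mathrm b_1)$ are routine dictionary translations between the geometry of $\sigma$ and the algebra of $\sigma^*$.
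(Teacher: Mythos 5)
Your proofs of (a) and $(\rm b_1)$ are correct and essentially identical to the paper's: both directions of (a) come down to transporting the basis $x_1,\ldots,x_n$ through a $kH$-isomorphism and observing that $\sigma^*$ restricted to the degree-one part recovers $L$, and $(\rm b_1)$ is the standard dictionary ``dominant $\Leftrightarrow$ $\sigma^*$ injective $\Leftrightarrow$ algebraic independence''. Where you genuinely diverge is $(\rm b_2)$. The paper argues purely algebraically: assuming a dependence relation, it may be taken to be a binary form $F(t_1,t_2)$ of some degree $s$ (because $\sigma_1,\sigma_2$ are forms of equal degree), and dividing $F(\sigma_1,\sigma_2)=0$ by $\sigma_2^s$ shows that the nonconstant rational function $\sigma_1/\sigma_2$ takes infinitely many values, all of which are roots of the fixed nonzero one-variable polynomial $\sum_i\alpha_it^{s-i}$ --- a contradiction. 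You instead argue geometrically: non-dominance makes $Y=\overline{\sigma(\bA2)}$ a proper irreducible closed subset; the homogeneity identity $\sigma(ta)=t^d\sigma(a)$ plus surjectivity of $t\mapsto t^d$ on $k^\times$ makes $Y$ a cone; ruling out $Y=\{o\}$ (both $\sigma_i$ nonzero), its ideal is a nonzero homogeneous height-one prime of the UFD $k[x_1,x_2]$, hence generated by an irreducible binary form, which over $k=\overline k$ is linear; so $Y$ is a line through the origin, contradicting the linear independence of $\sigma_1,\sigma_2$. Both arguments are sound and both hinge on linear independence; the paper's is more elementary (it needs only that $k$ is infinite), while yours buys a sharper geometric picture --- the image closure of a non-dominant homogeneous self-map of $\bA2$ is a line through the origin --- at the cost of invoking the classification of homogeneous primes in $k[x_1,x_2]$ and the splitting of binary forms over an algebraically closed field. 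Your parenthetical ``necessarily principal'' does implicitly use that the only non-principal homogeneous prime is the irrelevant maximal ideal, which you have excluded via $Y\neq\{o\}$; it would be worth saying this in one line, but it is not a gap.
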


\begin{proof} $(\rm a_1)\hskip -1mm\Rightarrow\hskip -1mm(\rm a_2)$: Let $k[x_1,\ldots, x_n]_1\to L$ be an isomorphism of  $kH$-modules and let $\sigma_i$ be the image of $x_i$ with respect to this isomorphism. Then the $H$-equivariance of $\sigma$ follows directly from the definitions and formulas \eqref{ast}, \eqref{string}.

$(\rm a_2)\hskip -1mm\Rightarrow\hskip -1mm(\rm a_1)$: It follows from \eqref{ast}, \eqref{string} that the restriction of
$\sigma^*$ to $k[x_1,\ldots, x_n]_1$ is an isomorphism of linear spaces $k[x_1,\ldots, x_n]_1\to L$.
From this and the $H$-stability of $k[x_1,\ldots, x_n]_1$ it follows the $H$-stability of $L$, so this restriction is an isomorphism of $kH$-modules.

$(\rm b_1)$:\;The dominance of $\sigma$ is equivalent to the triviality of the kernel of the homomorphism
$\sigma^*$ of the algebra $k[x_1,\ldots, x_n]$,  which, in view of  \eqref{string},
is equivalent to the algebraic independence of  $\sigma_1,\ldots, \sigma_n$ over $k$.

$(\rm b_2)$:\;Suppose that $\sigma_1$, $\sigma_2$ are algebraically dependent over $k$, i.e.,\; there exists a nonzero polynomial $F=F(t_1, t_2)\in k[t_1, t_2]$, where $t_1, t_2$ are variables, such that
\begin{equation}\label{F0}
F(\sigma_1, \sigma_2)=0.
\end{equation}
Since $\sigma_1$ and $\sigma_2$ are forms in $x_1, x_2$ of the same degree, we can (and shall) assume that $F$ is a form in $t_1, t_2$, say, of degree $s$:
\begin{equation}\label{form}
F(t_1, t_2)=\sum_{i=0}^s\alpha_it_1^{s-i}t_2^{i},\quad \alpha_0,\ldots, \alpha_s\in k.
\end{equation}

In view of $(\rm a_2)$, the polynomial $\sigma_2$
is nonzero, so that we can consider the rational function
$\sigma_1/\sigma_2\in k(x_1, x_2)$.
From \eqref{F0}, \eqref{form} we get for it the relation
\begin{equation}\label{form2}
0=\sum_{i=0}^s\alpha_i\Big(\frac{\sigma_1}{\sigma_2}\Big)^{\hskip -1mm {s-i}}.
\end{equation}

It follows from the linear independence of the polynomials $\sigma_1, \sigma_2$ over $k$ that the rational function  $\sigma_1/\sigma_2$ is not an element of $k$, and therefore
takes on $\bA2$ infinitely many different values. In view of \eqref{form2},
each of these values is the root of the nonzero polynomial
$\sum _{i=0}^s\alpha_it^{s-i}\in k[t]$, где $t=t_1/t_2$.\;This contradiction proves
the algebraic in\-de\-pen\-dence of $\sigma_1, \sigma_2$ over\;$k$.
\end{proof}


\begin{lemma}\label{nontriv} Let {\rm \eqref{hp}} be a polynomial homogeneous self-compression of the group $\widetilde G$, whose degree is $d$.\;Let a form $a\in A$ be the greatest common divisor of the forms ${\widetilde \varphi}_1$ and ${\widetilde \varphi}_2$ that define {\rm \eqref{hp}}. The following properties are equivalent:
\begin{enumerate}[\hskip 4.2mm\rm(a)]
\item
the descent {\rm\eqref{gd}} of the self-compression {\rm \eqref{hp}} is trivial;
\item $\deg(a)=d-1$;
\item there exists a character $\gamma\in [\chi]$ and an element $s\in A(\gamma)_{d-1}$ such that
   \begin{equation}\label{=a}
     {\widetilde\varphi}^*(A_1)=sA_1.
   \end{equation}
\end{enumerate}
\end{lemma}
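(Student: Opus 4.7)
The plan is to establish (a)$\Leftrightarrow$(b) via the standard degree formula for maps of $\mathbf{P}^1$, and then to run the cycle (b)$\Rightarrow$(c)$\Rightarrow$(b).

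Before tackling the equivalences I would record the structural fact that underlies everything. Since $\widetilde\varphi$ is a (dominant) self-compression, Lemma~\ref{exist}(b$_2$) forces $\widetilde\varphi_1, \widetilde\varphi_2$ to be algebraically\,---\,hence linearly\,---\,independent, so that $\widetilde\varphi^*(A_1) = \langle \widetilde\varphi_1, \widetilde\varphi_2\rangle$ is two-dimensional. The $\widetilde G$-equivariance of $\widetilde\varphi$ makes $\widetilde\varphi^*\colon A_1 \to \widetilde\varphi^*(A_1)$ an injective morphism of $k\widetilde G$-modules, so $\widetilde\varphi^*(A_1)$ is a submodule of $A_d$ isomorphic to $A_1$ and thus of character $\chi$.

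For (a)$\Leftrightarrow$(b) I would factor $\widetilde\varphi_i = a\widetilde\sigma_i$ with $\gcd(\widetilde\sigma_1, \widetilde\sigma_2) = 1$. The descent $\varphi$ is then given in homogeneous coordinates by the coprime pair $(\widetilde\sigma_1 : \widetilde\sigma_2)$ of common degree $d - \deg(a)$, so $\deg\varphi = d - \deg(a)$. Triviality of $\varphi$\,---\,which, $\mathbf{P}^1$ being smooth projective, just means $\varphi$ is an isomorphism of $\mathbf{P}^1$\,---\,is therefore equivalent to $\deg(a) = d-1$.

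For (b)$\Rightarrow$(c) I would set $s := a$; writing $\widetilde\varphi_i = s\sigma_i$ with linearly independent $\sigma_1, \sigma_2 \in A_1$ yields $\widetilde\varphi^*(A_1) = sA_1$ at once. The crucial point is the semi-invariance of $s$: for any $g \in \widetilde G$ the pair $(g\cdot\widetilde\varphi_1, g\cdot\widetilde\varphi_2)$ spans the $\widetilde G$-stable subspace $\widetilde\varphi^*(A_1)$, so by unique factorization in $k[x_1, x_2]$ its gcd $g\cdot s$ coincides with $s$ up to a scalar $\gamma(g)$. The resulting map $\gamma\colon\widetilde G \to k^\times$ is a one-dimensional character, so $s \in A(\gamma)_{d-1}$, and the identification $sA_1 = \widetilde\varphi^*(A_1) \cong A_1$ of $k\widetilde G$-modules forces $\gamma\chi = \chi$, i.e., $\gamma \in [\chi]$. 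The reverse direction (c)$\Rightarrow$(b) is then immediate: writing $\widetilde\varphi_i = s\tau_i$ with $\tau_1, \tau_2$ a basis of $A_1$ (such a basis exists because $\widetilde\varphi_1, \widetilde\varphi_2$ span $sA_1$), the coprimality of the linear forms $\tau_1, \tau_2$ in the UFD $k[x_1, x_2]$ yields $a = s$ up to scalar and hence $\deg(a) = d-1$.

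The only step requiring any real care is the semi-invariance argument in (b)$\Rightarrow$(c): one must observe that $\widetilde G$-equivariance of $\widetilde\varphi$ only implies $\widetilde G$-stability of the spanned subspace $\widetilde\varphi^*(A_1)$, not invariance of the individual $\widetilde\varphi_i$, and then exploit the unambiguity (up to scalar) of the gcd in the UFD $k[x_1, x_2]$ to pass from stability of the span to semi-invariance of the gcd.
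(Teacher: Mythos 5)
Your proposal is correct and takes essentially the same route as the paper: (a)$\Leftrightarrow$(b) via the homogeneous-coordinate description of the descent, and (b)$\Leftrightarrow$(c) by taking $s=a$ and showing the gcd is a $\widetilde G$-semi-invariant whose character lies in $[\chi]$ (the paper proves semi-invariance by comparing divisors using an auxiliary linear form, you by uniqueness of the gcd up to scalar in the UFD $k[x_1,x_2]$\,---\,a cosmetic difference). One small nit: the linear (indeed algebraic) independence of $\widetilde\varphi_1,\widetilde\varphi_2$ follows from dominance via Lemma~\ref{exist}$(\mathrm{b}_1)$ rather than $(\mathrm{b}_2)$.
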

\begin{proof}
(a)$\Leftrightarrow$(b):
If we consider $x_1$ and $x_2$ as homogeneous coordinates on $\mathbf
P^1$, then the self-compression \eqref{gd} is given by the formula
 \begin{equation}\label{frac}
 \varphi=\Big(\frac{{\widetilde \varphi}_1}{a}: \frac{{\widetilde\varphi}_2}{a}\Big)
 \end{equation}
  (see \cite[Chap.\;III, \S1, 4]{Sh13}).  Since every $k$-automorphism of the field of rational functions in one variable over  $k$ is a fractional linear transformation \cite[\S73]{Wa67}, it follows from \eqref{frac} and the inclusion $\widetilde \varphi_1, \widetilde\varphi_2\in A_d$ that the self-compression  $\varphi$ is trivial
  if and only if the forms ${{\widetilde \varphi}_1}/{a}$ и ${{\widetilde \varphi}_2}/{a}$ are linear, i.e., (b) is satisfied.

 (b)$\Leftrightarrow$(c):
Suppose that (b) holds. Then the equality
\begin{equation}\label{<>}
{\widetilde\varphi}^*(A_1)=\langle \widetilde \varphi_1, \widetilde \varphi_2\rangle
\end{equation}
and the definition of the form $a$ imply the equality \eqref{=a} for $s=a$.
In view of the
$\widetilde G$-invariance of the subspaces ${\widetilde\varphi}^*(A_1)$ and $A_1$, for every $g\in \widetilde G$, we obtain from  \eqref{=a} the following equalities:
\begin{equation}\label{dV*}
a A_1
=g\cdot (a A_1)=(g\cdot a)(g\cdot A_1)=(g\cdot a)A_1.
\end{equation}

We take some linear form $l\in A_1$, whose zero in $\mathbf P^1$
does not coincide with any of zeros of the form $a$. Since, in view of \eqref{dV*}, the form $(g\cdot a)l$ is divisible by $a$, and $\deg (g\cdot a)=\deg (a)$, this means that the divisors of the forms $a$ and $g\cdot a$ on $\mathbf P^1$ coincide, hence
$\langle a\rangle=\langle g\cdot a\rangle$. Therefore,
$a$ is a semi-invariant of the group $\widetilde G$.
Let $\gamma\in {\widetilde G}^\vee$ be the character of the one-dimensional $k\widetilde G$-module $\langle a\rangle$. Then $a\in A(\gamma)_{d-1}$, and
$\gamma\chi$ is the character of the
$k\widetilde G$-module $a A_1$. But since the $k\widetilde G$-modules $A_1$ and ${\widetilde\varphi}^*(A_1)$ are isomorphic, it follows from \eqref{=a} that the character of the
$k\widetilde G$-module $a A_1$ is $\chi$. Therefore, $\gamma\in [\chi]$. This proves  (b)$\Rightarrow$(c).

Conversely, if (c) is satisfied, then it follows from \eqref{=a} and \eqref{<>}
that
$s$ is the greatest common divisor of the forms  ${\widetilde\varphi}_1$ and
${\widetilde\varphi}_2$, and therefore $\langle s\rangle= \langle a\rangle$, and hence (b) is satisfied. This proves  (c)$\Rightarrow$(b).
\end{proof}

\begin{lemma}\label{grass} Let $H$ be a group,let $L$ be a nonzero $kH$-module of dimension $s<\infty$, and let $m$ be a positive integer.\;The Grassmanninan ${\rm Grass}(s, L^{\oplus m})$ contains a closed irreducible $(m-1)$-dimensional subset such that all the $s$-dimensional linear subspaces of the $kH$-module $L^{\oplus m}$
corresponding to its points are the submodules isomorphic to $L$.
 \end{lemma}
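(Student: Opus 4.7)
The plan is to produce the required subset as the image of a projective space. For each $\lambda=(\lambda_1,\ldots,\lambda_m)\in k^m\setminus\{0\}$, define the ``diagonal'' subspace
\begin{equation*}
L_\lambda:=\{(\lambda_1 v,\ldots,\lambda_m v)\mid v\in L\}\subset L^{\oplus m}.
\end{equation*}
First I would verify the elementary properties. Since the $H$-action on $L^{\oplus m}$ is diagonal and $L$ is $H$-stable, $L_\lambda$ is a $kH$-submodule; its dimension equals $s$ because, picking any index $i$ with $\lambda_i\neq 0$, the projection $L_\lambda\to L$, $(\lambda_1 v,\ldots,\lambda_m v)\mapsto v$, is a $kH$-module isomorphism. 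Hence every $L_\lambda$ is an $s$-dimensional submodule of $L^{\oplus m}$ isomorphic to $L$.

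Next I would show that $L_\lambda=L_{\lambda'}$ if and only if $\lambda$ and $\lambda'$ are proportional. The ``if'' direction is immediate. For ``only if'', fix any nonzero $v\in L$: the vector $(\lambda_1 v,\ldots,\lambda_m v)\in L_\lambda=L_{\lambda'}$ must equal $(\lambda_1' v',\ldots,\lambda_m' v')$ for some $v'\in L$, and comparing components in an index $i$ with $\lambda_i\neq 0$ forces $v'=(\lambda_i/\lambda_i')v$ and then $\lambda'_j\lambda_i=\lambda_j\lambda_i'$ for all $j$. Consequently the assignment $\lambda\mapsto L_\lambda$ descends to a well-defined injective map
\begin{equation*}
\iota\colon {\mathbf P}^{m-1}\longrightarrow {\rm Grass}(s,L^{\oplus m}).
\end{equation*}

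To conclude I would check that $\iota$ is a morphism. Fixing a basis $e_1,\ldots,e_s$ of $L$, the subspace $L_\lambda$ is the row span of the $s\times(sm)$ matrix whose $j$-th row is $(\lambda_1 e_j,\ldots,\lambda_m e_j)$; its Pl\"ucker coordinates are the $s\times s$ minors of this matrix, and each such minor is a homogeneous polynomial in $\lambda_1,\ldots,\lambda_m$. Hence $\iota$ is a morphism from ${\mathbf P}^{m-1}$, and its image $\iota({\mathbf P}^{m-1})$ is closed and irreducible by completeness and irreducibility of ${\mathbf P}^{m-1}$. Injectivity of $\iota$ yields $\dim \iota({\mathbf P}^{m-1})=m-1$, and by construction every point of the image is an $L$-isomorphic submodule, completing the proof.

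There is no real obstacle in the argument; the only point requiring a brief justification is that the set-theoretic map $\lambda\mapsto L_\lambda$ is a morphism to the Grassmannian, which is handled by the Pl\"ucker-coordinate computation above.
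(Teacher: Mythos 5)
Your proposal is correct and follows essentially the same route as the paper: the paper also parametrizes the diagonal submodules $(\lambda_1 v,\ldots,\lambda_m v)$ by points of $\mathbf P(k^m)$, shows the resulting map to the Grassmannian is an injective morphism, and takes its image. Your Pl\"ucker-coordinate check of the morphism property simply fills in a detail the paper leaves as ``not difficult to see.''
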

 \begin{proof} We assign to any nonzero vector
 $(\lambda_1,\ldots, \lambda_m)\in k^m$ the following embedding of the
 $kH$-modules:
\begin{equation*}\label{inj1}
\iota_{(\lambda_1,\ldots, \lambda_m)}\colon  L\hookrightarrow L^{\oplus m},\quad v\mapsto (\lambda_1v,\ldots, \lambda_mv).
\end{equation*}
The images of the embeddings $\iota_{(\lambda_1,\ldots, \lambda_m)}$  and $\iota_{(\mu_1,\ldots, \mu_m)}$ coincide if and only if the vectors $(\lambda_1,\ldots, \lambda_m)$ and $(\mu_1,\ldots, \mu_m)$ are proportional, i.e., the corresponding points $(\lambda_1:\ldots: \lambda_m)$ and $(\mu_1:\ldots: \mu_m)$ of the projective space $\mathbf P(k^m)$ coincide. Conse\-quent\-ly, the mapping
$\mathbf P(k^m)\to {\rm Gr}(s, L^{\oplus m}),$ which assigns to every point
$(\lambda_1:\ldots\break \ldots: \lambda_m)\in \mathbf P(k^m)$
the image of the embedding
$\iota_{(\lambda_1,\ldots, \lambda_m)}$,
is injective. It is not difficult to see that this mapping is a morphism. Therefore, its image is an irreducible closed subset of dimension $\dim (\mathbf P(k^m))=m-1$. It is this image that should be taken as the subset specified in the formulation of Lemma \ref{grass}.
 \end{proof}

\begin{lemma}\label{l4}
If, for a positive integer $d$, the inequality
\begin{equation}\label{condit}
{\rm mult}_\chi (A_d)
>\max\{\dim_k (A(\gamma)_{d-1})\mid \gamma\in [\chi]\},
\end{equation}
holds, then the group $\widetilde G$ admits a polynomial homogeneous self-compression {\rm \eqref{hp}} of degree $d$, whose descent  {\rm\eqref{gd}} is nontrivial.
\end{lemma}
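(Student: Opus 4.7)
The plan is to interpret polynomial homogeneous self-compressions of degree $d$ as points of the Grassmannian ${\rm Grass}(2, A_d)$, and then to use the hypothesis to find a point outside the ``bad'' locus of self-compressions with trivial descent.

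First, by Lemma~\ref{exist} (with $n=2$, so part $({\rm b}_2)$ automatically ensures dominance), a polynomial homogeneous self-compression $\widetilde\varphi = (\widetilde\varphi_1, \widetilde\varphi_2)$ of degree $d$ is equivalent to the data of a $k\widetilde G$-submodule $L := \langle \widetilde\varphi_1, \widetilde\varphi_2 \rangle \subseteq A_d$ isomorphic to $A_1$ as a $k\widetilde G$-module; call such an $L$ \emph{admissible}. Setting $m := {\rm mult}_\chi(A_d)$, the definition \eqref{mu} provides an embedding of $k\widetilde G$-modules $A_1^{\oplus m} \hookrightarrow A_d$, so Lemma~\ref{grass} yields a closed irreducible subset $X \subseteq {\rm Grass}(2, A_d)$ of dimension $m - 1$ all of whose points are admissible.

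By Lemma~\ref{nontriv}, the descent of the self-compression associated with $L$ is trivial precisely when $L = sA_1$ for some nonzero $s \in A(\gamma)_{d-1}$ with $\gamma \in [\chi]$. Let $N$ denote the right-hand side of \eqref{condit}. For each $\gamma \in [\chi]$, I would analyze the map $\mathbf{P}(A(\gamma)_{d-1}) \to {\rm Grass}(2, A_d)$, $[s] \mapsto sA_1$; this is a morphism, since multiplication by a nonzero $s$ sends the basis $x_1, x_2$ of $A_1$ to the linearly independent elements $sx_1, sx_2$ of $A_d$. It is also injective: if $sA_1 = s'A_1$, then $sx_1, sx_2 \in s'A_1$, whence $s' \mid sx_1$ and $s' \mid sx_2$ in the UFD $k[x_1, x_2]$; since $\gcd(x_1, x_2) = 1$, this forces $s' \mid s$, and by symmetry $s$ and $s'$ are associates. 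Consequently the image $B_\gamma$ is closed irreducible of dimension $\dim_k A(\gamma)_{d-1} - 1 \leqslant N - 1$, and the finite union $B := \bigcup_{\gamma \in [\chi]} B_\gamma$ is a closed subset of ${\rm Grass}(2, A_d)$ of dimension at most $N - 1$.

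From the hypothesis $m > N$ it then follows that $\dim X = m - 1 > N - 1 \geqslant \dim B$, so the irreducible variety $X$ is not contained in the closed set $B$, and any point of $X \setminus B$ yields an admissible $L$ whose associated self-compression has nontrivial descent. The main technical step is the precise identification of the bad locus through Lemma~\ref{nontriv} together with the verification that $[s] \mapsto sA_1$ embeds each $\mathbf{P}(A(\gamma)_{d-1})$ into ${\rm Grass}(2, A_d)$; both points rest on unique factorization in $k[x_1, x_2]$.
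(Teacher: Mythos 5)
Your proposal is correct and follows essentially the same route as the paper's proof: Lemma~\ref{exist} to pass between submodules of $A_d$ isomorphic to $A_1$ and homogeneous self-compressions, Lemma~\ref{grass} to produce the $(m-1)$-dimensional family $X$, Lemma~\ref{nontriv} to identify the locus of trivial descents as the images of the maps $[s]\mapsto sA_1$ on $\mathbf P(A(\gamma)_{d-1})$, and the same dimension count to find a point of $X$ outside that locus. The only (harmless) difference is that you additionally prove injectivity of $[s]\mapsto sA_1$ via unique factorization, where the paper only needs the upper bound $\dim Y(\gamma)\leqslant \dim_k A(\gamma)_{d-1}-1$.
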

\begin{proof}
Assume that the inequality \eqref{condit} holds.
For the sake of brevity, put
\begin{equation}\label{mmm}
m:={\rm mult}_\chi (A_d).
\end{equation}
It follows from \eqref{condit} that $m>0$.

According to \eqref{mu}, in the $k\widetilde G$-module $A_d$ there exists a submodule $M$ isomorphic to $A_1^{\oplus m}$. In view of $\dim (A_1)=2$ and Lemma\ref{grass}, in ${\rm Grass}(2, M)$ there exists an irreducible closed subset $X$ such that
\begin{equation}\label{d1}
  \dim (X)=m-1
\end{equation}
and all the $2$-dimensional linear subspaces of $M$ corresponding to its points are the submodules isomorphic to
 $A_1$. Since $M$ is a linear subspace in  $A_d$, the variety ${\rm Grass}(2, M)$, and hence $X$ as well, is a closed subset of ${\rm Grass}(2, A_d)$.

It follows from the definition of the set $[\chi]$ (see \eqref{mchii}) that for every character $\gamma\in [\chi]$ and every nonzero element
 $s\in A(\gamma)_{d-1}$, the linear subspace $sA_1$ is a submodule of the $k\widetilde G$-module $A_d$ isomorphic to $A_1$. This submodule does not change when $s$ is multilied by nonzero elements of $k$, therefore, assigning the submodule $sA_1$ to the element $s$ defines a mapping ${\mathbf  P}(A(\gamma)_{d-1})\to {\rm Grass}(2, A_d)$. It is not difficult to see that it is a morphism. Hence its image $Y(\gamma)$ is an irreducible closed subset in ${\rm Grass}(2, A_d)$, and
\begin{equation}\label{d2}
\dim (Y(\gamma)) \leqslant \dim ({\mathbf  P}(A(\gamma)_{d-1}))=\dim_k (A(\gamma)_{d-1})-1.
\end{equation}

In view of the finiteness of the set $[\chi]$, it follows from \eqref{condit}, \eqref{mmm}, \eqref{d1}, \eqref{d2} that
\begin{equation}\label{d3}
X\setminus \textstyle\bigcup_{\gamma\in [\chi]} Y(\gamma)
\end{equation}
is a nonempty subset of the Grassmannian ${\rm Grass}(2, A_d)$.

We consider a point of the set  \eqref{d3} and the two-dimensional linear subspace $L$ in $A_d$ corresponding to it. Then it follows from the above properties of the sets $X$ and $Y(\gamma)$ that
\begin{enumerate}[\hskip 2.2mm\rm (i)]
\item $L$ is a submodule of the
$k\widetilde G$-module $A_d$ isomorphic to $A_1$;
\item there are no $\gamma\in [\chi]$ and $s\in A(\gamma)_{d-1}$
such that $L=sA_1$.
\end{enumerate}

In view of (i) and Lemma \ref{exist}, there is a basis $\widetilde\varphi_1$,  $\widetilde\varphi_2$ of $L$ such that
\eqref{hp} is a polynomial homogeneous self-compression of the group
 $\widetilde G$ of degree $d$, for which
$\widetilde\varphi^*(A_1)=L$. It follows from (ii) and Lemma \ref{nontriv} that the descent {\rm\eqref{gd}} of this self-compression is nontrivial.
\end{proof}

\subsection{Proof of Theorem \ref{Ps}}\label{proof} The plan of the proof of Theorem \ref{Ps} is as follows. For each noncyclic finite subgroup $G$ of ${\rm PSL}_2=\Aut ({\bf P}^1)=\Cl$, we exlicitly describe for $\widetilde G$ the set $[\chi]$ and the Poincar\'e series
\begin{equation}\label{SPG}
P(\chi, t):=\sum_{n\geqslant 1}({\rm mult}_{\chi}(A_n))t^n,\quad
P(\gamma, t):=\sum_{n\geqslant 0} (\dim_k(A(\gamma)_n))t^n,\;\mbox{where $\gamma\in [\chi]$}.
\end{equation}
Comparing the coefficients of these series, we show that if a coefficient $s_d$ of the series \eqref{SG} is nonzero, then the inequality \eqref{condit} holds, from which, according to Lemma \ref{l4}, the statement of Theorem теоремы \ref{Ps} for $G$ follows. The case of a cyclic finite subgroup  $G$ is reduced to that of the corresponding dihedral one.

 \begin{proof}[Proof of Theorem {\rm \ref{Ps}}]
 We consider separately three possible types of the group $\widetilde G$.

\vskip 1mm

{\it {\rm (a)}
$\widetilde G$ is a primitive subgroup of the group ${\rm SL}_2$, i.e., a binary tetrahedral, octahed\-ral, or icasahedral group.}

    In view of \cite[3.2(a)]{Sp87} and the definition of the set $[\chi]$ (see \eqref{mchii}), in this case we have
    \begin{equation}\label{chipo}
    [\chi]=\{1\}.
    \end{equation}
   From \cite[4.2]{Sp87} we obtain:
\begin{equation}\label{tci1}
\begin{split}
P(\chi, t)&=\frac{t+t^{2a-1}+t^{4a-5}+t^{6a-7}}{(1-t^{2a})(1-t^{4a-4})},\\
P(1, t)&=\frac{1+t^{6a-6}}{(1-t^{2a})(1-t^{4a-4})},
\end{split}
\end{equation}
where $a=3, 4$, and $6$ respectively for binary tetrahedral, octahedral, and icosahedral group.
From \eqref{SPG}, \eqref{tci1} we then deduce the following:
\begin{equation}\label{decompbin1}
\begin{split}
P(\chi, t)-tP(1,t)
&=\sum_{n\geqslant 1}({\rm mult}_{\chi}(A_n)-\dim_k(A(1)_{n-1}))t^n\\
&=t^{2a-1}\frac{1+t^{2a-4}+t^{4a-6}-t^{4a-4}}{(1-t^{2a})(1-t^{4a-4})}\\
&=t^{2a-1}\frac{1+t^{4a-6}}{(1-t^{2a})(1-t^{4a-4})}+t^{4a-5}\frac{1}{1-t^{4a-4}}\\
&=t^{2a-1}(1+t^{4a-6})\sum_{n\geqslant 0}t^{2na}\sum_{n\geqslant 0}t^{(4a-4)n}+
t^{4a-5}\sum_{n\geqslant 0} t^{(4a-4)n}\\
&\overset{\eqref{SGPo}}{=\hskip -1mm=}S_G(t).
\end{split}
\end{equation}

From \eqref{decompbin1} and \eqref{SG} we obtain that
$s_d={\rm mult}_{\chi}(A_d)-\dim_k(A(1)_{d-1})$ for every $d>0$.
In view of \eqref{chipo}, this gives
\begin{equation}\label{sddd}
s_d= {\rm mult}_\chi (A_d)
-\max\{\dim_k (A(\gamma)_{d-1})\mid \gamma\in [\chi]\}\;\;\mbox{for every $d$}.
\end{equation}
As \eqref{SGPo} shows,  if
$s_d\neq 0$, then $s_d>0$. In view of \eqref{sddd}
and Lemma \ref{l4}, this implies the statement of Theorem \ref{Ps} in case (a).

 {\it {\rm (b)}
$\widetilde G$ is an irreducible imprimitive subgroup of the group ${\rm SL}_2$, i.e., a binary dihedral group of order $4\ell\geqslant 8$.}

In this case, the McKay correspondence \cite[Sect.\;2]{Sp87} juxtaposes to the group
$\widetilde G$ the extended Dynkin diagram of type
${\sf D}_{\ell+2}^{(1)}$ with $\ell+3$ vertices. According to \cite[2.3(a)]{Sp87}, the vertex juxtaposed to the character $\chi$
is a branch point of this diagram.
In ${\widetilde G}^\vee$ there are exactly four one-dimensional characters $1$, $\theta$, $\theta'$, $\theta''$  (see\;\cite[4.3]{Sp87}); the vertices of the diagram juxtaposed to them is the set of all its endpoints.\;In view of \cite[2.2]{Sp87} and \eqref{mchii},
an endpoint corresponds to a character from
$[\chi]$
if and only if it is connected by an edge to the vertex juxtaposed to the character $\chi$. Therefore, apart from $1$, there is at least one more character in $[\chi]$ (we denote it by $\theta$), and two possibilities occur:

---\;If $\ell\geqslant 3$, then the vertex juxtaposed to the character $\chi$ is connected by edges to only two endpoints of the diagram, which are juxtaposed to the one-dimensional characters $1$ and $\theta$:
\begin{equation}\label{dl11}
\hskip -48mm\begin{matrix}
\xymatrix@=4.5mm@M= -.4mm@R=3mm{ 
\circ\ar@{-}[dr]&&&&\circ
\\
&\circ\ar@{-}[r]^{\hskip -8.22mm \chi} & {\ \cdots \
}\ar@{-}[r]^{\hskip 6mm \alpha} & \circ\ar@{-}[ur] \ar@{-}[dr]
\\
\circ\ar@{-}[ur]&&&&\circ }
\end{matrix}\quad,\quad \chi\neq \alpha.
\hskip -21.3mm\raisebox{1.8\height}{\mbox{\tiny $\theta'$}}
\hskip -2.5mm\raisebox{-2.6\height}{\mbox{\tiny $\theta''$}}
\hskip -35.9mm\raisebox{-2.9\height}{\mbox{\tiny $\theta$}}
\hskip -1.5mm\raisebox{1.8\height}{\mbox{\tiny $1$}}
\end{equation}
Thus, we obtain
\begin{equation}\label{chidie3}
[\chi]=\{1, \theta\}\quad\mbox{for $\ell\geqslant 3$}.
\end{equation}

---\;If $\ell=2$, then the vertex juxtaposed to the character $\chi$ is connected by edges to all four endpoints, which are juxtaposed to the one-dimensional characters $1$, $\theta$, $\theta'$ и $\theta''$:
\begin{equation*}
\hskip -2mm\begin{matrix}
\xymatrix@C=3.1mm
@M= -.1mm
@R=3.1mm{
\circ\ar@{-}[dr]&&\circ
\\
&\circ
\ar@{-}[ur] \ar@{-}[dr]&
\\
\circ\ar@{-}[ur]&&\circ }
\end{matrix}
\quad {\hskip -8mm\mbox{\tiny $\chi$}}
\hskip -11.7mm\raisebox{2.1\height}{\mbox{\tiny $1\hskip 13mm \theta'$}}
\hskip -17.0mm\raisebox{-2.6\height}{\mbox{\tiny $\theta\hskip 13mm\theta''$}}\;.
\end{equation*}
Therefore, we obtain
\begin{equation}\label{chidie4}
[\chi]=\{1, \theta, \theta', \theta''\}\quad\mbox{for $\ell=2$}.
\end{equation}

It follows from \cite[(9)]{Sp87} that, for every $\ell\geqslant 2$, $\theta$ is the character denoted in \cite[p.\;103]{Sp87} by $c_1$. From this and
 \cite[4.4]{Sp87} we obtain:
\begin{equation}\label{di1}
\left.\begin{split}
P(\chi, t)&:=\frac{t+t^3+t^{2\ell-1}+t^{2\ell+1}}{(1-t^4)(1-t^{2\ell})},\\
P(1, t)&:=\frac{1+t^{2\ell+2}}{(1-t^4)(1-t^{2\ell})},\\
P(\theta, t)&=\frac{t^2+t^{2\ell}}{(1-t^4)(1-t^{2\ell})}.
\end{split}\right\}\quad\mbox{for every $\ell\geqslant 2$}.
\end{equation}
In addition, according to \cite[4.4]{Sp87},
\begin{equation}\label{3theta}
P(\theta, t)=P(\theta', t)=P(\theta'',t)\quad\mbox{for $\ell=2$}.
\end{equation}

From \eqref{SPG}, \eqref{di1} we obtain:
\begin{equation}\label{decompbin2}
\begin{split}
P(\chi, t)-tP(1,t)&-tP(\theta, t)\\
&=\sum_{n\geqslant 1}\big({\rm mult}_{\chi}(A_n)-\dim_k(A(1)_{n-1})-\dim_k(A(\theta)_{n-1})\big)t^n\\
&=t^{2\ell-1}\frac{1-t^4}{(1-t^4)(1-t^{2\ell})}
=\sum_{n\geqslant 0}t^{2\ell (n+1)-1}\\
&\overset{\eqref{SDPo}}{=\hskip -1mm=}S_G(t).
\end{split}
\end{equation}

It follows from \eqref{decompbin2} and \eqref{SG} that
$$s_d={\rm mult}_{\chi}(A_d)-\dim_k(A(1)_{d-1})-\dim_k(A(\theta)_{d-1})\quad
\mbox{for every $d>0$.}
$$
In view of \eqref{chidie3}, \eqref{chidie4},and \eqref{3theta}, this gives
\begin{equation}\label{sdd}
s_d\leqslant {\rm mult}_\chi (A_d)
-\max\{\dim_k (A(\gamma)_{d-1})\mid \gamma\in [\chi]\}\;\;\mbox{for every $d$}.
\end{equation}
As \eqref{SDPo} shows,
if $s_d\neq 0$, then $s_d> 0$. In view of \eqref{sdd} and Lemma  \ref{l4}, this implies the statement of Theorem
\ref{Ps} in case (b).

\vskip 1mm

 {\it {\rm (c)}
$\widetilde G$ is a cyclic subgroup of order
$2\ell\geqslant 4$ in the group ${\rm SL}_2$.}

Since  $\widetilde G$ is a subgroup of a binary dihedral group of order $4\ell$, every polynomial homogeneous self-compression of the latter group is a self-compression of the group
$\widetilde G$. Therefore, the statement of Theorem  \ref{Ps} for $\widetilde G$  follows from its statement, already proved, for binary dihedral groups.
 \end{proof}

 \subsection{Remarks about Theorems \ref{Ps} and \ref{Cr1}}\label{comment} Concluding the discussion of self-compressibility of finite subgroups of the Cremona group of rank 1, we will make a few remarks about Theorems  \ref{Ps} and \ref{Cr1}.

\vskip 1mm

(a) For every nontrivial finite subgroup $G$ of the Cremona group $\Cl$, Theorem \ref{Ps} yields an infinite set of natural numbers
$d$, for which there exists a polynomial homogeneous self-compression {\rm \eqref{hp}} of the corresponding binary group
 $\widetilde G$, whose degree is $d$ and descent
 {\rm \eqref{gd}} is a nontrivial self-compression of the group\;$G$. From formulas \eqref{SGPo}, \eqref{SDPo} we obtain the minimal of these $d$: it is equal to
 5, 7, and 11 respectively for the rotation group of a regular tetrahedron, octahedron, and icosahedron, and to
 $2\ell-1$ for the dihedral group of order $2\ell\geqslant 4$ and cyclic group of order $\ell\geqslant 2$.

\vskip 1mm

(b) Let $K$ be a field of algebraic functions in one variable over $k$.\;By Theorem \ref{Cr1}(i), if the genus of the field $K$ is equal to $0$, then every finite subgroup of ${\rm Aut}_k(K)$ is nontrivially self-compressible.\;For the fields $K$ of genus $\geqslant 2$, this, in general, is not so, see \cite[Ex.\;6]{Re04}, \cite{GA16}.

\vskip 1mm

(c)  Let us briefly explain how one can find explicitly the forms
 $\widetilde \varphi_1, \widetilde\varphi_2$, which define the homogeneous polynomial self-compressions \eqref{hp} of the group $\widetilde G$, whose degrees are specified in Theorem \ref{Ps}.

Let $d$ be such a degree.\;In view of what was said in part (c) of the proof of Theorem {\rm \ref{Ps}}, we can (and shall) assume that the group $\widetilde G$ is not cyclic, and hence the character $\chi$ is irreducible.\;Consider the linear space  ${\mathscr L}(A_1, A_d)$ of all linear maps $A_1\to A_d$. The group $\widetilde G$  acts linearly on it by the rule
\begin{equation}\label{line1}
(g\ell)(a):=g(\ell(g^{-1}(a))),\quad g\in \widetilde G,\; \ell\in{\mathscr L}(A_1, A_d),\; a\in A_1,
\end{equation}
and the $\widetilde G$-equivariant maps are precisely the fixed points of this action.\;They form in ${\mathscr L}(A_1, A_d)$ a linear subspace
${\mathscr L}(A_1, A_d)^G$, see \cite[3.12]{PV94}.\;The latter is the image of the Reynolds operator $|\widetilde G |^{-1}\sum_{g\in\widetilde G}g$ for the action \eqref{line1}, see \cite[3.4]{PV94},  and therefore, is described effectively.
If $\ell_1,\ldots, \ell_m$\,is a basis of
${\mathscr L}(A_1, A_d)^G$, then
$\langle \bigcup_{i=1}^m\ell_i(A_1)\rangle=A(\chi)_d$. Similarly, effectively are described the isotypic components  $A(\gamma)_{d-1}$ for all $\gamma\in[\chi]$.

According to the proof of Theorem \ref{Ps}, the set of all $(\alpha_1,\ldots,\alpha_m)\in k^m$ such that $L:=(\sum_{i=1}^m\alpha_i\ell_i)(A_1)$ does  not lie in $\langle A(\gamma)_{d-1}A_1\rangle$ for all $\gamma\in [\chi]$, is
nonempty. Effective finding of such a $(\alpha_1,\ldots,\alpha_m)$
reduces to finding for some explicitly de\-scribed nonzero polynomial in
 $m$ variables with coefficients in $k$
 any values of these variables that do not make this polynomial zero.

The linear space $L$ is a
$k\widetilde G$-submodule of  $A_d$ isomorphic to $A_1$.\;As a couple of forms $\widetilde \varphi_1, \widetilde\varphi_2$
one can now take a basis of this subspace such
that the matrices of the elements of the group $\widetilde G$
in this basis are the same as in the basis $x_1, x_2$ of the space  $A_1$ (it suffices to ensure this only for the system of generators of the group $\widetilde G$, containing two elements for dihedral group and three for the others, see \cite{Sp87}).
Effective finding of such a basis
reduces to finding a solution of a system of linear equations for the coefficients of the transition matrix, which satisfies an inequality equivalent to the nondegeneracy of this matrix.

(d) Theorem  \ref{Cr1} naturally leads to the question
 of whether its statements (i) and (ii) will remain true
 if  $\Cl$ isreplaced by
  $\Cp$ in them.\;As is shown below (see Theorem \ref{rdim1av}),
 concerning statement (ii)
 the answer is negative.
  Concerning the statement (i), at the time of this writing (September 2018) the author does not know the answer, and the following question seems to him to be of a principal importance:

\vskip 2.5mm

\noindent{\bf Qustion {\rm (\cite[Quest.\,1]{Po16})}.} {\rm
Is there an incompressible rational action of a finite group on
$\bA2$?
}

\vskip 2.5mm

In the case of a positive answer to this question, the problem of finding all incompressible actions in the list found in \cite{DI09}
naturally arises.

\vskip 2mm

\subsection{Self-compressing linear actions} The remaining results of Section \ref{2} are divided into two groups: one relates to the general case,   the other to the case of
$\Cp$. The following theorem applies to the general case.

\begin{theorem}\label{coli} Let $G$ be a finite subgroup of ${\rm GL}_n$, $n\geqslant 1$.
\begin{enumerate}[\hskip 4.2mm\rm(a)]
\item If $k[x_1,\ldots, x_n]_d^G\neq 0$, then $G$ admits
a polynomial homogeneous self-com\-pres\-sion
$\bAn\to \bAn$ of degree $d+1$.\;For $d\neq 0$, its is nontrivial.
\item If $|G|$ divides $d$, then $k[x_1,\ldots, x_n]_d^G\neq 0$.
\end{enumerate}
\end{theorem}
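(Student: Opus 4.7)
The plan is, for (a), to ``scale the identity morphism by an invariant'' and, for (b), to use the norm construction. For (a), given a nonzero $f\in k[x_1,\ldots,x_n]_d^G$, I would form the morphism $\widetilde\varphi\colon\bAn\to\bAn$ with components $\widetilde\varphi_i:=fx_i\in k[x_1,\ldots,x_n]_{d+1}$. The $G$-equivariance of $\widetilde\varphi$ is a direct consequence of the $G$-invariance of $f$: for $g\in G$ and $a\in\bAn$,
$$\widetilde\varphi(g\cdot a) = f(g\cdot a)(g\cdot a) = f(a)(g\cdot a) = g\cdot\widetilde\varphi(a).$$
Equivalently, multiplication by $f$ exhibits $f\cdot k[x_1,\ldots,x_n]_1$ as a $kG$-submodule of $k[x_1,\ldots,x_n]_{d+1}$ isomorphic to $k[x_1,\ldots,x_n]_1$, so condition $({\rm a}_1)$ of Lemma \ref{exist} holds. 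Algebraic independence of $fx_1,\ldots,fx_n$ over $k$ is immediate: any vanishing relation $P(fx_1,\ldots,fx_n)=0$ splits, by total degree in the $y$-variables, into equations $f^k P_k(x)=0$ of pairwise distinct $x$-degrees $k(d+1)$, forcing $P_k=0$ for every $k$ since $f\neq 0$. By Lemma \ref{exist}$({\rm b}_1)$ the morphism $\widetilde\varphi$ is therefore dominant, yielding a polynomial homogeneous self-compression of degree $d+1$.

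For nontriviality when $d>0$, I would observe that the hypersurface $\{f=0\}\subset\bAn$ is nonempty and is collapsed by $\widetilde\varphi$ to the origin $o$; hence $\widetilde\varphi^{-1}(o)$ strictly contains $\{o\}$, so $\widetilde\varphi$ cannot be a birational isomorphism and the self-compression is nontrivial. For (b), I would take the norm $f_0:=\prod_{g\in G}(g\cdot x_1)$, a product of $|G|$ nonzero linear forms in the integral domain $k[x_1,\ldots,x_n]$, hence a nonzero form of degree $|G|$. Reindexing gives the $G$-invariance: for $h\in G$,
$$h\cdot f_0 = \prod_{g\in G}\bigl((hg)\cdot x_1\bigr) = \prod_{g'\in G}(g'\cdot x_1) = f_0.$$
If $d=\ell|G|$, then $f_0^\ell\in k[x_1,\ldots,x_n]_d^G$ is the required nonzero invariant.

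No step in this argument is truly delicate; the only point demanding attention is isolating the degenerate case $d=0$ in (a), where the construction $\widetilde\varphi_i=fx_i$ becomes a homothety and the resulting self-compression is trivial, from the case $d>0$, where the positive-dimensional zero locus of $f$ supplies the required nonbirationality.
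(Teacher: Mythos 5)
Your construction in (a) is the same as the paper's (the map $a\mapsto f(a)a$), your equivariance and homogeneity checks agree with it, and your dominance argument (algebraic independence of $fx_1,\ldots,fx_n$ via the grading, then Lemma \ref{exist}$(\mathrm b_1)$) is a perfectly good variant of the paper's geometric argument. Part (b) is also essentially the paper's norm construction and is correct. The problem is the nontriviality step in (a). You argue: $\{f=0\}$ is collapsed to $o$, hence $\widetilde\varphi^{-1}(o)\supsetneq\{o\}$, hence $\widetilde\varphi$ is not a birational isomorphism. That last inference is false: a birational \emph{morphism} need only be injective on a dense open subset, and it can perfectly well contract a hypersurface to a point. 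For instance $(x,y)\mapsto(x,xy)$ is a birational morphism $\bA2\to\bA2$ (inverse $(u,v)\dashrightarrow(u,v/u)$) whose fiber over the origin is the whole line $\{x=0\}$. So a big fiber over $o$ is no obstruction to triviality of the compression. In addition, for $n=1$ your premise itself fails: there $\{f=0\}=\{o\}$, so $\widetilde\varphi^{-1}(o)=\{o\}$ and your argument says nothing, although $x\mapsto x^{d+1}$ is of course not birational for $d>0$.

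The fix is to use the homogeneity you already established, which is what the paper does (it computes the degree of $\varphi$ restricted to a line through $o$ and a general point, and compares with the generic injectivity of a birational map). Concretely: take $a$ with $f(a)\neq 0$, $a\neq o$; since $\widetilde\varphi(ta)=t^{d+1}\widetilde\varphi(a)$ and ${\rm char}\,k=0$, the $d+1$ distinct points $\zeta a$ with $\zeta^{d+1}=1$ all have the same image. Hence $\widetilde\varphi$ fails to be injective on every nonempty open subset of $\bAn$ when $d>0$, which contradicts its being a birational isomorphism. With this replacement your proof of (a) is complete; the rest of your argument stands.
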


\begin{proof}
(a) We take a nonzero polynomial $f\in
k[x_1,\ldots, x_n]_d^G$
and consider the mor\-phism
\begin{equation}\label{mor}
\varphi\colon \bAn\to \bAn,\quad a\mapsto f(a)a,
\end{equation}
In view of the   $G$-invariance of $f$ and the linearity of the action of $G$ on $\bAn$, for any
 $g\in G$ and $a\in\bAn$
we have
\begin{equation*}
\varphi(g\cdot a)
\overset{\eqref{mor}}{=\hskip -1mm=}
f(g\cdot a)(g\cdot a)=f(a)(g\cdot a)=
g\cdot \big(f(a)a\big)
\overset{\eqref{mor}}{=\hskip -1mm=}
g\cdot \big(\varphi(a)\big),
\end{equation*}
i.e., $\varphi$ is a $G$-equivariant morphism.
From \eqref{mor} and $f\in k[x_1,\ldots, x_n]_d$ we obtain
\begin{equation}\label{line}
\varphi(ta)=t^{d+1}f(a)a=t^{d+1}\varphi(a)\;\; \mbox{for any $a\in \bAn$, $t\in k$,}
\end{equation}
so $\varphi$ is a polynomial homogeneous map of degree $d+1$.

From \eqref{line} it follows that if a line $L$ in $\bAn$ contains $0$ and a point
$a\in U:=\{c\in \bAn\mid f(c)\neq 0\}$ different from $0$,
 then
\begin{enumerate}[\hskip 2.2mm\rm(i)]
\item $\varphi(L)=L$;
\item the degree of the morphism
$\varphi|_{L}\colon L\to L$ is equal to $d+1$.
\end{enumerate}
In view of (i), the image of  $\varphi$ contains a set $U$ open in $\bAn$, therefore, $\varphi$ is dominant, hence is a self-compression of the group $G$.

Suppose that $\varphi$ is a birational isomorphism.\;Then
the restriction of  $\varphi$ to some nonempty open subset $U'$ of $\bAn$ is injective.
Since $\bAn$ is irreducible, $U\cap U'\neq \varnothing$. Let $a\in U\cap U'$.
Then, in the previous notation, the degree of the morphism $\varphi|_{L}$  is equal to $1$ in view of
its injectivity
on the subset $L\cap U'$, which is open in $L$.
From\;(ii)
we then obtain $d=0$, which completes the proof of\;(a).

(b) The kernel of the natural action of $G$ on $k[x_1,\ldots, x_n]_1$ is trivial.\;Therefore there is a nonzero linear form $\ell\in  k[x_1,\ldots, x_n]_1$ such that its $G$-orbit contains exactly $|G|$ elements.\;Therefore, $\big(\prod_{g\in G}g\cdot \ell\big)^{\hskip -.5mm s}$ is a nonzero $G$-invariant form of degree $s|G|$ for any integer
$s\geqslant 0$, which proves\;(b).
\end{proof}

\begin{corollary}\label{cccco}
Every finite subgroup of $\Cn$, which is conjugate to a subgroup of the group ${\rm GL}_n$, is nontrivially self-compressible.
\end{corollary}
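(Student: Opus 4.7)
The plan is to reduce to the case $G\subseteq \GL_n$ and then chain together the two parts of Theorem \ref{coli}. First, I would observe that the property of being nontrivially self-compressible is invariant under conjugation in $\Cn$: if $G_1=hG_2h^{-1}$ for some $h\in\Cn$, then any $G_2$-equivariant dominant self-map $\varphi\colon \bAn\dashrightarrow\bAn$ yields the $G_1$-equivariant dominant self-map $h\circ\varphi\circ h^{-1}$, and $\varphi$ is a birational isomorphism if and only if $h\varphi h^{-1}$ is. Hence it suffices to prove the statement for a finite subgroup $G\subseteq\GL_n$ itself.

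Next, I would set $d:=|G|$. Since $|G|$ divides $d$, Theorem \ref{coli}(b) produces a nonzero $G$-invariant form $f\in k[x_1,\ldots,x_n]_d^G$. Applying Theorem \ref{coli}(a) to this $f$ yields a polynomial homogeneous self-compression $\varphi\colon\bAn\to\bAn$ of $G$ of degree $d+1$; since $d=|G|\geqslant 1\neq 0$, the final clause of Theorem \ref{coli}(a) asserts that $\varphi$ is nontrivial, proving the corollary.

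There is essentially no obstacle beyond bookkeeping: the corollary is a direct assembly of Theorem \ref{coli}(a) and (b). The only point requiring a sentence of justification is the conjugation-invariance reduction, which is why the hypothesis is ``conjugate to a subgroup of $\GL_n$'' rather than ``contained in $\GL_n$''.
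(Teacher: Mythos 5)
Your proposal is correct and follows exactly the route the paper intends: the corollary is stated without proof as an immediate consequence of Theorem \ref{coli}, namely taking $d=|G|$ in part (b) to get a nonzero invariant form and then applying part (a) to get a nontrivial self-compression of degree $d+1$, with the conjugation step transporting the self-compression from $\GL_n$ to the conjugate subgroup. Your extra sentence justifying conjugation-invariance is the only bookkeeping the paper leaves implicit, and it is handled correctly.
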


\subsection{Compressing actions with a fixed point}\label{fabe}
The following result is an application of Theorem \ref{coli}.

\begin{theorem}\label{fipo} Every {\rm(}faithful\,{\rm)} rational action $\varrho$ of a finite group $G$ on an $n$-dimen\-sional irreducible variety, which has a fixed point, is obtained by a nontrivial base change from a {\rm(}faithful\,{\rm)} linear action of the group $G$ on an $n$-dimensional linear space.
\end{theorem}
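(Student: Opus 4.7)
The plan is to construct a faithful linear action of $G$ on an $n$-dimensional $k$-vector space $V$ together with a $G$-equivariant dominant rational map $\widetilde\varphi\colon X\dashrightarrow V$ that is not a birational isomorphism. Such a $\widetilde\varphi$ is by definition a nontrivial compression exhibiting $\varrho$ as obtained by nontrivial base change from the linear action of $G$ on $V$.

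First, invoke the fixed-point hypothesis to replace the variety of $\varrho$ by a regularization: a smooth complete irreducible $n$-dimensional variety $Y$ on which $G$ acts regularly, birationally $G$-equivariantly isomorphic to $X$, with a $G$-fixed point $y_0\in Y$. Set $V:={\rm T}_{y_0, Y}$, with the linear $G$-action induced by the differentials $d_{y_0}g$, $g\in G$; then $\dim V=n$. To see this representation is faithful, use the classical formal linearization of a finite group action at a fixed point (available since ${\rm char}\,k=0$): there is a $G$-equivariant algebra isomorphism $\widehat{\mathcal O}_{Y, y_0}\cong k[[V^*]]$ intertwining the completed action with the natural linear action on power series. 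Thus any $g\in G$ with $d_{y_0}g={\rm id}_V$ acts trivially on $\widehat{\mathcal O}_{Y, y_0}$, hence on $\mathcal O_{Y, y_0}$, and by irreducibility on all of $Y$, contradicting faithfulness of the regular action.

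Next, construct an auxiliary $G$-equivariant dominant rational map $\varphi\colon Y\dashrightarrow V$. Let $\mathfrak m\subset \mathcal O_{Y, y_0}$ be the maximal ideal; there is a canonical $G$-equivariant isomorphism $V^*\cong \mathfrak m/\mathfrak m^2$. Starting from any finite-dimensional $k$-subspace of $\mathfrak m$ mapping onto $\mathfrak m/\mathfrak m^2$, replace it by its $G$-span to obtain a finite-dimensional $G$-stable subspace with the same projection, and split that surjection $G$-equivariantly by Maschke's theorem, which applies since $G$ is finite and ${\rm char}\,k=0$. This produces a $G$-equivariant $k$-linear section $s\colon V^*\hookrightarrow \mathfrak m\subset k(Y)$. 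Picking a basis $x_1,\dots,x_n$ of $V^*$ and setting $f_i:=s(x_i)$, define
\begin{equation*}
\varphi:=(f_1,\dots,f_n)\colon Y\dashrightarrow V,
\end{equation*}
where $V$ is identified with $\bAn$ via the basis of $V$ dual to $x_1,\ldots,x_n$. Then $\varphi$ is $G$-equivariant by construction, regular at $y_0$ with $\varphi(y_0)=0$, and its differential $d_{y_0}\varphi\colon V\to V$ is the identity by the choice of the $f_i$. In particular $\varphi$ is \'etale at $y_0$, hence dominant.

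Finally, secure nontriviality by postcomposition. By Theorem \ref{coli}(b), $k[x_1,\dots,x_n]_{|G|}^G\neq 0$, so Theorem \ref{coli}(a) supplies a $G$-equivariant polynomial homogeneous self-compression $\psi\colon V\to V$ of degree $|G|+1$ which, since $|G|\neq 0$, is nontrivial. The composition $\widetilde\varphi:=\psi\circ\varphi\colon Y\dashrightarrow V$ is $G$-equivariant and dominant, and the multiplicativity of degree along the tower $k(V)\xrightarrow{\psi^*} k(V)\xrightarrow{\varphi^*} k(Y)$ gives $\deg(\widetilde\varphi)=\deg(\psi)\cdot\deg(\varphi)\geqslant 2$, so $\widetilde\varphi$ is not a birational isomorphism. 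Thus $\widetilde\varphi$ is the desired nontrivial compression of $\varrho$ onto the faithful linear action on $V$. The principal obstacle in this plan is the $G$-equivariant lifting $V^*\hookrightarrow \mathfrak m$: here Maschke's theorem (and hence ${\rm char}\,k=0$) is indispensable, just as it is for the formal linearization used to establish faithfulness of the tangent representation; all remaining steps — dominance via the \'etale calculation at $y_0$ and nontriviality via Theorem \ref{coli} — are routine.
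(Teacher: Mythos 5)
Your proof is correct and follows essentially the same route as the paper's: regularize the action at a fixed point, pass to the faithful tangent representation at that point, produce a $G$-equivariant dominant rational map to the tangent space, and postcompose with the nontrivial polynomial homogeneous self-compression supplied by Theorem \ref{coli}. The only difference is one of packaging: where the paper cites \cite{Po14_1} (Lem.~4) for faithfulness of the tangent action and Lemire--Popov--Reichstein (Lem.~10.3) for the equivariant dominant morphism to ${\rm T}_{y,U}$, you reprove both inline, via formal linearization at the fixed point and a Maschke-split $G$-equivariant section of $\mathfrak m\twoheadrightarrow\mathfrak m/\mathfrak m^2$, respectively.
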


\begin{proof} Let $Y$ be an irreducible smooth complete variety and let $G\hookrightarrow {\rm Aut}(Y)$ be a regularization of the action $\varrho$, such that
$Y^G\neq \varnothing$. Let $y\in Y^G$.\;We consider a nonempty open affine subset $U$ of $Y$, containing
$y$. Since $\bigcap_{g\in G}g\cdot U$ is a $G$-stable open affine subset, which contains $y$, replacing $U$ by it, we can (and shall) assume that $U$
is $G$-stable. Since $U$ is dense in $Y$, the action of $G$ on $U$ is faithful.

Let ${\rm T}_{y, U}$ be the tangent space to $U$ at the point $y$. The tangent action
\begin{equation*}
\tau\colon G\to {\rm GL}({\rm T}_{y, U})\subset {\rm Bir}({\rm T}_{y, U})
\end{equation*}
of the group $G$ on the space ${\rm T}_{y, U}$ is faithful \cite[Lem.\;4]{Po14_1}.\;According to \cite[Lem. 10.3]{LЗК06}, there is a
$G$-equivariant dominant morphism
$\alpha\colon U\to  {\rm T}_{y, U}$.\;In view of Theorem \ref{coli},
the linearity of the action
$\tau$ implies the existence of a nontrivial self-compression
$\beta\colon {\rm T}_{y, U}\to {\rm T}_{y, U}$
of the group $G$
(so that\;$\deg (\beta)>1$).\;Then $\beta\circ\alpha\colon Y\!\dashrightarrow\! {\rm T}_{y, U}$
is a nontrivial (because $\deg (\beta\circ\alpha)=\deg(\beta)\deg(\alpha)\!>\!1$)
self-compression of the action\;$\varrho$.
\end{proof}

\begin{corollary}
Every incompressible rational action of a finite group on an irre\-ducible variety has no fixed points.
\end{corollary}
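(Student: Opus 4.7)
The plan is to deduce the corollary immediately from Theorem \ref{fipo} by contraposition. Recall from Subsection \ref{deff} that an action $\varrho_1$ is called incompressible precisely when no rational action $\varrho_2$ can be obtained from $\varrho_1$ by a nontrivial compression. Equivalently, the contrapositive statement ``if $\varrho$ has a fixed point, then $\varrho$ is nontrivially compressible'' is what I need to establish.

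Concretely, let $\varrho\colon G\hookrightarrow {\rm Bir}(X)$ be a faithful rational action of a finite group on an irreducible variety $X$ of dimension $n$, and suppose $\varrho$ has a fixed point in the sense of Subsection \ref{deff} (i.e., some regularization $\rho$ on a smooth complete $Y$ satisfies $Y^G\neq\varnothing$). Theorem \ref{fipo} then furnishes a faithful linear action $\tau\colon G\to {\rm GL}(V)$ on an $n$-dimensional linear space $V$, together with a nontrivial dominant $G$-equivariant rational map $\psi\colon X\dashrightarrow V$ such that $\varrho$ is obtained from $\tau$ by the base change induced by $\psi$. By the very terminology introduced in Subsection \ref{deff}, this precisely means that $\tau$ is obtained from $\varrho$ via the nontrivial compression $\psi$, so $\varrho$ is nontrivially compressible.

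Consequently, if $\varrho$ is incompressible, then the above conclusion fails, and $\varrho$ cannot have any fixed point. This completes the contrapositive argument.

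There is no real obstacle here: the entire content is packaged inside Theorem \ref{fipo}, and the corollary is just the logical restatement of that theorem together with the definition of incompressibility. The only thing one must verify is that the notions of ``having a fixed point'' and ``being obtained by a nontrivial base change from a linear action'' match verbatim between the hypothesis of Theorem \ref{fipo} and the negation of incompressibility, which they do by the definitions set up in Subsection \ref{deff}.
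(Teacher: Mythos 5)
Your argument is correct and is exactly the paper's intended reasoning: the corollary is stated as an immediate consequence of Theorem \ref{fipo}, and your contrapositive reading (fixed point $\Rightarrow$ nontrivial compression into a linear action $\Rightarrow$ nontrivially compressible) is precisely how the definitions in Subsection \ref{deff} make it follow. Nothing is missing.
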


\begin{remark} {\rm In \cite{DD16} all rational actions of finite groups on $\bA2$, having a fixed point, are found.\;There are quite a few of them.\;By Theorem \ref{fipo} they all are obtained by nontrivial base changes from the linear actions on
 $\bA2$ (the classification of which has long been known,
see, e.g., \cite{NPT08}).}
\end{remark}

Recall that every finite Abelian group $G$ decomposes into a direct sum of cyclic
subgroups of orders $m_1,\ldots, m_r$, where $m_i$ divides $m_{i+1}$ for $i=1,\ldots, r-1$,
and $m_1>1$ for $|G|>1$.\;The sequence  $m_1,\ldots, m_r$ is uniquely determined by $G$ and  called
{\it the sequence of invariant factors of the group} $G$.\;The integer
$r$ is called its {\it rank}; the latter is equal to the minimal number of generators of the group
$G$.

For every integer $n\geqslant r$, we distinguish in
${\rm GL}_n\subset \Cn$ the following subgroup isomorphic to $G$:
\begin{equation}\label{D}
 T_n(m_1,\ldots, m_r)\!:=\!\{
 (t_1x_1,\ldots, t_rx_r, x_{r+1},\ldots, x_n)
 \mid
 t_i\!\in\! k,\, t_i^{m_i}=1, 1\leqslant i\leqslant r\}.
 \end{equation}

 \begin{theorem}\label{afipo}
  Let $G$ be a finite Abelian group with the sequence of invariant factors
 $m_1,\ldots, m_r$.\;If a
 {\rm(}faithful\,{\rm)} rational action  $\varrho$ of the group
 $G$ on an $n$-dimen\-sional irreducible variety has a fixed point, then $n\geqslant r$ and $\varrho$ is obtained by a nontrivial base change from a linear action $\lambda\colon G\!\hookrightarrow\! \GL_n(k)$ on $\bAn$, such that
 $\lambda(G)\!=\!
T_n(m_1,\ldots, m_r)$.
 \end{theorem}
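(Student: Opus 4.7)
The plan is to bootstrap from Theorem~\ref{fipo}, which already supplies a faithful linear action $\mu\colon G\hookrightarrow\GL_n(k)$ on $\bAn$ such that $\varrho$ is obtained from $\mu$ by a nontrivial base change. It will therefore suffice to exhibit a $G$-equivariant birational isomorphism between $(\bAn,\mu)$ and a linear action $\lambda$ on $\bAn$ with $\lambda(G)=T_n(m_1,\ldots,m_r)$; composing the existing compression with this isomorphism preserves nontriviality and yields the required base change from $\lambda$.

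To build $\lambda$ I would first diagonalize $\mu$. Since $G$ is finite abelian and $k=\overline k$ has characteristic zero, $\mu(G)$ is simultaneously diagonalizable, so after a linear coordinate change one may assume the action is diagonal with $\mu(g)^{*}(x_i)=\eta_i(g)x_i$ for characters $\eta_1,\ldots,\eta_n\in G^\vee$. Faithfulness of $\mu$ is equivalent to surjectivity of the homomorphism $\mu^{*}\colon\mathbb{Z}^n\twoheadrightarrow G^\vee$, $e_i\mapsto\eta_i$; since $G^\vee$ has rank $r$, this already forces $n\geqslant r$, giving the first claim.

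The heart of the argument is Smith normal form applied to $\mu^{*}$: it yields a $\mathbb{Z}$-basis $f_1,\ldots,f_n$ of $\mathbb{Z}^n$ and a decomposition $G^\vee=\bigoplus_{i=1}^{r}\langle\xi_i\rangle$ with $|\xi_i|=m_i$ such that $\mu^{*}(f_i)=\xi_i$ for $i\leqslant r$ and $\mu^{*}(f_i)=0$ for $i>r$. Writing the transition matrix as $A=(a_{ij})\in\GL_n(\mathbb{Z})$, the monomial rule $\psi^{*}(x_i)=\prod_j x_j^{a_{ij}}$ defines a birational self-map $\psi$ of $\bAn$ (regular on the open torus, invertible because $A^{-1}\in\GL_n(\mathbb{Z})$) which, by a direct computation using $\mu(g)^{*}x_j=\eta_j(g)x_j$, intertwines $\mu$ with the diagonal action $\mu'$ of characters $\xi_1,\ldots,\xi_r,0,\ldots,0$. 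Since $\xi_1,\ldots,\xi_r$ generate $G^\vee$, the evaluation map $g\mapsto(\xi_1(g),\ldots,\xi_r(g))$ is an injective, hence by cardinality bijective, homomorphism $G\xrightarrow{\sim}\mu_{m_1}\times\cdots\times\mu_{m_r}$, and consequently $\mu'(G)$ coincides with $T_n(m_1,\ldots,m_r)$ as subsets of $\GL_n(k)$. Setting $\lambda:=\mu'$ completes the argument.

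The main obstacle I anticipate lies in translating the purely algebraic Smith normal form into geometry: one must verify that the abstract basis change of $\mathbb{Z}^n$ really does lift to a $G$-equivariant birational automorphism of $\bAn$ and, crucially, that the image of the transformed action is not merely conjugate to $T_n(m_1,\ldots,m_r)$ but equal to it as a specific subset of $\GL_n(k)$. Both points reduce to explicit bookkeeping once the Smith-normal-form data are in hand.
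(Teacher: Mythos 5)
Your proposal is correct, and its overall architecture coincides with the paper's: both arguments bootstrap from the fixed point through the faithful linear (tangent) action supplied by Theorem \ref{fipo}, and then transport that linear action to one whose image is exactly $T_n(m_1,\ldots,m_r)$ by a birational ($G$-equivariant) change of coordinates, noting that post-composing the nontrivial compression with a birational isomorphism keeps it nontrivial. The difference lies in how the transport step is justified. The paper simply invokes \cite[Thm.\;1]{Po13_2}, which says that every finite Abelian subgroup of ${\rm Aff}_n$ with invariant factors $m_1,\ldots,m_r$ is conjugate inside $\Cn$ to $T_n(m_1,\ldots,m_r)$, and then repeats the argument of Theorem \ref{fipo} (the dominant morphism to the tangent space followed by the nontrivial self-compression from Theorem \ref{coli}). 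You instead reprove the special case you need in a self-contained way: simultaneous diagonalization of $\mu(G)$ (finite Abelian, ${\rm char}\,k=0$, $k=\overline k$), Smith normal form for the surjection $\mathbb{Z}^n\twoheadrightarrow G^\vee$, and a monomial Cremona transformation with exponent matrix in $\GL_n(\mathbb{Z})$ intertwining the two diagonal actions; the identification $\mu'(G)=T_n(m_1,\ldots,m_r)$ as an actual subset then follows from injectivity plus the cardinality count $|G|=m_1\cdots m_r$, exactly as you say. Your route also yields the inequality $n\geqslant r$ explicitly (surjectivity of $\mathbb{Z}^n\to G^\vee$ forces $n\geqslant{\rm rank}(G^\vee)=r$), where the paper leaves it implicit in the faithfulness of the tangent representation. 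What the paper's citation buys is brevity and a statement valid for arbitrary finite Abelian subgroups of ${\rm Aff}_n$; what your argument buys is independence from that reference, at the cost of the (harmless) bookkeeping you anticipated, which your intertwining computation indeed settles.
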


 \begin{proof} We use the notation from the proof of Theorem \ref{fipo}.\;Fixing an isomorphism of the space ${\rm T}_{y, U}$ with $\bAn$,
  we identify the group ${\rm Bir}({\rm T}_{y, U})$ with $\Cn$.\;Since the groups $G$ and
  $\tau (G)$ are isomorphic, they have the same invariant factors. According to
 \cite[Thm.\;1]{Po13_2}, every finite Abelian subgroup of ${\rm Aff}_n$, whose invariant factors are
   $m_1,\ldots, m_r$, is transformed to the group
 $T_n(m_1,\ldots, m_r)$ by means of conjugation in the group $\Cn$.\;Hence this is true for the subgroup $\tau (G)$. From here,
  arguing as in the proof of Theorem \ref{fipo}, we get the statement to be proved.
 \end{proof}

 \subsection{Compressing actions of cyclic groups} According to \cite[Thm.\;A]{Bl06}, the sets of conjugacy classes of cyclic subgroups of $\Cp$ of some fixed orders
$d<\infty$ are infininite and even there are parameter-dependent families of such classes (this is the case if $d$ is even, $d/2$ is odd).\;The following theorem
(a special case of which is proved in \cite[Ex.\;5]{Re04})
implies that all these subgroups are obtained by the base changes from a single such subgroup.

 \begin{theorem}\label{rdim1av}
 Let $n, m, d$ be any positive integers, and $n\geqslant m$.\;Every {\rm(}faithful\,{\rm)} rational action $\varrho$
of a finite cyclic group $G$ of order $d$
on an $n$-dimensional irreducible variety $X$
is obtained by a nontrivial base change from a linear action
$\lambda\colon G\hookrightarrow \GL_m(k)$ on $\bAm$
such that
$\lambda(G)=T_m(d)$.
 \end{theorem}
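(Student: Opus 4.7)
The plan is to construct a $G$-equivariant dominant rational map $\varphi\colon X\dashrightarrow\bAm$ into the $T_m(d)$-action on $\bAm$ by means of Kummer theory, and then, if needed, to compose it with a nontrivial self-compression of the target supplied by Corollary~\ref{cccco}, thereby guaranteeing nontriviality.

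First I would translate the rational action $\varrho$ into Galois-theoretic data: $G$ acts faithfully on $k(X)$ by $k$-auto\-mor\-phisms, the fixed field $k(X)^G$ has transcendence degree $n$ over $k$, and $k(X)/k(X)^G$ is cyclic Galois of degree $d$. Since ${\rm char}\,k=0$ and $k=\overline k$ contains a primitive $d$-th root of unity $\zeta$, Kummer theory furnishes an element $y\in k(X)$ satisfying $g\cdot y=\zeta y$ for a fixed generator $g$ of $G$, together with $y^d=f\in k(X)^G$. The element $f$ is automatically transcendental over $k$, since otherwise $f\in k$ would force $y\in k$ (as $k=\overline k$ and $y$ would be a root of $T^d-f$), contradicting $g\cdot y=\zeta y$ with $\zeta\neq 1$. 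Using that $k(X)^G$ has transcendence degree $n\geqslant m$ over $k$, I then extend $\{f\}$ to a transcendence basis of $k(X)^G/k$ and select $h_2,\ldots,h_m\in k(X)^G$ so that $f,h_2,\ldots,h_m$\,---\,equivalently $y,h_2,\ldots,h_m$\,---\,are algebraically independent over $k$. Setting $\varphi^*(x_1):=y$ and $\varphi^*(x_i):=h_i$ for $i\geqslant 2$ defines a rational map $\varphi\colon X\dashrightarrow\bAm$; its dominance is equivalent to the algebraic independence of $\varphi^*(x_1),\ldots,\varphi^*(x_m)$, which holds by construction. Its $G$-equivariance with respect to the $T_m(d)$-action $(a_1,\ldots,a_m)\mapsto(\zeta a_1, a_2,\ldots,a_m)$ on $\bAm$ is a direct check via \eqref{string}: $g$ fixes each $h_i$ and multiplies $y$ by $\zeta$.

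Finally, by Corollary~\ref{cccco} the linear subgroup $T_m(d)\subset\GL_m\subset\Cm$ admits a nontrivial self-compression $\psi\colon\bAm\dashrightarrow\bAm$. The composite $\psi\circ\varphi\colon X\dashrightarrow\bAm$ is again a $G$-equivariant dominant rational map into the $T_m(d)$-action, and it is nontrivial: since $\varphi^*$ is injective and $\psi^*(k(\bAm))\subsetneq k(\bAm)$, one obtains $(\psi\circ\varphi)^*(k(\bAm))=\varphi^*(\psi^*(k(\bAm)))\subsetneq\varphi^*(k(\bAm))\subseteq k(X)$, so $\psi\circ\varphi$ is not birational. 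The step that might appear delicate is the invocation of Kummer theory in the rational rather than regular setting, but since a faithful rational action of $G$ on $X$ is by definition a faithful action of $G$ on $k(X)$ by $k$-automorphisms, the standard Kummer argument applies verbatim; the rest of the proof is essentially formal bookkeeping.
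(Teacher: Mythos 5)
Your argument is correct and essentially the paper's own: the Kummer-theoretic eigenfunction $y$ is exactly the paper's choice of a nonzero element of $K_{\chi_0}$ for a generator $\chi_0$ of $G^\vee$, your invariants $h_2,\ldots,h_m$ play the role of the paper's transcendence basis of $K^G$ (the paper first produces a compression to $\bAn$ and then projects to $\bAm$, while you map directly to $\bAm$), and both proofs secure nontriviality by composing with the nontrivial self-compression of the linear action supplied by Corollary~\ref{cccco}. The only point to patch is the degenerate case $d=1$, where $\zeta=1$ and your transcendence argument for $f$ collapses; there one simply chooses $f\notin k$, which is possible since $k(X)\neq k$, exactly as the paper remarks.
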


 \begin{proof}\label{cyav}
Let $G^{\vee}$ be the group of all homomorphisms  $G\to k^\times$, and put
$K:=k(X)$. If $\chi\in G^{\vee}$, then
 \begin{equation}\label{eis}
 K_\chi:=\{f\in K\mid g\cdot f=\chi(g)f \;\mbox{for all $g\in G$}\}
 \end{equation}
 is a linear subspace of the linear space  $K$ over $k$.\;Since $\dim(\langle G\cdot f\rangle)<\infty$ and $G$ is Abelian, we have  $K=\bigoplus_{\chi\in G^{\vee}} K_{\chi}$, and \eqref{eis} implies that
 $\{\chi\in G^{\vee}\mid K_\chi\neq 0\}$ is a subgroup of
$G^{\vee}$. Since $G^{\vee}$ is a cyclic group of order $d$ and the action of $G$ on $K$ is faithful, this subgroup coincides with\;$G^\vee$.

Let $\chi_0^{\ }$ be a generator of  $G^{\vee}$ and let
 $f\in K_{\chi_0^{\ }}$, $f\neq 0$. We can (and shall) assume that  $f\notin k$ (and hence $f$ is transcendental over $k$):  if $d>1$, this holds automatically; if $d=1$, this follows from
 $K\neq k$.

  Since $K/K^G$ is a finite field extension,  ${\rm trdeg}_kK^G={\rm trdeg}_kK=n$. Let $h_1,\ldots, h_n\in K^G$ be a transcendence basis of the field $K^G$ over $k$.
  Then ${\rm trdeg}_kk(f, h_1,\ldots, h_n)=n$. Since $f$ is transcendental over $k$, this and \cite[Chap.\;X, \S1, Thm.\;1]{La65} imply that after possibly renumbering elements $h_1,\ldots, h_n$ the subfield
 $L:= k(f, h_1,\ldots, h_{n-1})$ of $K$
  is a purely transcendental extension of the filed  $k$ of degree $n$. The construction implies the $G$-invariance of $L$ and the faithfulness of the action of
$G$ on $L$. It follows from the definition of $\chi_0$ that the linear action  $\mu\colon G\hookrightarrow \GL_n(k)$ on $\bAn$, given by the formula $g\cdot (a_1,\ldots, a_n):=(\chi_0^{\ }(g^{-1})a_1, a_2,\ldots, a_n)$, is faithful, $\mu(G)=T_n(d)$, and the map  $\varphi\colon X \dashrightarrow\bAn$ is a contraction of the action  $\varrho$ into the action $\mu$. Since, in turn, $\mu$ is compressed into a linear action of
the group $G$ on $\bAm$ by means of the projection$\An\to \Am$, $(a_1,\ldots, a_n)\mapsto (a_1,\ldots, a_m)$, the assertion being proved now follows from  Corollary \ref{cccco}.
 \end{proof}

 \subsection{Auxiliary statement:\;embeddings of \boldmath$G$-modules into coordinate al\-geb\-ras} In what follows we shall need the following general statement:

\begin{lemma}\label{lemma} If a finite group $G$ acts regularly  {\rm(}and faithfully\,{\rm)} on an irreducible affine variety $X$, then every finite-dimensional $\,kG$-module $M$ is isomorphic to a submodule of the $kG$-module $k[X]$.
\end{lemma}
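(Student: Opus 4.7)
The main idea is to reduce the statement to embedding sufficiently many copies of the regular representation $kG$ into $k[X]$, and then to produce those copies by pulling back functions from disjoint free $G$-orbits. Since we are working over $k=\overline k$ of characteristic zero, $kG$ is semisimple, every irreducible $kG$-module $V$ occurs in $kG$ with multiplicity $\dim V\geqslant 1$, and hence any finite-dimensional $M$ embeds into $(kG)^{\oplus N}$ as soon as $N\geqslant \dim M$. The problem therefore reduces to producing, for every positive integer $N$, a $kG$-submodule of $k[X]$ isomorphic to $(kG)^{\oplus N}$.

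For each $g\in G\setminus\{1\}$ the fixed locus $X^g$ is closed in $X$, and by faithfulness combined with irreducibility of $X$ it is a \emph{proper} closed subset. Consequently, the set $X_{\rm free}:=X\setminus \bigcup_{g\neq 1} X^g$ of points with trivial stabilizer is Zariski open and nonempty; moreover, if $G$ is nontrivial then $X$ is necessarily infinite (otherwise the lemma is vacuous). I then pick points $x_1,\ldots, x_N\in X_{\rm free}$ inductively so that the orbits $G\cdot x_1,\ldots, G\cdot x_N$ are pairwise disjoint: at each step, $X_{\rm free}$ minus the finite union of the previously constructed orbits is still a nonempty open subset of the irreducible variety $X$. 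The resulting finite reduced $G$-stable subvariety $Y:=\bigsqcup_{i=1}^N G\cdot x_i$ then has exactly $N|G|$ points.

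The restriction homomorphism $k[X]\twoheadrightarrow k[Y]$ is $G$-equivariant and surjective, and as a $kG$-module $k[Y]=\bigoplus_{i=1}^N k[G\cdot x_i]$, each summand being isomorphic to the regular representation $kG$ (the standard identification of the algebra of functions on a free $G$-set). By Maschke's theorem the surjection splits $G$-equivariantly, exhibiting $(kG)^{\oplus N}$ as a submodule of $k[X]$; composing with the embedding $M\hookrightarrow (kG)^{\oplus N}$ noted at the outset concludes the argument. The only step requiring a careful argument is the existence of points with trivial stabilizer, which is precisely where the faithfulness of the action and the irreducibility of $X$ are used essentially: without both hypotheses, the fixed loci $X^g$ could collectively cover $X$ and no free orbit would exist.
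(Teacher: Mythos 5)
Your argument is correct, and it shares with the paper's proof its key geometric ingredient---the existence of points with trivial stabilizer, obtained exactly as you obtain it from the closedness of the fixed loci $X^g$, the faithfulness of the action, and the irreducibility of $X$---but it amplifies the multiplicity in a genuinely different way. The paper restricts to a \emph{single} free closed orbit $G\cdot x$, uses Frobenius duality to see that every simple $kG$-module $S$ occurs in $k[G\cdot x]$ (with multiplicity $\dim S$), and then boosts this nonvanishing to \emph{infinite} multiplicity of $S$ in $k[X]$ by observing that each isotypic component is a module over $k[X]^G$, that $\dim_k k[X]^G=\infty$ (via ${\rm tr\,deg}_k\,k(X)^G=\dim X$ and the fraction-field property), and that $k[X]$ has no zero divisors. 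You instead take $N\geqslant \dim M$ pairwise disjoint free orbits, identify the functions on their union with $(kG)^{\oplus N}$, split the $G$-equivariant restriction map (by complete reducibility, e.g.\ averaging a linear section), and embed $M$ into $(kG)^{\oplus N}$. Your route is more elementary: it avoids the invariant-theoretic inputs (the transcendence-degree count and $k(X)^G$ being the field of fractions of $k[X]^G$) and the Frobenius-reciprocity computation, at the small cost of the inductive choice of disjoint orbits; the paper's route needs only one free orbit and yields the slightly stronger conclusion that every isotypic component of $k[X]$ is infinite-dimensional. One small remark: your parenthetical dismissal of the degenerate case is imprecise when $G$ is trivial---there you still need $X$ infinite (equivalently $\dim X>0$), which is the same harmless assumption the paper makes explicitly at the start of its proof.
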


\begin{proof} We can (and shall) assume that $\dim (X)>0$.\;Since ${\rm tr\,deg}_k (k(X)^G)=\dim (X)\break -\dim (G)=\dim (X)$ (see\;\cite[Sect.\;2.3,\;Cor.]{PV94}), and $k(X)^G$ is the field of fractions of the algebra $k[X]^G$ (see\;\cite[Lemma
3.2]{PV94}), the latter is an infinite-dimensional linear space over $k$:
\begin{equation}\label{inf}
\dim_k (k[X]^G)=\infty.
\end{equation}

From ${\rm char}\,k=0$ and the finiteness of the group $G$ it follows that the $kG$-modules $M$ and $k[X]$ are completely reducible.\;Therefore, to prove the lemma, it suffices to establish that, for every nonzero simple $kG$-module $S$, the multiplicity of its occurrence in the $S$-isotypic component of the $kG$-module $k[X]$ is infinite, which is equivalent to the infinite-dimensionality of this isotypic component as a linear space over $k$.\;In turn, for
this is sufficient to establish that this $S$-isotypic component is nonzero.\;Indeed,  the multiplication of functions defines on it a structure of a $k[X]^G$-module.\;Therefore, if this $S$-isotypic component contains a nonzero
function, its infinite-dimensionality follows from \eqref{inf} and the absence of zero-divisors in $k[X]$. Having in view this reduction, we shall now prove that the
$S$-isotypic component of the $kG$-module $k[X]$ is indeed nonzero.

The set of fixed points of every element of $G$ is closed in $X$.\;Since
$G$ is fnite, $X$ is irreducible, and the action of $G$ on $X$ is faithful, this implies that there exists a point $x$ of $X$, whose  $G$-stabilizer $G_x$ is trivial. Its
$G$-orbit
$G\cdot x$ is a $G$-stable and (in view of the finiteness) closed subset of $X$. Its closedness implies that the homomorphism of $kG$-modules
\begin{equation*}\label{rest}
k[X]\to k[G\cdot x],\quad f\mapsto f|_{G\cdot x},
\end{equation*}
is surjective. Therefore, to prove the nontriviality of the $S$-isotypic component of the $kG$-module $k[X]$, it suffices to prove the nontriviality of the
$S$-isotypic component of the $kG$-module  $k[G\cdot x]$.\;But it follows from the Frobenius duality that the multiplicity of the occurence of the
 $kG$-module $S$  in the $kG$-module $k[G\cdot x]$ is equal to the dimension of the space of
$G_x$-fixed points in the dual $kG$-module $S^*$
(see\;\cite[Thm.\;3.12]{PV94}). Since the group $G_x$ is trivial, this shows that  the specified multiplicity is equal to
$\dim (S)>0$.\;Therefore, the $S$-isotypic component of the
$kG$-module  $k[G\cdot x]$ is indeed nonzero. This completes the proof
of Lemma\;\ref{lemma}.
\end{proof}

\subsection{\boldmath${\rm rdim}_ k(G)$ and the existence of compressions} For any finite group $G$ and any field $\ell$ we put
\begin{equation}\label{rdim}
{\rm rdim}_ \ell(G):=\min \{m\!\in\! \mathbb Z, m\!>\!0\mid
\mbox{there is a group embedding $G\!\hookrightarrow\! \GL_m(\ell)$}\!\}.
\end{equation}
In other words, ${\rm rdim}_\ell(G)$ is the minimum of dimensions of faithful linear representa\-tions of the group $G$ over the field $\ell$.\;Thus $G$ has a faithful $n$-dimensional linear representation over $\ell$ if and only if $n\geqslant {\rm rdim}_ \ell(G)$.\;Note that if the group $G$ is Abelian,
then ${\rm rdim}_k(G)$ is equal to its rank.

 \begin{theorem}\label{nrdi} Let $\varrho$ be a {\rm(}faithful\,{\rm)} rational action of a finite group $G$ on an $n$-dimensional irreducible variety.
\begin{enumerate}[\hskip 4.2mm \rm(i)]
\item  If $\varrho$ has a fixed point, then $n\geqslant
{\rm rdim}_k(G)$.
\item  If $n>
{\rm rdim}_k(G)$, then $\varrho$ is compressible into a {\rm(}faithful{\rm)} rational action of a smaller dimension.
\item If there is an $n$-dimensional faithful linear representation  over $k$
    \begin{equation}\label{lrep}
    \lambda\colon G\to {\rm GL}_n
        \subset \Autn,
     \end{equation}
      then either $\varrho$ is compressible into a {\rm(}faithful\,{\rm)} rational action of a smaller dimension or $\varrho$ is obtained by a nontrivial base change from a linear action $\lambda$ of the group $G$ on $\bAn$.
\end{enumerate}
  \end{theorem}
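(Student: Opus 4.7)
My plan is to treat the three parts in order, exploiting at each step the tools already assembled in the section: Lemma \ref{lemma}, Theorem \ref{coli}, and the regularization technique used in the proof of Theorem \ref{fipo}.

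For (i), having a fixed point means, by definition, that there is a regularization $\rho\colon G\hookrightarrow {\rm Aut}(Y)$ with some $y\in Y^G$. Passing to a $G$-stable affine open neighborhood $U\ni y$ exactly as in the proof of Theorem \ref{fipo}, the tangent action $\tau\colon G\to {\rm GL}({\rm T}_{y,U})$ is faithful by \cite[Lem.\;4]{Po14_1}, and $\dim({\rm T}_{y,U})=n$ since $U$ is smooth of dimension $n$. This exhibits a faithful linear representation of $G$ in dimension $n$, so $n\geqslant {\rm rdim}_k(G)$ by the very definition \eqref{rdim}.

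For (ii), set $m:={\rm rdim}_k(G)<n$ and fix a faithful linear representation $\lambda\colon G\hookrightarrow {\rm GL}_m$. The $kG$-module $M:=k[\bAm]_1$ induced by $\lambda$ is dual to the faithful module $\bAm$, hence itself faithful. Regularizing $\varrho$ and shrinking to a $G$-stable affine open subset $U$, Lemma \ref{lemma} yields a $kG$-embedding $\phi\colon M\hookrightarrow k[U]$. Setting $f_i:=\phi(x_i)$, the morphism $\alpha:=(f_1,\dots,f_m)\colon U\to\bAm$ is $G$-equivariant with respect to $\lambda$ (this is the coordinate-ring translation of the equivariance of $\phi$, exactly the argument in the proof of Lemma \ref{exist}, $(\mathrm{a}_1)\Rightarrow(\mathrm{a}_2)$). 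The closure $Y':=\overline{\alpha(U)}\subseteq\bAm$ is then irreducible, $G$-stable, of dimension $\leqslant m<n$; its $G$-action is faithful because $k[Y']$ contains, via $\alpha^*$, a submodule isomorphic to the faithful $M$. Thus $\alpha$ compresses $\varrho$ into a faithful rational action of strictly smaller dimension.

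For (iii), repeat the same construction with the given $\lambda\colon G\hookrightarrow {\rm GL}_n$, obtaining a $G$-equivariant morphism $\alpha\colon U\to\bAn$. If $\alpha$ is not dominant, then $\overline{\alpha(U)}\subsetneq\bAn$ has dimension $<n$ and, by the same faithfulness argument as in (ii), provides a compression of $\varrho$ into a faithful action of smaller dimension. If $\alpha$ is dominant but not a birational isomorphism, then $\alpha$ itself is a nontrivial compression of $\varrho$ into $\lambda$, so $\varrho$ is obtained from $\lambda$ by a nontrivial base change. Finally, if $\alpha$ is a birational isomorphism, invoke Theorem \ref{coli}: part (b) produces a nonzero $G$-invariant form of degree $|G|>0$, and part (a) then furnishes a nontrivial polynomial homogeneous self-compression $\beta\colon\bAn\to\bAn$ of $\lambda$. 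The composite $\beta\circ\alpha\colon X\dashrightarrow\bAn$ is $G$-equivariant and dominant, but not birational (since $\beta$ is not), hence a nontrivial compression of $\varrho$ into $\lambda$; once again $\varrho$ is obtained from $\lambda$ by a nontrivial base change.

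The main obstacle is to secure faithfulness of the induced $G$-action on the image $Y'$ in (ii) and in the first subcase of (iii); this is unlocked by the observation that $k[\bAm]_1$ (resp.\ $k[\bAn]_1$) is dual to a faithful representation, hence itself a faithful $kG$-module, so its embedded copy inside $k[Y']$ leaves no nontrivial $g\in G$ acting trivially on $Y'$. A secondary subtlety is the final subcase of (iii), in which $\alpha$ is already a birational isomorphism and so is not itself a nontrivial compression; this is exactly where Theorem \ref{coli} steps in, supplying a nontrivial self-compression of $\lambda$ whose composition with $\alpha$ delivers the required nontrivial base change.
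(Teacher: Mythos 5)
Your proposal is correct and follows essentially the same route as the paper: part (i) via the faithful tangent representation at a fixed point of a regularization, and parts (ii)–(iii) via Lemma \ref{lemma} to embed a faithful $n$- (resp.\ ${\rm rdim}_k(G)$-) dimensional dual module into the coordinate ring, yielding a $G$-equivariant dominant morphism whose image either drops dimension or, after composing with the nontrivial self-compression supplied by Theorem \ref{coli}, gives the nontrivial base change from $\lambda$. The only cosmetic difference is that for (ii) you give the constructive argument that the paper itself records in Remark \ref{op}, rather than quoting the essential-dimension inequalities of Buhler--Reichstein as in the paper's primary proof of (ii).
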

  \begin{proof} Consider a regular  (faithful) action of the group
  $G$ on an $n$-dimensional smooth variety $X$, which is a regularization of the action $\varrho$.

  (i) If $\varrho$ has a fixed point, we choose $X$ so that $X^G\neq\varnothing$. Let $x\in X^G$. Since the tangent action
\begin{equation}\label{tan}
G\to {\rm GL} ({\rm T}_{x, X})
\end{equation}
of the group $G$ on ${\rm T}_{x, X}$ is faithful \cite[Lem.\;4]{Po14_1}, the homomorphism
\eqref{tan} is injective. From this and  \eqref{rdim} it follows that $n=\dim (X)=\dim ({\rm
T}_{x, X})\geqslant {\rm rdim}_k(G)$.
\vskip 1mm

(ii) In view of the inequality ${\rm ed}_k(G)\leqslant {\rm rdim}_k(G)$, which follows from
\eqref{rdim} and the definition of ${\rm ed}_k(G)$   (see
\cite[Thm. 3.1(b)]{BR97}), the statement
(ii) follows from the inequality
${\rm ed}_k(X)\leqslant {\rm ed}_k(G)$ proved in
 \cite[Thm. 3.1(c)]{BR97}.

Other proof of the statement (ii), not using
\cite[Thm. 3.1(b, c)]{BR97}, is obtained in the course of the proof of (iii) below, see
Remark \ref{op}.

\vskip 1mm

(iii) As in the proof of Theorem \ref{fipo}, replacing $X$  by an appropriate
invariant open subset, in the sequel we can (and shall) assume that
$X$ is affine.

Since the representation $\lambda$ is faithful, the dual representation
$\lambda^*\colon G\to {\rm GL}_r(k)$ is faithful as well.\;From
Lemma \ref{lemma} it follows that there a linear subspace $L$ in $k[X]$ with the following properties:
\begin{enumerate}[\hskip 4.2mm\rm (a)]
\item\label{a} $L$ is $G$-stable;
\item\label{c} $\dim (L)= n$;
\item\label{b} the action of $G$ on $L$ is the representation $\lambda^*$.
\end{enumerate}

Consider in $k[X]$ the $k$-subalgebra $A$ generated by the subspace $L$.  Since $\dim (L)<\infty$, it is finitely generated and therefore isomorphic to the algebra of regular functions of an affine variety $Y$.  It follows from \eqref{c} that
\begin{equation}\label{dimY}
\dim (Y)\leqslant n.
\end{equation}
The identity embedding $A\hookrightarrow  k[X]$ determines a dominant morphism
$\varphi\colon X\to Y.$ From \eqref{a} the  $G$-invarince of $A$ follows.
The action of
$G$ on
$A$ determines a regular action $\vartheta$ of the group $G$ on the variety $Y$.\;The morphism $\varphi $ is $G$-equivariant with respect to $\vartheta$. In view of \eqref{b} and the faithfulness of the representation $\varrho^*$, the action
 $\vartheta$ is faithful. Therefore, $\varphi$ is a compression of $\varrho$ into $\vartheta$.

Suppose that this compression
does not reduce the dimension of the action $\varrho$,
i.e.,
\begin{equation}\label{Yr}
\dim (Y)=\dim (X)=n.
\end{equation}
Hence, in this case any basis of the linear space
 $L$ over $k$ consists of the elements of the algebra $A$, which are algebraically independent over $k$, because, by construction, this algebra is generated by them; its transcendental degree over $k$ is then equal to the number of these elements:
\begin{equation*}
{\rm tr\,deg}_k(A)=\dim (Y)
\overset{\eqref{Yr}}{=\hskip -1mm=\hskip -1mm=}n
\overset{\eqref{c}}{=}\dim (L).
 \end{equation*}

 This proves that there is a $G$-equivariant isomorphism  $\alpha\colon Y\to
 L^*$, where $L^*$\,is a $kG$-module dual to the $kG$-module $L$.\;From \eqref{b} it follows that the action of $G$ on
 $L^*$ is the representtion $(\lambda^*)^*=\lambda$.\;By Theorem \ref{coli}
  there is a $G$-equivariant dominant morphism $\varepsilon\colon L^*\to
 L^*$, which is not a birational isomorphism.\;Therefore,
 the composition  $\varepsilon\circ\alpha\circ\varphi\colon X\to L^*$ is a nontrivial compression of the action $\varrho$ into the action $\lambda$.
\end{proof}

\begin{remark}\label{op}{\rm In view of \eqref{rdim},  as in the proof of statement (iii) we establish (under the assumption of affinity of $X$) the existence of

---\;\;an ${\rm rdim}_k(G)$-dimensional $kG$-submodule $M$ in $k[X]$, on which the action of the group  $G$ is faithful;

---\;\; an affine $G$-variety $Z$ and a dominant $G$-equivariant morphism $\psi\colon X\to Z$ such that $\psi^*(k[Z])$ is the $k$-subalgebra of $k[X]$ generated by the subspace  $M$.

Since the action of $G$  on $Z$ is faithful, $\psi$ is a compression of the action $\varrho$. If $n>{\rm rdim}_k(G)$, then $\psi$ reduces the dimension of $\varrho$, because $\dim (Z)\leqslant \dim_k(M)={\rm rdim}_k(G)$.

This gives another proof of the statement  (ii) of Theorem
\ref{nrdi}.}
\end{remark}

 \subsection{Compressing Abelian subgroups of rank \boldmath $2$ of the group ${\rm Cr}_2$}
Theorem \ref{rdim1av} answers the question about constructing finite Abelian subgroups of rank 1 of the Cremona group  ${\rm Cr}_n$  by means of base changes. For $n=2$, the next theorem answers the analogous question about the Abelian subgroups of rank  2,
i.e., the noncyclic subgroups isomorphic to  $\mathbb Z/a\mathbb Z\oplus  \mathbb Z/b\mathbb Z$,
$a\geqslant 2, b\geqslant 2$.

\begin{theorem}\label{rdim2} Let $\varrho$\,be a {\rm(}faithful\,{\rm)} rational action of a finite Abelian group $G$ of rank $2$
on $\bA2$
and let
$m_1,
m_2$\,be the sequence of invariant factors of the group $G$.
\begin{enumerate}[\hskip 2.2mm\rm(i)]
\item In every of the following cases
\begin{enumerate}[\hskip 4.0mm \rm(a)]
\item $|G|\neq 4$;
\item $|G|= 4$ and
$\varrho$
has a fixed point
\end{enumerate}
the rational action
 $\varrho$ is obtained by a nontrivial base change from a linear action
$\lambda\colon G\!\hookrightarrow\! \GL_2(k)$ on $\bA2$ such that
$\lambda(G)\!=\!
T_2(m_1, m_2)$
{\rm(}see\;\eqref{D}{\rm)}. In these cases the rational action $\varrho$ is incompressible into a rational action of a smaller dimension.
\item  If
$|G|= 4$ and $\varrho$ does not have a fixed point, then
 $G$ is a dihedral group $($that is a Klein's  Vierergruppe\hskip .3mm$)$,
and
$\varrho$ is obtained by a base change from the action
 $\gamma\colon G\to {\rm Cr_1}$
on ${\mathbf P}^1$, for which the group $\gamma(G)$
is generated by the elements
 $\sigma, \tau\in {\rm Aut}(\mathbf P^1)$ given by the formulas
\begin{equation}\label{st}
\mbox{
$\sigma\cdot (a_0:a_1)=(a_0:-a_1),\;\;\tau\cdot (a_0:a_1)=(a_1:a_0)$\;\;for all $(a_0:a_1)\in \mathbf P^1$}.
\end{equation}

\end{enumerate}
\end{theorem}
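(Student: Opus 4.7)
The plan is to reduce everything to Theorem \ref{nrdi}(iii) applied to a suitable faithful $2$-dimensional linear representation of~$G$. I choose characters $\chi_1,\chi_2\in G^\vee$ of orders $m_1,m_2$ generating $G^\vee$; the diagonal representation $\lambda=\mathrm{diag}(\chi_1,\chi_2)\colon G\to \GL_2(k)$ is faithful, and by \cite[Thm.\,1]{Po13_2} I may assume after suitable $\Cp$-conjugation that $\lambda(G)=T_2(m_1,m_2)$. Theorem \ref{nrdi}(iii) then furnishes the dichotomy: \emph{either} $\varrho$ is compressible into a faithful rational action of dimension $\leqslant 1$, \emph{or} $\varrho$ is obtained by a nontrivial base change from $\lambda$.

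The first alternative, after $G$-equivariant regularization on a smooth projective rational curve (rationality by L\"uroth's theorem), produces an embedding $G\hookrightarrow \Aut(\mathbf P^1)$. Since the finite abelian subgroups of $\Aut(\mathbf P^1)$ are cyclic or Klein four, and $G$ has rank~$2$, this forces $|G|=4$ and identifies the image, up to $\Aut(\mathbf P^1)$-conjugation, with $\gamma(G)$ of \eqref{st}. For the second alternative, I claim $\varrho$ must have a fixed point: the description \eqref{close}, passed through projective models, realizes $X_1$ birationally as $\overline{\mathbf P^2\times_{\mathbf P^2\dss G}\bar Z}$ with $\bar Z$ a projective model of $Z=X_1\dss G$; the extended morphism $\bar\beta\colon \bar Z\to \mathbf P^2\dss G$ is surjective, hence for the $\lambda$-fixed point $o=(0{:}0{:}1)\in \mathbf P^2$ and any $z$ in the nonempty fibre $\bar\beta^{-1}(\pi_2(o))$, the point $(o,z)$ lies in the fibred product and is $G$-fixed. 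A tangent-space calculation shows the fibred product is smooth at $(o,z)$ (the differential of the quotient morphism $\mathbf P^2\to \mathbf P^2\dss G$ vanishes at $o$ because $m_1,m_2\geqslant 2$), so a $G$-equivariant desingularization yields a smooth projective regularization of $\varrho$ with a $G$-fixed point.

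Combining the two alternatives gives the required case analysis. If $|G|\neq 4$ the first alternative is excluded, so $\varrho$ is obtained by nontrivial base change from $\lambda$, proving~(i)(a). If $|G|=4$ and $\varrho$ has a fixed point, Theorem \ref{afipo} applies directly (the only rank-$2$ abelian group of order~$4$ is the Klein four-group), proving~(i)(b). If $|G|=4$ and $\varrho$ has no fixed point, the second alternative is ruled out by the claim above, and the first alternative then delivers~(ii). For the incompressibility statement in~(i): in (a) no faithful $1$-dimensional rational action of $G$ exists by the argument above; in (b), a hypothetical $1$-dimensional compression would extend (smoothness of a regularization $Y$ with $Y^G\neq\varnothing$ together with properness of $\mathbf P^1$) to a morphism $Y\to\mathbf P^1$ carrying a $G$-fixed point of $Y$ to a $G$-fixed point of $\gamma(G)$ on $\mathbf P^1$, contradicting the easily verified absence of common fixed points of $\sigma,\tau$ in~\eqref{st}. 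The main obstacle I anticipate is the fixed-point preservation in the second paragraph: one must check nonemptiness of the fibre $\bar\beta^{-1}(\pi_2(o))$ and local smoothness of the fibred product at a ramification point of the quotient map, so that the fixed point genuinely survives desingularization.
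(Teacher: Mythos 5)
Your overall route is the same as the paper's: apply Theorem \ref{nrdi}(iii) to a faithful representation with image $T_2(m_1,m_2)$, identify the only possible $1$-dimensional compression target as the Klein four-group acting on $\mathbf P^1$ by \eqref{st}, and sort the cases; your appeal to Theorem \ref{afipo} in case (i)(b) is a legitimate shortcut. The genuine gap is your second paragraph, i.e.\ the claim that a nontrivial base change from $\lambda$ forces $\varrho$ to have a fixed point. Exhibiting the point $(o,z)$ in the scheme-theoretic fibred product $\mathbf P^2\times_{\mathbf P^2/G}\bar Z$ is not enough: the birational model of $X_1$ is the closure, as in \eqref{close}, of the locus where the two maps agree generically, and $(o,z)$ need not lie on that component at all; and even when it does, ``has a fixed point'' in this paper means a fixed point on a \emph{smooth complete} regularization, so the point must be shown to survive an equivariant resolution. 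Your smoothness claim is unjustified and in general false: whether the fibred product is smooth at $(o,z)$ depends entirely on the local behaviour of $\bar\beta$ at $z$ (it may be ramified there, or contract an exceptional curve to $\pi_2(o)$ after you resolved its indeterminacy), and the vanishing of $d_o\pi_2$, which you offer as the reason, works against smoothness rather than for it. Tellingly, your argument uses only $m_1,m_2\geqslant 2$, whereas the ``going up'' statement you are trying to prove fails for general finite abelian groups and holds under the elementary-abelian hypothesis $m_1=m_2=p$; this is exactly why the paper invokes \cite[Prop.\;A.4]{RY00_2} at this point (applicable because $|G|=4$, $m_1=m_2=2$) instead of arguing directly. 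Without this step your exclusion of the second alternative in case (ii) collapses.

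A secondary flaw: for incompressibility in case (i)(b) you assert that a hypothetical $1$-dimensional compression ``extends to a morphism $Y\to\mathbf P^1$'' by smoothness of $Y$ and properness of $\mathbf P^1$. This is false for surfaces: a rational map from a smooth projective surface to $\mathbf P^1$ can have isolated indeterminacy points, and your $G$-fixed point may be one of them. What you actually need is the ``going down'' property for fixed points, which is \cite[Prop.\;A.2]{RY00_2} and is what the paper cites; either cite it or supply the standard argument (restrict to a $G$-stable curve germ through the fixed point not contained in the indeterminacy locus), but the one-line extension claim does not stand.
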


\begin{proof}
Since $G$ is an Abelian group of rank $2$, there exists a faithful linear represen\-ta\-tion
 \eqref{lrep} with $n=2$ and
$\lambda(G)=T_2(m_1, m_2)$.

If
$\varrho$ is compressible into a rational action
 $\vartheta$ of a smaller dimension, then $\vartheta$ is a faithful
 rational action of the group $G$ on an irreducible algebraic curve
 $C$. In view of the existence of a dominant rational map  $\bA2\dashrightarrow C$
 (a compression of $\varrho$ into $\vartheta$), the curve $C$
is rational.\;Therefore, we can (and shall) assume that $C=\mathbf P^1$
and hence
$G$ is isomorphic to a subgroup of  ${\rm Cr_1}={\rm Bir}(\mathbf P^1)={\rm Aut}(\mathbf P^1)={\rm PSL}_2$.\;It follows from the well-known description of finite subgroups in ${\rm PSL}_2$ that
noncyclic Abelian among them are only the subgroups conjugate
to the dihedral subgroup of order $4$, which is generated by the elements
$\sigma$ and $\tau$ given by formulas \eqref{st}.\;They do not have fixed points on $\mathbf P^1$.
In view of the ``going down'' property for fixed points (see\;\cite[Prop.\;A.2]{RY00_2}), it follows from
$(\mathbf P^1)^G=\varnothing $ that $\varrho$ does not have a fixed point.

If $\varrho$ is not compressible into a rational action
of smaller dimension, then by Theorem \ref{nrdi}(iii),
$\varrho$ is obtained by a nontrivial base change from a linear action
 $\lambda$ of the group $G$
on $\bA2$. From this and the
``going up'' property for fixed points
 (see\;\cite[Prop.\;A.4]{RY00_2}) it follows that if in the considered case
 both invariant factors $m_1$ and $m_2$ are equal to the same prime numer,
 then $\varrho$ has a fixed point. In particular, this is the case if $|G|=4$.
 This completes the proof of Theorem \ref{rdim2}.
\end{proof}

\subsection{Compressing other subgroups}
The classification of finite Abelian sub\-groups in ${\rm Cr}_2$ up to conjugacy is given in  \cite{Bl06}.\;In view of Theorems
 \ref{rdim1av} and \ref{rdim2}, it follows from it that among these subgroups only the subgroups
 isomorphic to
$$\mathbb Z/2d\mathbb Z\oplus  (\mathbb Z/2\mathbb Z)^2, d\geqslant 1;\;\; (\mathbb
Z/4\mathbb Z)^2\oplus  \mathbb Z/2\mathbb Z;\;\;
(\mathbb Z/3\mathbb Z)^3,\;\; \mbox{and}\;\; (\mathbb Z/2\mathbb Z)^4
$$
remain unexplored for nontrivial compressibility.\;Their ranks are 3, 3, 3, and 4, respectively. By Theorem \ref{nrdi}(i), all of these subgroups do not have fixed points.

\begin{theorem}\label{nal} Let $G$\,be a non-Abelian finite group
different from dihedral group and admitting a faithful linear representation $\lambda\colon G\hookrightarrow {\rm GL}_2(k)$.\;Then every {\rm(}faithful\,{\rm )}
rational action of the group $G$ on $\bA2$ is obtained by means of a nontrivial base change from its linear action  $\lambda$ on $\bA2$.
\end{theorem}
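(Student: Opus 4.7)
The plan is to apply Theorem~\ref{nrdi}(iii) with $n=2$ and the given faithful representation $\lambda\colon G\hookrightarrow {\rm GL}_2(k)\subset {\rm Aut}(\bA2)$. That theorem produces the following dichotomy: either $\varrho$ is compressible into a (faithful) rational action of $G$ of strictly smaller dimension, or else $\varrho$ is obtained from the linear action $\lambda$ on $\bA2$ by a nontrivial base change. Since the second possibility is exactly the desired conclusion, the entire task reduces to ruling out the first.

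To rule it out, I would first observe that a faithful rational action of a nontrivial group on a $0$-dimensional variety is impossible, so a compression of strictly smaller dimension would have to land on a $1$-dimensional irreducible variety $C$ dominated by the rational surface $\bA2$. Then $C$ is unirational, hence by L\"uroth's theorem birational to $\mathbf{P}^1$, and the compressed action delivers a group embedding $G\hookrightarrow {\rm Bir}(\mathbf{P}^1)=\Cl={\rm PSL}_2(k)$. The classical list of finite subgroups of ${\rm PSL}_2(k)$ in characteristic zero then forces $G$, being non-Abelian and not dihedral, to be isomorphic to one of the three polyhedral groups $A_4$, $S_4$, or $A_5$.

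The heart of the argument---and the only step that I expect to carry any genuine content---is the verification that none of these three polyhedral groups admits a faithful two-dimensional linear representation over $k$. I would carry this out by direct inspection of their character tables in characteristic zero: the irreducible representations of $A_4$ have dimensions $1,1,1,3$, so every two-dimensional representation factors through the abelianization $\mathbf{Z}/3\mathbf{Z}$; the unique irreducible two-dimensional representation of $S_4$ has the Klein four-group as its kernel, while decomposable two-dimensional representations of $S_4$ factor through $\mathbf{Z}/2\mathbf{Z}$; and $A_5$, being simple with irreducibles of dimensions $1,3,3,4,5$, admits no two-dimensional representation at all. Each case contradicts the existence of $\lambda$, so the first alternative of the dichotomy is excluded, and the second alternative then yields exactly the statement of the theorem.
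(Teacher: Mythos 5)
Your proposal is correct and follows essentially the same route as the paper: invoke Theorem \ref{nrdi}(iii), rule out a compression to dimension $\leqslant 1$ by noting the target curve would be rational so that $G$ would embed in ${\rm PSL}_2$, hence be a rotation group of a tetrahedron, octahedron, or icosahedron, and then contradict the existence of the faithful two-dimensional representation $\lambda$ via the representation theory of $A_4$, $S_4$, $A_5$. Your character-table verification just spells out (slightly more carefully, in fact, by also treating decomposable representations) the fact the paper cites from the literature.
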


\begin{proof}
The statement will follow from Theorem \ref{nrdi}(iii)
if we prove that
$\varrho$ from this theorem cannot be compressed into a faithful
rational action of a smaller dimension.

Arguing on the contrary, assume that such a compression exists.\;Then, as in the proof of Theorem \ref{rdim2} we obtain that  $G$ is isomorphic to a subgroup of the group ${\rm Aut}(\mathbf P^1)={\rm PSL}_2$.\;Since noncyclic and nondihedral finite subgroups in ${\rm PSL}_2$ are only the rotation groups of a regular tetrahedron, octahedron, and icosahedron, $G$ is isomorphic to one of them. However, this is impossible because the rotation group of an icosahedron does not have nontrivial two-dimensional representations,
 and even though the rotation groups of  a regular tetrahedron and octahedron  have them, the kernels of these representations are nontrivial
 (their orders are 4), see, e.g.,  \cite{Vi85}.
Contradiction.
\end{proof}

 \section{Group embeddings and the Cremona groups}\label{2}
In this section, the characteristic  $k$ is zero.

\subsection{Properties of abstract Jordan groups}\label{2.1}
We recall the concepts introduced in  \cite[Def.\;2.1]{Po11}, \cite[Def.\;1]{Po14_1}.

For any finite group $H$, put
\begin{equation}\label{mF}
m_H:=\underset{S}{\rm min}\, [H:S],
\end{equation}
where $S$
runs over all normal Abelian subgroups of the group $H$.

\begin{definition}\label{J} Let $G$ be a group and let
\begin{equation}\label{Jc1}
J_G:=\underset{F}{\rm sup}\;m_F
\end{equation}
where $F$ runs over all finite subgroups of  $G$.
If $J_G<\infty$, then $G$ is called a {\it Jordan group} (one also says that $G$ has the {\it Jordan property}), and
 $J_G$ its {\it Jordan constant}.
\end{definition}

\begin{lemma} \label{Jsss}
For any groups $G_1, \ldots, G_s$, the inequality
\begin{equation}\label{JJ}
J^{\ }_{G_1\times\cdots\times G_s}\geqslant J_{G_1}\cdots J_{G_s}
\end{equation}
holds {\rm(}if $J_{G_i}=\infty$, then by definition, {\rm \eqref{JJ}} means that $J^{\ }_{G_1\times\cdots\times G_s}=\infty)$.
\end{lemma}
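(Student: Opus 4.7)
The plan is to produce, for any choice of finite subgroups $F_i\subseteq G_i$, a lower bound on the index of every normal abelian subgroup of the finite product $F:=F_1\times\cdots\times F_s\subseteq G_1\times\cdots\times G_s$. The target is the multiplicative estimate
\[
m_F \;\geqslant\; m_{F_1}\cdots m_{F_s},
\]
from which \eqref{JJ} follows by taking the supremum over each $F_i$ independently.

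The key step exploits the canonical projections $\pi_i\colon F\twoheadrightarrow F_i$. Given a normal abelian subgroup $N\subseteq F$, I would first observe that each $\pi_i(N)$ is abelian as a homomorphic image of $N$. Next, $\pi_i(N)$ is normal in $F_i$: lifting any $g_i\in F_i$ to the element $(1,\ldots,1,g_i,1,\ldots,1)\in F$ and using normality of $N$ in $F$ shows that $g_i\pi_i(N)g_i^{-1}\subseteq \pi_i(N)$. By definition of $m_{F_i}$ this forces $[F_i:\pi_i(N)]\geqslant m_{F_i}$. Since $N$ sits inside $\pi_1(N)\times\cdots\times \pi_s(N)$, one obtains
\[
[F:N] \;\geqslant\; [F:\pi_1(N)\times\cdots\times\pi_s(N)] \;=\; \prod_{i=1}^{s}[F_i:\pi_i(N)] \;\geqslant\; \prod_{i=1}^{s} m_{F_i},
\]
and minimizing over $N$ yields the displayed multiplicative bound.

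Finally, I would address the sup-product interchange. Since $m_H\geqslant 1$ for every finite group $H$, one has
\[
\sup_{F_1,\ldots,F_s}\prod_{i=1}^s m_{F_i} \;=\; \prod_{i=1}^s \sup_{F_i} m_{F_i} \;=\; \prod_{i=1}^s J_{G_i},
\]
so the desired inequality $J^{\ }_{G_1\times\cdots\times G_s}\geqslant \prod_i J_{G_i}$ follows. The convention about $\infty$ is absorbed automatically: if some $J_{G_i}=\infty$, then by keeping the $F_j$ with $j\neq i$ trivial (so that those factors equal $1$) one sees that $\prod_i m_{F_i}$ is unbounded, hence so is $J^{\ }_{G_1\times\cdots\times G_s}$. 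I do not foresee any substantive obstacle; the only modest subtlety is verifying normality of $\pi_i(N)$, which follows cleanly from the factorwise embedding $F_i\hookrightarrow F$ just described.
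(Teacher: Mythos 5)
Your proposal is correct and follows essentially the same route as the paper: project a normal abelian subgroup $N$ of $F_1\times\cdots\times F_s$ onto each factor, note $\pi_i(N)$ is normal abelian in $F_i$, and use $N\subseteq\pi_1(N)\times\cdots\times\pi_s(N)$ to get $m_{F_1\times\cdots\times F_s}\geqslant m_{F_1}\cdots m_{F_s}$, then pass to suprema. The only cosmetic difference is that you phrase the estimate via indices while the paper bounds $|N|$ by orders, which is the same computation.
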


\begin{proof} Let $F_i$ be a finite subgroup of  $G_i$ and let $N$ be a normal Abelian subgroup of the finite subgroup $F_1\times\cdots\times F_s$ of the group $G_1\times\cdots\times G_s$.\;Let $\pi_i\colon F_1\times\cdots\times F_s\to F_i$ be the projection to the $i$th factor.\;Then $\pi_i(N)$ is a normal Abelian subgroup of the group $F_i$, therefore, \eqref{mF} implies the inequality
\begin{equation}\label{divis1}
|\pi_i(N)|\leqslant \frac{|F_i|}{\,m_{F_i}}.
\end{equation}
From the inclusion $N\subseteq \pi_1(N)\times\cdots\times\pi_s(N)$ and the inequality \eqref{divis1}, we get:
\begin{equation}\label{divis2}
|N|\leqslant |\pi_1(N)\times\cdots\times\pi_s(N)|=\prod_{i=1}^{s}|\pi_i(N)|\leqslant \prod_{i=1}^{s}\frac{|F_i|}{\,m_{F_i}}=\frac{|F_1\times\cdots\times F_s|}{m_{F_1}\cdots m_{F_s}}.
\end{equation}
It follows from \eqref{divis2} that $[(F_1\times\cdots\times F_s):N]\geqslant m_{F_1}\cdots m_{F_s}$, whence, in view of  \eqref{mF}, we obtain the inequality
\begin{equation}\label{divis3}
m^{\ }_{F_1\times\cdots\times F_s}\geqslant m_{F_1}\cdots m_{F_s}.
\end{equation}
Now \eqref{JJ} follows from \eqref{divis3} and \eqref{Jc1}.
\end{proof}



\begin{remark} {\rm
We set
$j_G:=\underset{F}{\rm sup}\;\underset{A}{\rm min}\, [F:A],
$
where $F$ runs over all finite subgroups of $G$, and $A$ over all Abelian (not necessarily normal) subgroups of $F$.\;Clearly $j_G\leqslant J_G$.\;The conditions  $J_G<\infty$ and $j_G<\infty$ turn out to be
equivalent,
see \cite[Rem.\;2.2]{Po11}.
Omitting the assumption of the normality of the subgroup $N$ in the proof of Lemma \ref{Jsss}, we obtain for any groups $G_1,\ldots, G_s$
the proof of the inequality
\begin{equation*}\label{JJJ}
j^{\ }_{G_1\times\cdots\times G_s}\geqslant j_{G_1}\cdots j_{G_s}.
\end{equation*}
 }
\end{remark}

\begin{theorem}\label{10} Let $P$ be a Jordan group and let $Q_1, \ldots , Q_s$ be the groups, each of which contains a non-Abelian finite subgroup.\;Then the group
$Q_1\times\cdots \times Q_s$ is nonembeddable in the group $P$ if $s>{\rm log}_2(J_P)$.
\end{theorem}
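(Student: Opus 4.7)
My plan is to argue by contradiction, combining the submultiplicativity of the invariant $m$ under direct products (which is the technical core of the proof of Lemma \ref{Jsss}) with the elementary observation that $m_F\geqslant 2$ whenever $F$ is a non-Abelian finite group.

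Assume, toward a contradiction, that there exists an embedding $Q_1\times\cdots\times Q_s\hookrightarrow P$. By hypothesis, I can select for each $i$ a non-Abelian finite subgroup $F_i\subseteq Q_i$, and then the finite product $F:=F_1\times\cdots\times F_s$ embeds into $P$ as well. Since $P$ is Jordan, definition \eqref{Jc1} immediately gives $m_F\leqslant J_P$.

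Next I would invoke the inequality
$$m^{\ }_{F_1\times\cdots\times F_s}\geqslant m_{F_1}\cdots m_{F_s},$$
which is exactly what was established inside the proof of Lemma \ref{Jsss} (see \eqref{divis3} there): for any normal Abelian subgroup $N\trianglelefteq F$, each projection $\pi_i(N)$ to the $i$\,th factor is normal and Abelian in $F_i$, so the containment $N\subseteq \pi_1(N)\times\cdots\times\pi_s(N)$ together with \eqref{mF} forces $[F:N]\geqslant m_{F_1}\cdots m_{F_s}$. On the other hand, for each non-Abelian finite group $F_i$ one has $m_{F_i}\geqslant 2$, because the equality $m_{F_i}=1$ would mean, via \eqref{mF}, that $F_i$ itself is normal Abelian in $F_i$, contradicting non-Abelianness. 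Therefore $m_F\geqslant 2^s$.

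Combining $2^s\leqslant m_F\leqslant J_P$ gives $s\leqslant \log_2 J_P$, contradicting the hypothesis $s>\log_2 J_P$. There is no real obstacle: the entire argument is a direct corollary of the submultiplicative inequality for $m$ proved along the way in Lemma \ref{Jsss}, together with the trivial lower bound $m_{F_i}\geqslant 2$ for non-Abelian $F_i$.
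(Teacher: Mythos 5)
Your proof is correct and follows essentially the same route as the paper: the paper applies the statement of Lemma \ref{Jsss} to the $Q_i$ together with $J_{Q_i}\geqslant 2$, while you apply the underlying inequality \eqref{divis3} directly to chosen non-Abelian finite subgroups $F_i\subseteq Q_i$ and use $m_{F_i}\geqslant 2$, which is the same argument in only slightly different packaging. Both yield $2^s\leqslant J_P$ and hence $s\leqslant \log_2 J_P$, so there is nothing to correct.
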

\begin{proof} It follows from Definition \ref{J} that $J_P< \infty$ and, if $Q_1\times\cdots \times Q_s$ is embeddable in $P$, then $J_{Q_1\times\cdots \times Q_s}\leqslant J_P$.\;This and Lemma \ref{Jsss} yield the inequality $J_{Q_1}\cdots J_{Q_s}\leqslant J_P$.\;But from \eqref{mF}, \eqref{Jc1}, and the condition on $Q_i$
it follows that $J_{Q_i}\geqslant 2$ for every $i$. Hence $2^s\leqslant J_P$, and therefore, $s\leqslant {\rm log}_2(J_P)$.
\end{proof}

\begin{remark}\label{rema}
{\rm
The statement and the proof of Theorem
\ref{10} remain in effect, if  in them $J_P$ is replaced by $j_P$, and $J_{Q_i}$ by $j_{Q_i}$.
}
\end{remark}

\subsection{Subgroups of the form \boldmath$G_1\times\cdots\times G_s$ and $p$-subgroups
in the Cremona groups}
We now apply the results from  Subsection \ref{2.1} to the Cremona groups.

\begin{theorem}\label{2121} Let $X$ be a rationally connected  variety $X$ defined over $k$.\;Then there exists an integer $b_X$, depending on $X$, such that every product of groups
$G_1\times\cdots\times G_s$, each of which contains a finite non-Abelian subgroup,
is nonembeddable in the group ${\rm Bir}_k (X)$ if $s>b_X$.
\end{theorem}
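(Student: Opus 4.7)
The plan is to derive Theorem \ref{2121} as an essentially immediate consequence of Theorem \ref{10} once the Jordan property of ${\rm Bir}_k(X)$ has been invoked. The key input, already recorded in the footnote to the introduction, is that for any rationally connected $k$-variety $X$ with ${\rm char}\,k=0$, the group ${\rm Bir}_k(X)$ is Jordan; this is \cite[Thm.\;1.8]{PS16} combined with Birkar's proof of the BAB conjecture in \cite[Thm.\;3.7]{Bi17}. Hence the Jordan constant $J_{{\rm Bir}_k(X)}$ is a well-defined finite integer depending only on $X$.

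With this in hand, the plan is to define
\begin{equation*}
b_X := \lfloor {\rm log}_2(J_{{\rm Bir}_k(X)}) \rfloor,
\end{equation*}
which is a nonnegative integer depending on $X$, and then to apply Theorem \ref{10} directly with $P:={\rm Bir}_k(X)$ and $Q_i:=G_i$. Indeed, each $Q_i=G_i$ contains a non-Abelian finite subgroup by hypothesis, and $P$ is Jordan by the above. Theorem \ref{10} then asserts that the product $G_1\times\cdots\times G_s$ is nonembeddable in $P={\rm Bir}_k(X)$ whenever $s>{\rm log}_2(J_P)$, which holds in particular whenever $s>b_X$.

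Unpacking the proof of Theorem \ref{10} for clarity: if one assumed, for contradiction, that $G_1\times\cdots\times G_s$ embeds in ${\rm Bir}_k(X)$, then by Lemma \ref{Jsss} one would get $J_{G_1}\cdots J_{G_s}\leqslant J_{{\rm Bir}_k(X)}$, while the presence of a non-Abelian finite subgroup in each $G_i$ forces $J_{G_i}\geqslant 2$, so $2^s\leqslant J_{{\rm Bir}_k(X)}$ and $s\leqslant {\rm log}_2(J_{{\rm Bir}_k(X)})$, contradicting $s>b_X$.

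There is no real obstacle beyond correctly quoting the two ingredients: the Jordan property of ${\rm Bir}_k(X)$ for rationally connected $X$, and Theorem \ref{10}. The only subtlety worth flagging is that the constant $b_X$ is not universal; it depends on $X$ through $J_{{\rm Bir}_k(X)}$, and one should verify that this constant is genuinely finite, which is precisely what the combined results of \cite{PS16} and \cite{Bi17} guarantee.
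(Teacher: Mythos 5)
Your proposal is correct and follows exactly the paper's own argument: the paper also deduces Theorem \ref{2121} directly from Theorem \ref{10} together with the Jordan property of ${\rm Bir}_k(X)$ for rationally connected $X$ (the combination of \cite{PS16} and \cite{Bi17} cited in the footnote), with the constant $b_X$ coming from $J_{{\rm Bir}_k(X)}$ just as you define it.
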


\begin{proof} This follows from Theorem \ref{10} and the Jordan property of the group ${\rm Bir}_k (X)$ (see the footnote in the Introduction).
\end{proof}

\begin{corollary}\label{sl} Let $n$ be a positive integer.\;Then there exists an integer
$b_{n, k}$, depend\-ing on $n$  and the field $k$,  such that every product of groups
 $G_1\times\cdots\times G_s$, each of which contains a finite non-Abelian subgroup,
 is nonembeddable in the Cremona group
 ${\rm Cr}_n(k)$ if $s>b_{n, k}$.
\end{corollary}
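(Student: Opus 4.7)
The corollary is essentially a specialization of Theorem \ref{2121}, so the plan is simply to reduce it to the theorem by exhibiting a rationally connected variety whose group of birational self-maps equals the Cremona group.

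First, I would recall from the Introduction that the Cremona group $\mathrm{Cr}_n(k)$ of rank $n$ over $k$, defined as the $k$-automorphism group of $k(x_1,\ldots,x_n)$, coincides via the geometric interpretation with $\mathrm{Bir}_k(X)$ for any irreducible $k$-rational variety $X$ of dimension $n$. In particular, taking $X = \mathbf{P}^n_k$ (or $\mathbf{A}^n_k$), one has a canonical group isomorphism
\begin{equation*}
\mathrm{Cr}_n(k) \cong \mathrm{Bir}_k(\mathbf{P}^n_k).
\end{equation*}

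Second, I would invoke the obvious fact that $\mathbf{P}^n_k$ is rationally connected (being rational, a fortiori), so that Theorem \ref{2121} applies to $X := \mathbf{P}^n_k$. This theorem furnishes an integer $b_{\mathbf{P}^n_k}$ such that any product $G_1\times\cdots\times G_s$ of groups, each containing a finite non-Abelian subgroup, fails to embed in $\mathrm{Bir}_k(\mathbf{P}^n_k)$ whenever $s > b_{\mathbf{P}^n_k}$.

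Finally, setting $b_{n,k} := b_{\mathbf{P}^n_k}$ yields the desired constant, and the isomorphism of the first step transports the nonembeddability statement from $\mathrm{Bir}_k(\mathbf{P}^n_k)$ to $\mathrm{Cr}_n(k)$, completing the proof. There is no real obstacle here: all the substance is concentrated in Theorem \ref{2121} (and, via it, in Theorem \ref{10}, Lemma \ref{Jsss}, and the Jordan property of $\mathrm{Bir}_k(X)$ for rationally connected $X$ cited in the Introduction's footnote). The only thing to check is the constant's dependence: since $X = \mathbf{P}^n_k$ is determined by $n$ and $k$, the resulting $b_{n,k}$ depends only on $n$ and $k$, as claimed.
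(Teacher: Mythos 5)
Your proposal is correct and follows exactly the paper's own argument: the paper likewise deduces the corollary from Theorem \ref{2121} by identifying ${\rm Cr}_n(k)$ with ${\rm Bir}_k(X)$ for a rational (hence rationally connected) $n$-dimensional variety $X$. Nothing is missing; the dependence of the constant only on $n$ and $k$ is handled just as you describe.
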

\begin{proof} This follows from Theorem \ref{2121} in view of rational connectedness of rational varieties.
\end{proof}

\begin{remark}{\rm
According to Theorem \ref{10} and Remark \ref{rema}, we can take $b_X={\rm log}_2(j^{\ }_{{\rm Bir}_k(X)})$
in Theo\-rem\;\ref{2121}.\;The explicit upper bounds on $j^{\ }_{{\rm Cr}_2(k)}$ and $j^{\ }_{{\rm Bir}_k(X)}$ for
rationally connected threefolds $X$, as well as their exact values under certain rest\-ric\-ti\-ons, are found in  \cite{Se09}, \cite{PS17}.\;For example, if $k=\overline k$, then $j_{{\rm Cr}_n}=288$ and 10368 respectively for
$n=2$ and 3.
}
\end{remark}

\begin{corollary} For every prime integer $p$ and rationally connected  variety $X$ defined over $k$, there exists a non-Abelian finite $p$-group nonembeddable in ${\rm Bir}_k (\hskip -.3mmX\hskip -.3mm)$.\;In parti\-cu\-lar, for every integer $n>0$, there exists a non-Abelian finite  $p$-group nonem\-beddable in the Cremona group  ${\rm Cr}_n(k)$.
\end{corollary}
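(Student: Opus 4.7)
The plan is to bootstrap the existence of a non-Abelian finite $p$-group from a single such group into arbitrarily many factors, then invoke Theorem \ref{2121} (or Corollary \ref{sl} for the Cremona case).

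First, I would fix a non-Abelian finite $p$-group $H$ for the given prime $p$. Such groups exist for every prime: for $p=2$ the dihedral group of order $8$ works, and for odd $p$ the Heisenberg group of $3\times 3$ upper unitriangular matrices over the field with $p$ elements has order $p^3$ and is non-Abelian. This produces, in particular, a non-Abelian finite subgroup inside $H$ itself (namely $H$).

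Next, I would consider the direct power $H^s=\underbrace{H\times\cdots\times H}_{s}$ for a positive integer $s$ to be chosen. Since the class of finite $p$-groups is closed under finite direct products, $H^s$ is again a finite $p$-group, and it is non-Abelian as soon as $s\geqslant 1$ because $H$ is. Each of the $s$ factors contains a non-Abelian finite subgroup (itself), so $H^s$ has exactly the shape required for the hypothesis of Theorem \ref{2121}.

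Finally, I would apply Theorem \ref{2121} to the rationally connected variety $X$: it furnishes an integer $b_X$ such that any product of $s$ groups each containing a non-Abelian finite subgroup fails to embed in ${\rm Bir}_k(X)$ whenever $s>b_X$. Choosing any integer $s>b_X$ (say $s=b_X+1$), the non-Abelian finite $p$-group $H^s$ is nonembeddable in ${\rm Bir}_k(X)$, which proves the first assertion. The statement for ${\rm Cr}_n(k)$ then follows by the same argument applied to $X=\mathbf{A}^n$, using Corollary \ref{sl} (or equivalently, the fact that $\mathbf{A}^n$ is rationally connected). There is no real obstacle here: the only content beyond Theorem \ref{2121} is the elementary observation that non-Abelian finite $p$-groups exist for every prime $p$ and that direct powers preserve being a non-Abelian $p$-group.
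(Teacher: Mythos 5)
Your argument is correct and is exactly the route the paper intends: the paper's proof simply cites Theorem \ref{2121}, Corollary \ref{sl}, and the existence of non-Abelian finite $p$-groups, and your direct power $H^{s}$ with $s>b_X$ is the natural way to combine these ingredients. Nothing further is needed.
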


\begin{proof} This follows from Theorem  \ref{2121}, its Corollary  \ref{sl},
and the existence of finite non-Abelian $p$-groups.
\end{proof}

\subsection{Applications:\;\boldmath $p$-rank and embeddings of groups} Considering $p$-sub\-gro\-ups yields an obstacle to the existence of embeddings of groups.\;From here some applications are obtained.

Namely, let $p$ be a prime integer.\;Recall that a finite $p$-group is called {\it elementary} if it is Abelian and all its invariant factors (see above Subsection \ref{fabe}) are equal to\;$p$.

 \begin{definition}  For any group $G$ and prime integer  $p$, we call the {\it $p$-rank of the group  $G$} and denote by  ${\rm rk}_p(G)$ the least upper bound of ranks of all elementary  $p$-subgroups of the group $G$.
 \end{definition}

  \begin{example}\label{epr} {\rm Let the group $T$ be an $n$-dimensional torus  in the category of either affine algebraic groups over $\overline k$ or real Lie groups  (i.e.,
 $T$ is isomorphic to the product of $n$ copies of the group  ${\overline k}^\times$ в in the first case and the group $\{z\in \mathbf C^\times\mid |z|=1\}$ in the second case).
 It is not difficult to see that then
 ${\rm rk}_p(T)=n$
 for every prime integer\;$p$.\quad $\square$}
 \end{example}

 Clearly, if  $G_1$ and $G_2$ are two groups and ${\rm rk}_p(G_1)>{\rm rk}_p(G_2)$ for some $p$, then $G_1$ is nonembeddable in $G_2$.\;The applications of this remark are based on the fact that in some cases ${\rm rk}_p(G)$ can be explicitly computed or estimated. In particular, this is so for the Cremona groups:

\begin{theorem}\label{Rd} For any integer $n>0$, there exists a constant $R_n$,
depending on  $n$, such that ${\rm rk}_p(H)=n$ for every
 {\rm(}not necessarily closed\hskip .5mm{\rm)} subgroup $H$ of
 ${\rm Cr}_n$, containing an $n$-dimensional algebraic torus, and every
   $p>R_n$. In partucular,
 \begin{equation*}\label{rhod}
 {\rm rk}_p ({\rm Cr}_n)={\rm rk}_p ({\rm Aut}({\bf A}^{\hskip -.6mm n}))=n\;\;\mbox{for every $p >\!R_n$.}
 \end{equation*}
 \end{theorem}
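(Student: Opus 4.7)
My plan is as follows. The inequality ${\rm rk}_p(H)\geqslant n$ is immediate for every prime $p$: by Example \ref{epr}, the $n$-dimensional algebraic torus $T\subseteq H$ contains the elementary abelian $p$-subgroup $(\mu_p)^n$ of rank $n$ (using that $\overline k$ contains all $p$-th roots of unity). The role of the hypothesis on $H$ is exhausted by this observation, so it remains to exhibit a constant $R_n$, depending only on $n$, such that every elementary abelian $p$-subgroup $E$ of $\Cn$ has rank at most $n$ whenever $p>R_n$.

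To prove this upper bound I would first regularize $E$: by the combination of \cite[Thm.\,1]{Ro56} and \cite{BM97} recalled in Subsection \ref{deff}, $E$ acts faithfully and regularly on a smooth complete irreducible rational variety $X$ of dimension $n$. I would then invoke a fixed-point theorem of the following form: there is a constant $R_n$, depending only on $n$, such that for every $p>R_n$ and every finite $p$-subgroup $E$ of the birational group of a rationally connected $n$-fold, the regularization $X$ can be chosen so that $X^E\neq\varnothing$. Granted such a point $x\in X^E$, the tangent representation $E\to \GL({\rm T}_{x,X})$ is faithful by \cite[Lem.\,4]{Po14_1} --- the same input already used in the proofs of Theorems \ref{fipo} and \ref{nrdi}(i) --- so $E$ embeds in $\GL_n$. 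Since ${\rm char}\,k=0$ every element of $E$ is of finite order coprime to the characteristic and hence semisimple, and since $E$ is abelian its elements can be simultaneously diagonalized; consequently $E$ is conjugate to a subgroup of the standard maximal torus $(\overline k^{\times})^n$ of $\GL_n$, whose $p$-torsion is $(\mu_p)^n$. Therefore the rank of $E$ does not exceed $n$, as required. The ``in particular'' assertion follows at once on applying the bound just obtained to $\GL_n\subseteq {\rm Aut}(\bAn)\subseteq \Cn$, each of which contains the diagonal $n$-dimensional torus.

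The only nontrivial step of this plan is the fixed-point assertion; everything else is a routine combination of tools already in play in the paper. A smooth projective rationally connected variety carrying a $p$-action need not have a fixed point on a given model, and no naive Euler characteristic count on a single model will suffice. The suitable statement would be extracted by running the $E$-equivariant minimal model program and applying the boundedness of Fano varieties (the BAB conjecture, \cite[Thm.\,3.7]{Bi17}) to reduce to actions on a bounded family of Fano $n$-folds, where a fixed point can be produced once $p$ exceeds a bound depending only on $n$. This is essentially the same package of ideas that powers the Jordan property of $\Cn$ cited in the footnote in the Introduction and used throughout Section \ref{2}, and it is the main obstacle to carrying out the plan.
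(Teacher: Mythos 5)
Your argument is correct in outline and, in substance, follows the same route as the paper: the lower bound ${\rm rk}_p(H)\geqslant n$ via Example \ref{epr} is exactly the paper's first step, and the whole content of the theorem is the upper bound ${\rm rk}_p({\rm Cr}_n)\leqslant n$ for $p$ large. The difference is how that upper bound is handled: the paper simply combines \cite[Thm.\;1.10]{PS16} with \cite[Cor.\;1.3]{Bi17}, which together say precisely that for $p$ greater than a constant depending only on $n$ every finite $p$-subgroup of ${\rm Bir}_k(X)$, $X$ rationally connected of dimension $n$, is abelian and generated by at most $n$ elements; this gives ${\rm rk}_p(H)\leqslant n$ at once, with no need for your regularization, fixed-point, and tangent-representation steps. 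What you flag as ``the main obstacle'' --- a fixed-point statement for $p$-groups with $p$ large, obtained by equivariant MMP plus BAB --- is not an open issue but is essentially the proof mechanism of the cited Prokhorov--Shramov theorem (made unconditional by Birkar), so your instinct about how such a bound is produced is sound; however, as written your proposal is incomplete at exactly that point, and it should be closed by invoking \cite[Thm.\;1.10]{PS16} and \cite[Cor.\;1.3]{Bi17} rather than by re-deriving them. Your remaining steps (faithfulness of the tangent action by \cite[Lem.\;4]{Po14_1}, simultaneous diagonalization of an abelian group of semisimple elements in $\GL_n$) are correct but become redundant once the citation is used in the form ${\rm rk}_p({\rm Cr}_n)\leqslant n$.
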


\begin{proof} Let $p$ be a prime integer.\;It follows from Example \ref{epr} and the condition on $H$ that ${\rm rk}_p(H)\geqslant n$.\;On the other hand, combining
\cite[Thm.\;1.10]{PS16} with \cite[Cor.\;1.3]{Bi17}, we conclude that if $p$ is bigger than a certain constant $R_n$, depending on  $n$, then
${\rm rk}_p({\rm Cr}_n)\leqslant n$, and therefore, ${\rm rk}_p(H)\leqslant n$.
This completes the proof.
\end{proof}

The other examples of groups $G$, for which one manages to compute their $p$-rank, are
connected affine algebraic groups over $\overline k$ and connected real Lie groups.\;All the maximal tori in such a $G$ are conjugate;
$$\mbox{let $r(G)$ be the dimension of maximal tori of the group $G$}.$$
Recall that a prime integer $p$ is called  {\it a torsion prime of the group $G$} if $G$ has a finite Abelian $p$-subgroup not lying in any maximal torus.\;The torsion primes of the group $G$ divide the order of its Weyl group, so the set of all such primes is finite.

\begin{theorem}\label{al} Let $G$ be either a connected affine algebraic group
over $\overline k$ or a connected real Lie group.\;Suppose that a prime integer
$p$ is not a torsion prime of the group $G$.\;Then
${\rm rk}_p(H)=r(G)$ for any {\rm(}not necessarily closed\,{\rm)} subgroup  $H$ of $G$, containing a maximal torus of the group $G$.
\end{theorem}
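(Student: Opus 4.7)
The plan is to sandwich $\mathrm{rk}_p(H)$ between $r(G)$ (from below via the maximal torus sitting inside $H$) and $r(G)$ (from above via the assumption that $p$ is not a torsion prime of $G$). Both bounds reduce the problem to analysing $p$-torsion inside a single torus.

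For the lower bound, observe that by hypothesis $H$ contains a maximal torus $T$ of $G$, and $\dim(T)=r(G)$. Example \ref{epr} then yields $\mathrm{rk}_p(T)=r(G)$, and since $T\subseteq H$ this gives $\mathrm{rk}_p(H)\geqslant r(G)$.

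For the upper bound, let $A\subseteq H$ be an arbitrary elementary abelian $p$-subgroup. Then $A$ is a finite Abelian $p$-subgroup of $G$, so by the very definition of ``torsion prime'' (and the assumption that $p$ is not one), $A$ is contained in some maximal torus $T'$ of $G$. Since all maximal tori of $G$ are conjugate and of dimension $r(G)$, in either of the two categories under consideration the subgroup of $p$-torsion points $T'[p]$ is isomorphic to $(\mathbb{Z}/p\mathbb{Z})^{r(G)}$ (in the algebraic case because $T'\cong(\overline k^{\times})^{r(G)}$ and $\overline k^{\times}[p]\cong \mathbb{Z}/p\mathbb{Z}$ in characteristic zero; in the Lie case because $T'\cong (S^{1})^{r(G)}$ and $S^{1}[p]\cong \mathbb{Z}/p\mathbb{Z}$). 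Being elementary abelian of exponent $p$, the group $A$ lies in $T'[p]$, and therefore $\mathrm{rk}(A)\leqslant r(G)$. Taking the supremum over all such $A$ gives $\mathrm{rk}_p(H)\leqslant r(G)$, which combined with the previous paragraph completes the proof.

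I do not expect a substantive obstacle: once the definition of ``torsion prime'' is unpacked, the two bounds are immediate consequences of the structure of $p$-torsion in a torus. The only point that deserves a brief remark is the verification that $T'[p]\cong(\mathbb{Z}/p\mathbb{Z})^{r(G)}$ in both the algebraic and real Lie setting, which is where the choice of the two ambient categories actually matters and where the argument needs to be stated uniformly.
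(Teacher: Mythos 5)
Your proof is correct and follows essentially the same route as the paper: the lower bound comes from the maximal torus contained in $H$ (Example \ref{epr}), and the upper bound from the fact that, $p$ not being a torsion prime, every elementary Abelian $p$-subgroup lies in a maximal torus, whose $p$-torsion has rank $r(G)$. Your explicit identification of $T'[p]\cong(\mathbb{Z}/p\mathbb{Z})^{r(G)}$ just makes concrete what the paper delegates to Example \ref{epr}.
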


\begin{proof} It follows from Example \ref{epr} and the condition on $H$ that
${\rm rk}_p(H)\geqslant r(G)$.
On the other hand, if
$F$ is a finite elementary  $p$-subgroup of  $H$, then
 $F$ lies in a maximal torus of the group $G$ because
 $p$ is not a torsion prime of $G$. In view of Example
\ref{epr}, this implies that the rank of  $F$ does not exceed  $r(G)$.\;Therefore, ${\rm rk}_p(H)\leqslant r(G)$.\;This completes the proof.
\end{proof}

From  Theorems \ref{Rd} and \ref{al} we obtain:

 \begin{corollary}\label{c1}
   Let $k$ be an algebraically closed field of characteristic zero.
   We associate with each positive integer $d$ any {\rm(}not necessarily closed\,{\rm)} subgroup $H_d$ from the following list:
 \begin{enumerate}[\hskip 4.2mm\rm(1)]
 \item a subgroup of the group ${\rm Cr}_d(k)$, containing a $d$-dimensional algebraic torus;
 \item a subgroup of any connected affine algebraic group  $G$ over $k$ with $r(G)=d$, containing a maximal torus of the goup $G$;
     \item a subgroup of any connected real Lie group Ли $G$ with $r(G)=d$,
 containing a maximal torus of the goup $G$.
 \end{enumerate}
 Then the group $H_n$ is nonembeddable in the group  $H_m$ if $n>m$. In particular,
 the following properties are equivalent:
  \begin{enumerate}[\hskip 4.2mm\rm(a)]
 \item the groups $H_n$ and $H_m$  are isomorphic;
 \item $n= m$.
 \end{enumerate}
\end{corollary}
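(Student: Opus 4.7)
The plan is to derive a contradiction from any hypothetical embedding $H_n\hookrightarrow H_m$ with $n>m$ by exhibiting a prime $p$ for which ${\rm rk}_p(H_n)>{\rm rk}_p(H_m)$. The argument rests on the elementary monotonicity of the $p$-rank under group embeddings (noted in the paragraph preceding Theorem \ref{Rd}), combined with the exact computations of $p$-rank supplied by Theorems \ref{Rd} and \ref{al}.

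First I would observe that in each of the three cases (1)--(3), the hypotheses of the relevant $p$-rank theorem apply to $H_d$ itself: $H_d$ contains a $d$-dimensional algebraic torus in case (1), and contains a maximal torus of the ambient group $G$ in cases (2) and (3). Hence there is a finite set $E_d$ of primes, depending only on $H_d$, such that ${\rm rk}_p(H_d)=d$ for every prime $p\notin E_d$. Explicitly, $E_d=\{p\mid p\leqslant R_d\}$ in case (1) (with $R_d$ as in Theorem \ref{Rd}), while in cases (2) and (3) one takes for $E_d$ the set of torsion primes of the ambient affine algebraic group or real Lie group $G$, which is finite because the torsion primes of $G$ divide the order of its Weyl group.

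Given $n>m$ and any choice of $H_n$, $H_m$ from the lists, I would then pick any prime $p\notin E_n\cup E_m$; such a $p$ exists since $E_n\cup E_m$ is finite. For this $p$ we have
\[
{\rm rk}_p(H_n)=n>m={\rm rk}_p(H_m),
\]
so $H_n$ cannot embed into $H_m$. The equivalence of (a) and (b) then follows immediately: if $H_n\cong H_m$, the resulting embeddings in both directions force $n\leqslant m$ and $m\leqslant n$, hence $n=m$; the converse is trivial.

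I do not anticipate any genuine obstacle: the proof is a direct synthesis of Theorems \ref{Rd} and \ref{al} with the monotonicity of $p$-rank under embeddings. The only point requiring a moment's attention is the verification that the exceptional set $E_d$ is finite in every case, so that a common admissible prime $p$ can always be chosen; this is transparent from the statements of the two theorems.
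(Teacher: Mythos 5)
Your argument is exactly the one the paper intends: the corollary is stated as an immediate consequence of Theorems \ref{Rd} and \ref{al} together with the monotonicity of ${\rm rk}_p$ under embeddings, and your choice of a prime $p$ outside the finite exceptional sets (those $\leqslant R_d$, respectively the torsion primes of $G$) makes that deduction explicit and correct. No issues.
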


Let us single out three particular cases as Corollaries \ref{sl1}, \ref{sl2} и \ref{sl3}.

\begin{corollary}[{{\rm  \cite[Thm.\;B]{Ca14}, \cite[Rem.\;1.11]{PS16}}}]\label{sl1}
The group ${\rm Cr}_n$ is embeddable in the group ${\rm Cr}_m$
if and only if  $n\leqslant m$.\;In particular, the groups ${\rm Cr}_n$ and
${\rm Cr}_m$ are isomorphic if and only if  $n=m$.
\end{corollary}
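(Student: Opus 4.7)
The plan is a short deduction from Corollary \ref{c1}. The essential point is that the Cremona group ${\rm Cr}_d$ itself qualifies as an $H_d$ in the sense of item (1) of Corollary \ref{c1}: it contains the standard $d$-dimensional diagonal algebraic torus
\begin{equation*}
T_d=\{(t_1x_1,\ldots,t_dx_d)\mid t_i\in\overline k^\times\}\subset \GL_d\subset {\rm Cr}_d.
\end{equation*}
Hence, once this observation is made, the nonembeddability ${\rm Cr}_n\not\hookrightarrow {\rm Cr}_m$ for $n>m$ follows immediately by applying Corollary \ref{c1} with $H_n={\rm Cr}_n$ and $H_m={\rm Cr}_m$.

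For the reverse direction, when $n\leqslant m$, I would exhibit the obvious ``pad with fixed coordinates'' embedding. Writing any $\sigma\in {\rm Cr}_n$ in the form $\sigma=(\sigma_1,\ldots,\sigma_n)$ as in \eqref{string}, define
\begin{equation*}
\widetilde\sigma:=(\sigma_1,\ldots,\sigma_n,x_{n+1},\ldots,x_m)\in {\rm Cr}_m.
\end{equation*}
The composition rule \eqref{composi} shows at once that $\sigma\mapsto\widetilde\sigma$ is a group homomorphism ${\rm Cr}_n\hookrightarrow {\rm Cr}_m$, and it is injective because the coordinate functions $\sigma_1,\ldots,\sigma_n$ can be recovered from $\widetilde\sigma$. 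This settles the ``if'' direction.

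The ``in particular'' statement about isomorphism is then formal: an isomorphism ${\rm Cr}_n\cong {\rm Cr}_m$ provides embeddings in both directions, which by the first part force $n\leqslant m$ and $m\leqslant n$, hence $n=m$. There is no real obstacle here, since all the heavy lifting has already been done in Theorem \ref{Rd} (via the Jordan property of $\Cn$ and the bound on its $p$-rank for large primes $p$, which in turn relies on the BAB conjecture cited in the footnote). The present corollary is purely a matter of recognising ${\rm Cr}_d$ as a member of the family (1) and writing down the trivial embedding in the easy direction.
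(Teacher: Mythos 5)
Your proposal is correct and follows essentially the same route as the paper, which obtains this statement precisely as the special case of Corollary \ref{c1} with $H_d={\rm Cr}_d$ containing the diagonal $d$-dimensional torus $T_d\subset\GL_d\subset{\rm Cr}_d$ (the heavy input being Theorem \ref{Rd}). Your explicit verification of the easy direction via the ``pad with fixed coordinates'' embedding and the formal isomorphism argument are exactly the standard steps the paper leaves implicit.
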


\begin{corollary}\label{sl2}  The group ${\rm Aut}_{\overline k}^{\hskip -.3mm n}$ is embeddable in the group ${\rm Aut}_{\overline k}^{\hskip -.3mm m}$
if and only if  $n\leqslant m$.\;In particular, the groups ${\rm Aut}_{\overline k}^{\hskip -.3mm n}$ and
${\rm Aut}_{\overline k}^{\hskip -.3mm m}$ are isomorphic if and only if  $n=m$.
\end{corollary}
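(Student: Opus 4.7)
The plan is to reduce Corollary \ref{sl2} to the already-established Corollary \ref{c1}, and to handle the easy ``if'' direction by an elementary construction. Throughout, I write $H_d := {\rm Aut}(\mathbf{A}^d_{\overline k})$.

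For the sufficiency direction (the case $n \leqslant m$), I would exhibit an explicit embedding. Identifying $\mathbf{A}^m_{\overline k}$ with $\mathbf{A}^n_{\overline k} \times \mathbf{A}^{m-n}_{\overline k}$ via the splitting of coordinates, the assignment $\sigma \mapsto \sigma \times {\rm id}_{\mathbf{A}^{m-n}}$ defines a group homomorphism $H_n \to H_m$. Injectivity is immediate: restricting the image back to the first $n$ coordinates recovers~$\sigma$.

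For the necessity direction (the case $n > m$), I would invoke Corollary \ref{c1}(1) with the $H_d$ above, viewed as a subgroup of ${\rm Cr}_d(\overline k)$ through the canonical inclusion ${\rm Aut}(\mathbf{A}^d) \subset {\rm Bir}(\mathbf{A}^d) = {\rm Cr}_d$. The only hypothesis to check is that $H_d$ contains a $d$-dimensional algebraic torus, and indeed the standard diagonal subgroup
\[
\{(t_1 x_1,\ldots, t_d x_d) \mid t_1,\ldots,t_d \in {\overline k}^{\times}\} \subset \GL_d \subset H_d
\]
is such a torus. The conclusion of Corollary \ref{c1} gives the nonembeddability of $H_n$ in $H_m$, completing the main assertion. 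The ``in particular'' clause then follows formally, since any isomorphism yields mutual embeddings forcing both $n \leqslant m$ and $m \leqslant n$.

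The only substantive input here is Corollary \ref{c1} itself, which in turn rests on the $p$-rank bound ${\rm rk}_p({\rm Cr}_d) \leqslant d$ for all primes $p > R_d$ coming from Theorem \ref{Rd} (and ultimately from \cite{PS16} combined with \cite{Bi17}); that is where all the real work sits. With that machinery already available at this point in the paper, the present corollary is a clean specialization, and I do not anticipate any further obstacle beyond the torus verification above.
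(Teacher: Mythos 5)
Your proposal is correct and follows essentially the same route as the paper: Corollary \ref{sl2} is stated there as a direct specialization of Corollary \ref{c1}(1), with ${\rm Aut}({\bf A}^{d})\subset{\rm Cr}_d$ containing the diagonal $d$-dimensional torus (exactly your verification), the easy direction being the standard inclusion $\sigma\mapsto\sigma\times{\rm id}$. No gaps.
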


Let $K_0:=k$ and $K_i:=k(x_1,\ldots, x_i)$ for $i=1,\ldots, n$.\;For any
 $a_i, b_i\in K_{i-1}$, $a_i\neq 0$,
   the map
   \eqref{string}, where $\sigma_i=a_ix_i+b_i$ for every $i$, is an element of the group ${\rm Cr}_n(k)$, and the set ${\rm B}_n(k)$ of all elements is a subgroup of  ${\rm Cr}_n(k)$.\;According to \cite{Po17}, the group ${\rm B}_n:={\rm B}_n({\overline k})$ is a Borel subgroup of ${\rm Cr}_n$; it contains the $n$-dimensional diagonal torus of the group ${\rm GL}_n$.

   \begin{corollary}\label{sl3}  The group ${\rm B}_n$ is embeddable in the group ${\rm B}_m$ if and only if
 $n\leqslant m$.\;In particular, the groups ${\rm B}_n$ and
${\rm B}_m$ are isomorphic if and only if  $n=m$.
\end{corollary}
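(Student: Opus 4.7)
The plan is to deduce Corollary \ref{sl3} directly from Corollary \ref{c1}(1), together with an elementary explicit construction for the ``if'' direction. Observe that by the description in the statement, an element of $B_n$ is determined by a tuple $(\sigma_1,\ldots,\sigma_n)$ with $\sigma_i = a_i x_i + b_i$, where $a_i,b_i \in K_{i-1}$ and $a_i\neq 0$. In particular, $B_n$ contains the $n$-dimensional diagonal torus $\{(t_1 x_1,\ldots,t_n x_n)\mid t_i\in {\overline k}^\times\}$ of $\GL_n$, as asserted in the statement preceding the corollary.

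For the ``if'' direction ($n\leqslant m$), I would write down an explicit embedding $\iota\colon B_n\hookrightarrow B_m$ by extending each element of $B_n$ by the identity on the additional coordinates: send the tuple $(\sigma_1,\ldots,\sigma_n)$ to $(\sigma_1,\ldots,\sigma_n,x_{n+1},\ldots,x_m)$. For every $i\in\{n+1,\ldots,m\}$ the prescription $\sigma_i=x_i$ fits the form $a_ix_i+b_i$ with $a_i=1$, $b_i=0$, both of which lie in $K_{i-1}$, so the image is indeed in $B_m$. A straightforward check using the composition formula \eqref{composi} (the new coordinates $x_{n+1},\ldots,x_m$ do not appear in $\sigma_1,\ldots,\sigma_n$) shows that $\iota$ is a group homomorphism, and it is clearly injective.

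For the ``only if'' direction ($n>m$ implies nonembeddability), I would simply invoke Corollary \ref{c1}. Since $B_n\subseteq \Cr_n$ contains the $n$-dimensional diagonal torus of $\GL_n$, the group $B_n$ qualifies as an instance of $H_n$ in the sense of item (1) of Corollary \ref{c1}; likewise $B_m$ is an instance of $H_m$. Corollary \ref{c1} then asserts that $H_n$ is nonembeddable in $H_m$ when $n>m$, which yields exactly the desired nonembeddability of $B_n$ into $B_m$. The final assertion about isomorphism is an immediate consequence: if $B_n\cong B_m$ then each embeds in the other, forcing $n\leqslant m$ and $m\leqslant n$.

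There is no real obstacle here beyond verifying that $B_n$ satisfies the hypotheses of Corollary \ref{c1}(1), which is immediate from the explicit form of elements of $B_n$; the substantive input, namely the computation $\mathrm{rk}_p(H)=n$ for any subgroup $H\subseteq \Cr_n$ containing an $n$-dimensional algebraic torus and $p>R_n$ via Theorem \ref{Rd}, has already been carried out. Thus the proof should occupy only a few lines, citing the construction of $\iota$ for the easy direction and Corollary \ref{c1} for the hard one.
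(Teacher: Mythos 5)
Your proposal is correct and follows essentially the same route as the paper: the paper states Corollary \ref{sl3} as a direct particular case of Corollary \ref{c1}, using the fact (recorded just before the corollary) that ${\rm B}_n$ contains the $n$-dimensional diagonal torus of ${\rm GL}_n$, so it qualifies as $H_n$ in item (1). Your explicit coordinate-extension embedding for $n\leqslant m$ is the standard inclusion the paper leaves implicit, so nothing is missing.
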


As another application, we get

\begin{corollary}\label{conti} If $\varphi\colon {\rm Cr}_n\to {\rm Cr}_m$ is a continuous epimorphism of groups endowed with the Zariski topology, then  $n=m$ and $\varphi$ is an automorphism.
\end{corollary}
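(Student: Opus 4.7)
My plan is to reduce to Corollary~\ref{sl1} by showing that $\varphi$ must be injective; once $\varphi$ is a group isomorphism, Corollary~\ref{sl1} immediately yields $n=m$, and $\varphi$ is then an automorphism of $\Cr_n$.

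The strategy begins by exploiting continuity. The kernel $K:=\ker\varphi$ is a Zariski-closed normal subgroup of $\Cr_n$, and the restriction of $\varphi$ to any closed algebraic subgroup of $\Cr_n$ is a morphism of algebraic groups. Apply this to the diagonal maximal torus $T_n\subseteq\GL_n\subseteq\Cr_n$: the image $T':=\varphi(T_n)$ is a closed, connected, commutative subgroup of $\Cr_m$, hence an algebraic subtorus of dimension $d\leqslant n$, and $K\cap T_n$ is a closed algebraic subgroup of $T_n$ of dimension $n-d$. Next, I would invoke the algebraic-group principle (transported to our setting via the continuous-algebraic nature of $\varphi$ on algebraic subgroups) that under a surjective homomorphism from a connected group the image of a maximal torus is a maximal torus. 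Together with the connectedness of $\Cr_n$ proved in Section~\ref{1}, this forces $T'$ to be a maximal torus of $\Cr_m$, so $d=m$ and $m\leqslant n$.

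For the reverse inequality, I assume for contradiction that $m<n$. Then $\dim(K\cap T_n)=n-m>0$, so $K$ contains a nontrivial algebraic subtorus $S\subseteq T_n$. Normality of $K$ in $\Cr_n$ implies that every $\Cr_n$-conjugate of $S$ lies in $K$. The key geometric step is to show that the union $\bigcup_{g\in\Cr_n}gSg^{-1}$ is Zariski-dense in $\Cr_n$ (equivalently, $\Cr_n$ has no proper Zariski-closed normal subgroup meeting $T_n$ in a positive-dimensional algebraic subgroup). Since $K$ is closed, density would force $K=\Cr_n$, contradicting surjectivity onto the nontrivial group $\Cr_m$. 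Hence $m=n$. An analogous density argument applied to the finite part of $K\cap T_n$, together with the vanishing of the center of $\Cr_n$, then gives $K\cap T_n=\{1\}$; finally, since the $\Cr_n$-conjugates of $T_n$ cover a dense subset of $\Cr_n$, one upgrades $K\cap T_n=\{1\}$ to $K=\{1\}$.

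The main obstacle is precisely the density assertion in the third step: that the $\Cr_n$-conjugates of a nontrivial algebraic subgroup of $T_n$ form a Zariski-dense subset of $\Cr_n$. This goes beyond what is explicitly proved in the paper, and to justify it I would combine the linear connectedness of $\Cr_n$ (Section~\ref{1}) with a dimension count on the conjugation orbit: a generic element of $T_n$ has $\Cr_n$-conjugacy class covering the natural generic stratum of $\Cr_n$, so its closure must be all of $\Cr_n$. Once this density property is in hand, the preceding analysis shows $K=\{1\}$, $\varphi\colon\Cr_n\xrightarrow{\sim}\Cr_m$ is a group isomorphism, and Corollary~\ref{sl1} gives $n=m$, so $\varphi$ is an automorphism.
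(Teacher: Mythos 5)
There is a genuine gap, and it sits exactly where you flagged it: the density assertion that the ${\rm Cr}_n$-conjugates of a nontrivial algebraic subgroup of $T_n$ are Zariski-dense in ${\rm Cr}_n$ is not proved, and your proposed justification (``a dimension count on the conjugation orbit'', a ``generic stratum'' of ${\rm Cr}_n$) does not make sense for the Cremona group, which is not an algebraic group and is infinite-dimensional; no such stratification or orbit-dimension argument is available, and linear connectedness gives nothing of the sort. What you actually need is the statement that ${\rm Cr}_n$ has no proper nontrivial closed normal subgroups, i.e.\ the topological simplicity of ${\rm Cr}_n$ in the Zariski topology. This is a deep theorem of Blanc and Zimmermann, and it is precisely what the paper invokes: since $\varphi$ is continuous, $\ker\varphi$ is a closed normal subgroup, topological simplicity forces it to be trivial (it cannot be all of ${\rm Cr}_n$ because ${\rm Cr}_m\neq\{1\}$), so $\varphi$ is an abstract isomorphism and Corollary \ref{c1} gives $n=m$. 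Your route essentially tries to reprove a special case of that simplicity theorem by hand, which cannot be done by the means you indicate.

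There are further unjustified steps along the way. Continuity of $\varphi$ in the Zariski topology does not imply that its restriction to an algebraic subgroup such as $T_n$ is a morphism of algebraic groups, so you may not conclude that $\varphi(T_n)$ is a closed algebraic subtorus, nor apply the ``image of a maximal torus is a maximal torus'' principle, nor read off $\dim(K\cap T_n)=n-m$; none of the torus bookkeeping in your second paragraph is licensed by the hypotheses. Finally, even granting density of $\bigcup_{g}gT_ng^{-1}$, the last step fails logically: a closed normal subgroup meeting every conjugate of $T_n$ trivially need not be trivial, since its elements need not lie in any conjugate of $T_n$. The correct and short argument is the paper's: topological simplicity of ${\rm Cr}_n$ plus Corollary \ref{c1} (equivalently Corollary \ref{sl1}), with no torus analysis needed.
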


\begin{proof} In view of the topological simplicity of the group ${\rm Cr}_n$ (see \cite[Thm1.\;]{BZ18}), the kernel of the epimorphism $\varphi$ is trivial,
 and therefore it is an isomorphism of abstract groups. The statement
  now follows from Corollary
  \ref{c1}.
\end{proof}

\begin{remark}{\rm
According to \cite{Ur18}, it follows from \cite{BLZ18} the existence of an abstract group epimorphism
${\rm Cr}_3\to {\rm Cr}_2$.\;This shows that the assumption of conti\-nuity in
Corollary \ref{conti} is essential.\;On the other hand,
 ${\rm Cr}_2$ is a Hopfian abstract group, i.\,e., every its (not necessarily continuous)
 surjective endomorphism is an auto\-mor\-phism \cite{De07}.
}
\end{remark}

In the following theorem is used not the exact value of the $p$-rank of a group, but its upper bound.

\begin{theorem}\label{tp111} Let $M$ be a connected compact $n$-dimensional topological manifold and let $B_M$ be the sum of its Betti numbers with respect to homology with  coefficients in\;$\mathbf Z$.\;If
\begin{equation}\label{topineq}
d>\frac{\sqrt{n^2+4n(n+1)B_M}+n}{2}+{\rm log}_2B_M,
\end{equation}
then the Cremons group ${\rm Cr}_d$ is nonembeddable in the homeomorphism group  ${\mathscr H}(M)$ of the manifold $M$.
\end{theorem}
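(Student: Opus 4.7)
The plan is a $p$-rank obstruction. Suppose for contradiction that $\mathrm{Cr}_d$ embeds in $\mathscr H(M)$ as an abstract group. Choose a prime $p$ that exceeds both the constant $R_d$ of Theorem \ref{Rd} and every prime dividing the torsion of $H_\ast(M;\mathbf Z)$; the latter condition ensures $\sum_j b_j(M;\mathbf F_p)=B_M$. By Theorem \ref{Rd}, applied to the $p$-torsion of the diagonal torus $T\subset\mathrm{GL}_d\subset\mathrm{Cr}_d$, the subgroup $G:=T[p]\cong(\mathbf Z/p\mathbf Z)^d$ sits inside $\mathrm{Cr}_d$ and, via the hypothetical embedding, acts faithfully on $M$ by homeomorphisms. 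It therefore suffices to contradict \eqref{topineq} by bounding $d=\mathrm{rk}(G)$ above by $\tfrac{1}{2}\bigl(\sqrt{n^2+4n(n+1)B_M}+n\bigr)+\log_2 B_M$.

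To produce this bound I would decompose $G$ according to its action on $\mathbf F_p$-cohomology: let $G'$ be the kernel of the natural homomorphism $G\to\mathrm{GL}(H^\ast(M;\mathbf F_p))$. The image $G/G'$ is an elementary abelian $p$-subgroup of $\mathrm{GL}(H^\ast(M;\mathbf F_p))$ acting faithfully on a space of total $\mathbf F_p$-dimension $B_M$; a Mann--Su--style count of orbits on a basis filtered by cohomological degree yields $\mathrm{rk}(G/G')\leq\log_2 B_M$. For the cohomologically trivial subgroup $G'$, whose rank satisfies $r':=\mathrm{rk}(G')\geq d-\log_2 B_M$, I would apply Smith theory along a composition series $\{e\}=G'_0\subset G'_1\subset\cdots\subset G'_{r'}=G'$ with cyclic quotients $G'_i/G'_{i-1}\cong\mathbf Z/p\mathbf Z$. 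Each fixed locus $F_i:=M^{G'_i}$ is an $\mathbf F_p$-cohomology manifold satisfying $\dim F_i\leq\dim F_{i-1}$ and $\sum_j b_j(F_i;\mathbf F_p)\leq\sum_j b_j(F_{i-1};\mathbf F_p)\leq B_M$. Iterating Borel's localization formula $n-\dim M^{H}=\sum_{[H:K]=p}\bigl(\dim M^{K}-\dim M^{H}\bigr)$ at every intermediate subgroup and balancing codimension drops against cumulative Betti-sum drops should yield the quadratic inequality $r'(r'-n)\leq n(n+1)B_M$, hence $r'\leq\tfrac{1}{2}\bigl(\sqrt{n^2+4n(n+1)B_M}+n\bigr)$. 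Combining, $d\leq r'+\mathrm{rk}(G/G')$ is bounded as required.

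The principal obstacle is the quadratic Smith--Borel bound on $r'$. The naive estimate $r'\leq n$, available from $\dim F_i\in[0,n]$ alone, is weaker than the target by roughly a $\sqrt{B_M}$ factor; the sharper form $r'(r'-n)\leq n(n+1)B_M$ requires Mann--Su--style simultaneous tracking of codimensions and $\mathbf F_p$-Betti sums along the entire flag of subgroups of $G'$, invoking Borel's formula at every intermediate stage rather than only at the top, and trading each unit of codimension drop against a controlled decrement in cumulative Betti count. The corank bound $\mathrm{rk}(G/G')\leq\log_2 B_M$, which isolates the logarithmic contribution from the ``cohomologically nontrivial'' part, and the concluding algebraic rearrangement to \eqref{topineq} are comparatively routine.
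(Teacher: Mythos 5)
Your outer strategy is exactly the paper's: pick a prime $p>R_d$ not dividing the torsion of $H_*(M;\mathbf Z)$ so that $B_{M,p}=B_M$ by universal coefficients, use Theorem \ref{Rd} to get $\mathrm{rk}_p(\mathrm{Cr}_d)=d$, and contradict \eqref{topineq} by bounding the rank of elementary $p$-subgroups of ${\mathscr H}(M)$. The difference is that the paper obtains that bound in one stroke by citing Mann--Su \cite[Thm.\;2.5(3)]{MS63}, which states precisely that any finite elementary $p$-subgroup of ${\mathscr H}(M)$ has rank at most $\bigl(\sqrt{n^2+4n(n+1)B_{M,p}}+n\bigr)/2+\log_2 B_{M,p}$, whereas you attempt to re-derive this bound, and that re-derivation is where your proposal has a genuine gap.

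Concretely: (1) your claim that $\mathrm{rk}(G/G')\leq\log_2 B_M$ because $G/G'$ acts faithfully on the $B_M$-dimensional space $H^*(M;\mathbf F_p)$ is unjustified and false as a statement about linear actions --- in characteristic $p$ every element of order $p$ in $\mathrm{GL}_{B}(\mathbf F_p)$ is unipotent, and the unipotent upper-triangular group contains elementary abelian $p$-subgroups of rank on the order of $B^2/4$, vastly exceeding $\log_2 B$; even a faithful permutation action on a $B$-element basis only yields a bound like $B/p$, not $\log_2 B$. The logarithmic term in Mann--Su does not come from naive faithfulness of the cohomology representation, so this half of your splitting needs a different (and nontrivial) argument. (2) The quadratic bound $r'(r'-n)\leq n(n+1)B_M$ for the cohomologically trivial part is only asserted (``should yield''), and you yourself flag it as the principal obstacle; iterating Smith theory and Borel localization along a composition series while simultaneously controlling codimension drops and $\mathbf F_p$-Betti sums is exactly the technical content of the Mann--Su theorem, not a routine verification. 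As written, the proposal therefore proves the theorem only modulo the very inequality that carries all the weight. The fix is immediate --- invoke \cite[Thm.\;2.5(3)]{MS63} as the paper does, at which point your argument reduces to the paper's proof --- but without that citation (or a genuine proof of the rank bound) the argument is incomplete.
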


\begin{proof} Suppose that the inequality \eqref{topineq} holds.

Let $p\!>\!2$ be a prime integer satisfying the conditions:
\begin{enumerate}[\hskip 2.0mm\rm(i)]
\item $p>R_d$ (see Theorem \ref{Rd});
\item $p$ does not divide the order of the finite Abelian group
 $\bigoplus_{i=0}^n{\rm Tors}(H_i(M, \mathbf Z))$.
\end{enumerate}

It follows from \cite[Thm.\;2.5(3)]{MS63} that the rank of any finite elementary $p$-subgroup of the group
${\mathscr H}(M)$ does not exceed  $(\sqrt{n^2+4n(n+1)B_{M,p}}+n)/{2}+{\rm log}_2B_{M, p}$, where $B_{M,p}$ is the sum of Betti numbers of the manifold $M$
with respect to homology with coefficients in  ${\mathbf F}_p$.\;It follows from (ii)
and the universal coefficients theorem that
 $B_{M,p}=B_M$, whence, in view of \eqref{topineq},
we obtain the inequality
$d>{\rm rk}_p({\mathscr H}(M))$. From it, the condition (i), and  Theorem \ref{Rd}
we infer that ${\rm rk}_p({\rm Cr}_d)>{\rm rk}_p({\mathscr H}(M))$.
This completes the proof.
\end{proof}

According to \cite[Thm.\;1.10]{PS17}, \cite[Cor.\;1.3]{Bi17}, the constant  $R_d$ from Theorem \ref{Rd} can be chosen so that, for any  rationally connected
$d$-dimensional variety $X$ defined over $k$ and any prime integer $p>R_d$, the inequality  ${\rm rk}_p({\rm Bir}_k (X))\leqslant d$ holds.
From this, another statement about nonembeddable groups follows:

\begin{theorem}\label{t15} Let $X$ be a rationally connected $n$-dimensional variety $X$ defined over $k$, and let $p$ be a prime integer bigger than the constant $R_n$ from Theorem {\rm \ref{Rd}}\;Then any product of groups $G_1\times\cdots\times G_s$, each of which contains an element of order $p$, is nonembeddable
in the group ${\rm Bir}_k (X)$ if $s>d$.
\end{theorem}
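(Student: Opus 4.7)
The plan is to imitate the $p$-rank obstruction used in the proof of Theorem \ref{tp111}, replacing the topological input by the hypothesis that each factor $G_i$ already contains an element of order $p$. The whole argument rests on comparing the $p$-rank of $G_1\times\cdots\times G_s$ with the upper bound on ${\rm rk}_p({\rm Bir}_k(X))$ recalled in the paragraph preceding the statement.

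First I would exhibit inside $G_1\times\cdots\times G_s$ an elementary abelian $p$-subgroup of rank $s$. Concretely, pick for each $i$ an element $g_i\in G_i$ of order $p$, which exists by hypothesis, and form the subgroup $\langle g_1\rangle\times\cdots\times\langle g_s\rangle$. This is isomorphic to $(\mathbf Z/p\mathbf Z)^{s}$, hence is elementary abelian of rank $s$, so ${\rm rk}_p(G_1\times\cdots\times G_s)\geqslant s$. If an embedding $G_1\times\cdots\times G_s\hookrightarrow {\rm Bir}_k(X)$ existed, the inequality
\begin{equation*}
{\rm rk}_p\bigl({\rm Bir}_k(X)\bigr)\geqslant {\rm rk}_p(G_1\times\cdots\times G_s)\geqslant s
\end{equation*}
would follow, since taking elementary $p$-subgroups is monotone with respect to inclusions of groups.

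Second, I would quote the bound recorded just before the theorem: by combining \cite[Thm.\;1.10]{PS17} with \cite[Cor.\;1.3]{Bi17}, the constant $R_n$ from Theorem \ref{Rd} can be chosen so that ${\rm rk}_p({\rm Bir}_k(X))\leqslant n$ for every rationally connected $n$-dimensional $X$ over $k$ and every prime $p>R_n$. Together with the previous inequality this gives $s\leqslant n$, contradicting the hypothesis $s>n$ (reading the exponent $d$ in the theorem as $n$, in accordance with the setup) and completing the proof.

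There is really no hard step here: the substance of the theorem sits in the cited $p$-rank bound for ${\rm Bir}_k(X)$, whose proof depends on the BAB conjecture established in \cite{Bi17}. The only minor point to mind is that the elementary abelian $p$-subgroup produced in the first step is genuinely of rank $s$, which is automatic because the factors $\langle g_i\rangle$ intersect trivially in the direct product; no subtlety about normality or faithfulness of the embedding is needed, since elementary $p$-rank is obviously monotone under injective group homomorphisms.
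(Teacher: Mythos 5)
Your proposal is correct and is exactly the argument the paper intends (the paper leaves the proof implicit, as the theorem is stated as an immediate consequence of the cited bound ${\rm rk}_p({\rm Bir}_k(X))\leqslant n$ for $p>R_n$ from \cite{PS17} and \cite{Bi17}): the subgroup $\langle g_1\rangle\times\cdots\times\langle g_s\rangle\cong(\mathbf Z/p\mathbf Z)^s$ forces ${\rm rk}_p$ of any overgroup to be at least $s$, which contradicts that bound when $s>n$. Your reading of the exponent ``$d$'' in the statement as the dimension $n$ is the correct interpretation of what is evidently a typo.
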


\section{Connectedness of the Cremona groups}\label{1}
\subsection{A new proof of the connectedness theorem}

Two elements $\sigma$ and $\tau
\in \Cn(k)$ are called {\it linearly connected} if there exist a
$k$-defined open subset $U$ of the affine line $\mathbf A\!^1$ and a
$k$-morphism $\varphi\colon U\to \Cn$ such that $\sigma, \tau
\in \varphi(U(k))$.\;It is easy to verify that the relation of being linearly connected is an equivalence relation on $\Cn(k)$
 (see \cite[p.\;363]{Bl10}).\;By definition,  {\it linear connectedness} of the group $\Cn(k)$ means that there
is only one equivalence class of this equivalence relation.  Linear connectedness of the group $\Cn(k)$ implies its connectedness.

\begin{theorem}[{{\rm \cite{BZ18}}}]\label{con} The Cremona group $\Cn(k)$ is liearly connected if the field  $k$ is infinite.
\end{theorem}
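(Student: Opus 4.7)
The plan is to show that every element $\sigma\in\Cn(k)$ is linearly connected to the identity, via a scaling argument modelled on Alexander's isotopy. Since linear connectedness is an equivalence relation on $\Cn(k)$, it suffices to exhibit, for a given $\sigma$, a chain of morphisms $\mathbf A^1\to\Cn$ (each defined on an open $k$-subset) passing through $\sigma$, then a linear map, then $\mathrm{id}$. The easy end of this chain lies in $\Aff_n(k)$, which itself is linearly connected: translations are connected to $\mathrm{id}$ via $t\mapsto t\cdot\tau$, and for $g\in\GL_n(k)$ the segment $t\mapsto tg+(1-t)\mathrm{id}$ is a $k$-morphism into $\Cn$ on the complement in $\mathbf A^1$ of the finitely many zeros of $\det\bigl(tg+(1-t)\mathrm{id}\bigr)\in k[t]$, which can be avoided inside $k$ because $k$ is infinite.

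Next I would reduce to an element fixing the origin. The locus in $\bAn$ where $\sigma$ is regular and a local isomorphism is open and nonempty, hence contains a $k$-point $a$ (using that $k$ is infinite); set $b:=\sigma(a)\in\bAn(k)$. The family $\sigma_t:=t_{-tb}\circ\sigma\circ t_{ta}$, with $t_c$ denoting translation by $c\in k^n$, is a $k$-morphism $\mathbf A^1\to\Cn$ with $\sigma_0=\sigma$ and $\sigma':=\sigma_1$ regular at $o$, sending $o$ to $o$, and with invertible differential $d\sigma'_o\in\GL_n(k)$.

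The heart of the argument is the Alexander-type rescaling
\begin{equation*}
\Phi\colon \mathbf A^1\times\bAn\dashrightarrow\bAn,\qquad \Phi(t,x):=t^{-1}\sigma'(tx).
\end{equation*}
Writing each component $\sigma'_i=p_i/q_i$ in lowest terms with $q_i(o)\neq 0$ and $p_i(o)=0$, the expansion $p_i(tx_1,\dots,tx_n)=\sum_{d\geqslant 1}p_i^{(d)}(x)\,t^d$ shows that $t^{-1}p_i(tx)$ is polynomial in $t$ with coefficients in $k[x_1,\dots,x_n]$, so $\Phi$ is $k$-rational and extends across $t=0$ to the linear map $d\sigma'_o$. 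For $t\in k^\times$ the restriction $\Phi(t,\cdot)=h_t^{-1}\circ\sigma'\circ h_t$, with $h_t(x):=tx$, is conjugate to $\sigma'$ and lies in $\Cn(k)$; and $\Phi(0,\cdot)=d\sigma'_o\in\GL_n(k)\subset\Cn(k)$ by the choice of $a$. Concatenating this with the translation homotopy of the previous step and a linear homotopy in $\Aff_n(k)$ yields the desired path from $\sigma$ to $\mathrm{id}$.

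The main obstacle, and the point where the adaptation beyond the settings of \cite{Sh82}, \cite{Po14_2}, \cite{Po17} has to be carried out, is to verify that $\Phi$ truly defines a morphism from an open subset of $\mathbf A^1$ into $\Cn$ in the sense of the Zariski topology of \cite[1.6]{Se10}, not merely a pointwise assignment $t\mapsto\Phi(t,\cdot)\in\Cn(k)$. In the $\Aut(\bAn)$ and affine-triangular cases this is automatic because $\Phi$ is polynomial. In the full rational setting one must simultaneously exhibit an algebraic family of inverses: applying the same rescaling to ${\sigma'}^{-1}$ produces a $k$-rational map $\Psi$ whose specialization at $t\in k^\times$ is $\Phi(t,\cdot)^{-1}$ and at $t=0$ is $(d\sigma'_o)^{-1}$, so that $\Phi$ and $\Psi$ together constitute an algebraic family of elements of $\Cn$ on the open locus of $\mathbf A^1$ on which both are defined, a locus that contains $0$ and $1$.
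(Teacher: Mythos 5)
Your proposal is correct and follows essentially the same route as the paper: an affine normalization at a $k$-point where $\sigma$ is defined and \'etale (which uses the infinitude of $k$), followed by the Alexander-type conjugation $t\mapsto h_t^{-1}\circ\sigma'\circ h_t$ that degenerates at $t=0$ to a linear map. The differences are cosmetic: the paper arranges the differential at $o$ to be the identity so the limit is $\mathrm{id}$ and verifies the extension across $t=0$ via Taylor series in the completed local ring, whereas you keep $d\sigma'_o\in\GL_n(k)$, argue directly with the fractions $p_i/q_i$, and finish with a linear homotopy inside $\Aff_n(k)$.
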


\begin{proof}[Proof {\rm(}different from the proof in {\rm \cite{BZ18}}{\rm)}]
\

(a) First, ${\rm id}$ and every element $\sigma\in\Aff_n(k)$ are linearly connected,
because $\Aff_n$ is an open subset of the  $(n^2+n)$-dimensional affine space
${\mathscr A}_n$ of all affine maps  $\bAn\to \bAn$, and therefore, as $\varphi$ one can take
the identity map of the set  $U\!:=\!\ell\cap \Aff_n$, where $\ell$ is a line in ${\mathscr A}_n$, containing $\sigma$ and\;${\rm id}$.

(b) Second, every element $\sigma\in {\rm Bir}_k(\bAn)=\Cn(k)$
is of the form $\sigma=\alpha\circ\theta\circ\tau$, where  $\alpha, \tau\in \Aff_n(k)$, and
$\theta=(\theta_1,\ldots, \theta_n)\in \Cn(k)$  possesses the properties:
\begin{enumerate}[\hskip 2.8mm \rm(i)]
\item $\theta$ is defined at $o$;
\item $\theta(o)=o$;
\item $\theta$ is \'etale at
$o$, and
$d_o\theta
\colon  {\rm T}_{o,\bAn}\to {\rm T}_{o,\bAn}$
is the identity map.
\end{enumerate}

Indeed, since the map $\sigma\colon \bAn\dashrightarrow \bAn$ is
$k$-birational, and the field $k$ is infinite, there exists a point  $s\in\bAn(k)$,
at which
$\sigma$ is defined and \'etale  (its existence is equivalent to
the existence  of a point in $\bAn(k)$ that is not zero of some nonzero polynomial from
$k[x_1,\ldots, x_n]$).\;Now, as $\alpha$ and $\tau$ we can take any elements from $\Aff_n(k)$ such that $\tau^{-1}(o)=s$, $\alpha^{-1}(\sigma(s))=o$, and the composition of the maps
\begin{equation*}
{\rm T}_{o,\bAn}\xrightarrow{d_o\tau^{-1}}
{\rm T}_{s,\bAn}\xrightarrow{d_s\sigma}
{\rm T}_{\sigma(s),\bAn}\xrightarrow{d_{\sigma(s)}\alpha^{-1}}
{\rm T}_{o,\bAn}
\end{equation*}
is the identity map\,---\,obviously, such elements exist.

(c) We will now show that ${\rm id}$ and the element $\theta\in \Cr_n(k)$ specified in (b) are linearly connected. Clearly, in view of (a) and (b), this will complete the proof of Theorem \ref{con}.

Let ${\mathscr O}$ and $\widehat{\mathscr O}$\,be respectively the local ring of the variety $\bAn$ at the point  $o$ and its completion with respect to its maximal ideal.\;The set
of functions $x_1,\ldots, x_n$ is a system of local parameters of the variety
$\bAn$ at the point $o$.\;Therefore, we can (and shall) assume that  $\widehat{\mathscr O}={\overline k}[[x_1,\ldots, x_n]]$ and $\mathscr O$\,is the subring of $\widehat{\mathscr O}$ formed by the Taylor series at the point  $o$ of all the functions from $\mathscr O$ with respect to this system of local parameters.\;We have ${\mathscr O}_k:={\mathscr O}\cap k(\bAn)\subset
k[[x_1,\ldots, x_n]]$.

It follows from (i) that
$\theta_i\in {\mathscr O}_k$
for every $i=1,\ldots, n$, so we have
\begin{equation}\label{decomp}
  \theta_i=F_i(x_1,\ldots, x_n)\in k[[x_1,\ldots, x_n]]
\end{equation}
In view of (ii) and (iii), the series $F_i(x_1,\ldots, x_n)$ has the form
\begin{equation}\label{series}
F_i(x_1,\ldots, x_n)=
x_i+\sum_{d\geqslant 2}F_{i,d}(x_1,\ldots, x_n),
\end{equation}
where $F_{i,d}(x_1,\ldots, x_n)$ is a form of degree  $d$ in $x_1,\ldots, x_n$ with the coefficients in
  $k$, so we have
 \begin{equation}\label{homo}
 F_{i,d}(tx_1,\ldots, tx_n)=t^dF_{i,d}(x_1,\ldots, x_n)\quad \mbox{for any $t\in {\overline k}$.}
 \end{equation}
   From \eqref{decomp}, \eqref{series}, \eqref{homo} it follows that, for any $t\in \overline k$, the series
 $$tx_i+\sum_{d\geqslant 2}t^dF_{i,d}(x_1,\ldots, x_n)\!\in \widehat {\mathscr O}$$ lies in
 $\mathscr O$, and for
$t\in k$, it lies in
 $\mathscr O_k$.\;This implies that the series $$x_i+\sum_{d\geqslant 2}t^{d-1}F_{i,d}(x_1,\ldots, x_n)$$ also possesses the same properties.\;Therefore, for every
$t\in \overline k$, we obtain a rational map
 \begin{equation}\label{famil}
 \varrho(t)\colon \bAn\dashrightarrow\bAn, \quad \varrho(t)_i=
 x_i+\textstyle\sum_{d\geqslant 2}t^{d-1}F_{i,d}(x_1,\ldots, x_n),\quad\mbox{$i=1,\ldots, n$}.
 \end{equation}

In reality, $\varrho(t)\in \Cn$ for every $t$.\;Indeed,  \eqref{famil}
yields
\begin{equation}\label{id}
\varrho(0)=(x_1,\ldots, x_n)\overset{\eqref{string}}{=\hskip -1mm=}{\rm id}\in \Cn.
\end{equation}
 If $t\neq 0$ and
 $\vartheta(t):=(tx_1,\ldots, tx_n)\in \GL_n,$
 then from  \eqref{composi}, \eqref{decomp}, \eqref{series}, and \eqref{famil} we obtain
 \begin{equation}\label{conju}
 \vartheta(t^{-1})\circ\theta\circ \vartheta(t)=\varrho(t).
 \end{equation}
Since the left-hand side of the equality \eqref{conju} lies in $\Cn$, the same is true for the right one.

Thus, a mapping
$\varphi\colon \bAl
\to \Cn, t\mapsto \varrho(t)$,
arises.\;In view of \eqref{famil}, it is a $k$-morphism.\;From \eqref{id} and the equality  $\varrho(1)=\theta$ (following from \eqref{famil}, \eqref{series}, \eqref{decomp}) it now
follows that $\theta$ and ${\rm id}$ are
linearly connected.
\end{proof}

\subsection{The case of a finite field \boldmath$k$}

The following examples, belonging to A.\;Borisov \cite{Bo17}, show that
the condition of infinity of $k$  cannot be discarded in the given above proof.

\begin{examples}
Let $k
={\bf F}_q$, а $n=2$.\;Then the birational self-map
$\tau:=(x_1, x_2-1/(x_1^q-x_1))\in \Cp({\bf F}_q)$ is not defined at all points of ${\bf A}\!^2({\bf F}_q)$, and the birational self-map
 $\tau:=((x_1^q-x_1)x_1x_2, (x_1^q-x_1)x_2)\in \Cp({\bf
F}_q)$ is not \'etale at all such points.
\end{examples}

 \end{document}